
\documentclass[11pt]{amsart}
\usepackage{fullpage}
\usepackage[titletoc]{appendix}


\newif\ifHideFoot
\HideFoottrue 
\HideFootfalse 


\usepackage{amssymb,amsmath,amsthm,amscd,mathrsfs,graphicx, color}
\usepackage[color,cmtip,all,matrix,arrow,tips,curve]{xy}
\usepackage[colorlinks=true]{hyperref}
\usepackage{multicol}
\usepackage{tikz}
\usepackage{tikz-cd}


\numberwithin{equation}{section}
\newtheorem{teo}{Theorem}[section]
\newtheorem{pro}[teo]{Proposition}
\newtheorem{lem}[teo]{Lemma}
\newtheorem{cor}[teo]{Corollary}

\newtheorem{teoalpha}{Theorem}

\newtheorem{coralpha}[teoalpha]{Corollary}

\theoremstyle{definition}

\theoremstyle{remark}
\newtheorem{rem}[teo]{Remark}


\newcommand{\marg}[1]{\normalsize{{\color{red}\footnote{{\color{blue}#1}}}{\marginpar[{\vskip
    -.3 cm\color{red}\hfill\tiny\thefootnote$\rightarrow$}]{{\vskip -.3 cm
     \color{red}$\leftarrow$\tiny\thefootnote}}}}}
\newcommand{\Yano}[1]{\marg{(Yano) #1}}
\newcommand{\Jeff}[1]{\marg{(Jeff) #1}}
\newcommand{\Charles}[1]{\marg{(Charles) #1}}

\ifHideFoot
\renewcommand{\Yano}[1]{}
\renewcommand{\Jeff}[1]{}
\renewcommand{\Charles}[1]{}
\fi


\DeclareMathOperator{\coniveau}{N}

\global\let\ker\undefined
\DeclareMathOperator{\ker}{ker}

\newcommand{\til}[1]{{\widetilde{#1}}}

\def\mmu{{\pmb\mu}}
\def\one{{\mathbf 1}}

\def\cross{\times}
\def\inv{^{-1}}

\def\cx{{\mathbb C}}

\def\gp{{\mathbb G}}
\def\rat{{\mathbb Q}}
\def\integ{{\mathbb Z}}

\def\iso{\cong}

\renewcommand{\bar}[1]{{\overline{#1}}}

\DeclareMathOperator{\alb}{Alb}

\DeclareMathOperator{\gal}{Gal}

\DeclareMathOperator{\pic}{Pic}

\DeclareMathOperator{\A}{A}
\DeclareMathOperator{\aut}{Aut}

\DeclareMathOperator{\chow}{CH}

\def\ra{\rightarrow}
\def\tensor{\otimes}
\newcommand{\st}[1]{\left\{#1\right\}}

\newcommand{\LKtrace}[1]{\underline{\underline{#1}}}

\newenvironment{alphabetize}{\begin{enumerate}
  
 }{\end{enumerate}}


\begin{document}
 

\title{Distinguished models of intermediate Jacobians}

 \author{Jeffrey D. Achter}
 \address{Colorado State University, Department of Mathematics,
  Fort Collins, CO 80523,
  USA}
 \email{j.achter@colostate.edu}
 
 \author{Sebastian Casalaina-Martin }
 \address{University of Colorado, Department of Mathematics, 
  Boulder, CO 80309, USA }
 \email{casa@math.colorado.edu}
 
 \author{Charles Vial}
 \address{Universit\"at Bielefeld, Fakult\"at f\"ur Mathematik, Postfach 100131, D-33501, Germany}
 \email{vial@math.uni-bielefeld.de}

 \thanks{The first author was partially supported by  grants from the
  NSA (H98230-14-1-0161, 
  H98230-15-1-0247 and H98230-16-1-0046).  The second author
  was partially supported by  a Simons Foundation
  Collaboration Grant for Mathematicians
  (317572) and NSA grant H98230-16-1-0053.  The third author was supported by 
  EPSRC Early Career Fellowship
  EP/K005545/1.}
 
 \date{\today}
 
 \maketitle
 
 %
 
 \vspace{-17pt}
 \begin{abstract}
We show that the image of the Abel--Jacobi
   map admits functorially a model over the field of definition, with the property
   that the Abel--Jacobi map is equivariant with respect to this
   model.   
    The cohomology of this abelian variety over the base field
   is isomorphic as a Galois representation to the deepest part of the
   coniveau filtration of the cohomology of the projective variety.
   Moreover, we show that this model over the base field is dominated
   by the Albanese variety of a product of components of the Hilbert
   scheme of the projective variety, and thus we answer a question of
   Mazur.  We also recover a result of Deligne on complete
   intersections of Hodge level~one.
 \end{abstract}
 
 
 \vspace{14pt}

  Let $X$ be a smooth projective variety defined over the complex numbers. Given a nonnegative integer $n$, denote $\operatorname{CH}^{n+1}(X)$ the Chow group of codimension-$(n+1)$ cycle classes on $X$, and denote $\operatorname{CH}^{n+1}(X)_{\mathrm{hom}}$ the kernel of the cycle class map $\operatorname{CH}^{n+1}(X) \to H^{2({n+1})}(X,\mathbb Z({n+1})).$ 
  In the seminal paper \cite{griffiths}, Griffiths defined a complex torus, the \emph{intermediate Jacobian},
   $J^{2n+1}(X)$ together with an \emph{Abel--Jacobi map} $$AJ : \operatorname{CH}^{n+1}(X)_{\mathrm{hom}} \to J^{2n+1}(X).$$
  While $J^{2n+1}(X)$ and the Abel--Jacobi map are transcendental in nature, the image of the Abel--Jacobi map restricted to   $\operatorname{A}^{{n+1}}(X)$, the sub-group of $\operatorname{CH}^{n+1}(X)$ consisting of algebraically trivial
  cycle classes, is a complex sub-torus $J^{2n+1}_a(X)$ of $J^{2n+1}(X)$ that is naturally endowed via the Hodge bilinear relations with a polarization, and hence is a complex abelian variety. The first cohomology group of $J^{2n+1}_a(X)$ is  naturally identified via the polarization with $\coniveau^nH^{2n+1}(X,\rat(n))$\,; i.e., the $n$-th Tate twist of the  $n$-th step in the
  geometric coniveau filtration (see \eqref{E:coniveau}). 
\medskip
  
  If now $X$ is a smooth projective variety defined over a sub-field $K \subseteq \cx$, it is natural to ask whether the complex abelian variety $J^{2n+1}_a(X_\cx)$ admits a model over $K$. In this paper, we prove  that $J^{2n+1}_a(X_\cx)$ admits a unique model over $K$ such that  $$AJ : \operatorname{A}^{n+1}(X_\cx) \to J^{2n+1}_a(X_\cx)$$ is $\operatorname{Aut}(\cx/K)$-equivariant, thereby generalizing the well-known cases of the Albanese map \linebreak $\operatorname{A}^{\dim X}(X_\cx) \to \operatorname{Alb}_{X_\cx}$ and of the Picard map $\operatorname{A}^{1}(X_\cx) \to \operatorname{Pic}_{X_{\mathbb C}}^0$, as well as the case of $AJ : \operatorname{A}^{2}(X_\cx) \to J^{3}_a(X_\cx)$ which was treated in our previous work \cite{ACMVdcg}.

  \begin{teoalpha}\label{T:main}
 Suppose $X$ is a smooth projective variety over a field $K\subseteq \cx$, and
 let $n$ be a nonnegative integer. Then $J^{2n+1}_a(X_\cx)$,  the complex abelian
 variety that is the image of the Abel--Jacobi map $AJ :
 \operatorname{A}^{n+1}(X_\cx) \to J^{2n+1}(X_\cx)$,  
 admits a distinguished model $J$ over $K$ such that the Abel--Jacobi map is 
 $\operatorname{Aut}(\cx/K)$-equivariant.
 Moreover, there is an algebraic
 correspondence $\Gamma \in \operatorname{CH}^{\dim (J)+n}(J\times_K
 X)$  inducing, for every prime number $\ell$,  a split inclusion of
 $\operatorname{Gal}(K)$-representations 
 \begin{equation}\label{E:Tmain-1}
  \Gamma_* : H^1(J_{\bar K},\rat_\ell) \hookrightarrow H^{2n+1}(X_{\bar
  	K},\rat_\ell(n))
 \end{equation}
 with image $\coniveau^nH^{2n+1}(X_{\bar K},\rat_\ell(n))$. 
  \end{teoalpha}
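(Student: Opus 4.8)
\emph{Overview of the plan.}
I would realize $J^{2n+1}_a(X_\cx)$ as a quotient, compatible with $AJ$, of the Albanese variety of a product of components of $\hilb(X)$ --- an abelian variety visibly defined over $K$ --- and then descend the kernel of this quotient map. It is convenient to replace high-dimensional Hilbert components by general complete-intersection curves inside them, so that the Albaneses in play are Jacobians, carrying canonical principal polarizations over their field of definition; this keeps the relevant polarizations and orthogonal complements rational over $K$.

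\emph{Step 1: a $K$-rational surjection and a $K$-rational correspondence.}
The scheme $\hilb(X)$ is defined over $K$, and the geometric connected components of each finite-type piece $\hilb^P(X)$ are already defined over $\bar K$. Any algebraically trivial class in $\A^{n+1}(X_\cx)$ is built from finitely many families of subschemes over smooth projective curves mapping to components of $\hilb(X)_\cx$; by Griffiths' regularity each such family induces a morphism of complex varieties on its base curve, hence, after a base point, a homomorphism from its Jacobian to $J^{2n+1}(X_\cx)$ with image in $J^{2n+1}_a(X_\cx)$. Since $J^{2n+1}_a(X_\cx)$ is finite-dimensional, finitely many smooth projective curves $C_1,\dots,C_r$ --- taken as general complete-intersection curves inside components of $\hilb(X)_\cx$, which may be assumed to form a $\gal(K)$-stable collection defined over a finite Galois extension $K'/K$ permuted by $\gal(K'/K)$ --- give a surjection $\phi_\cx\colon A_\cx\twoheadrightarrow J^{2n+1}_a(X_\cx)$, where $A:=\prod_i\alb(C_i)$ is an abelian variety over $K$ (a Weil descent along the permutation action of $\gal(K'/K)$), equipped with a canonical $K$-rational principal polarization. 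Pulling the universal families back to the $C_i$ gives cycles $Z_i\in\chow^{n+1}(C_i\times X)$; pushing these forward along the Albanese embeddings $C_i\hookrightarrow\alb(C_i)$ and assembling over the factors of $A$ produces a cycle $\Gamma_A\in\chow^{\dim A+n}(A\times_K X)$, defined over $K$, whose action $\Gamma_{A*}\colon H^1(A_{\bar K},\rat_\ell)\to H^{2n+1}(X_{\bar K},\rat_\ell(n))$ is $\gal(K)$-equivariant, has kernel a direct summand, and --- after base change to $\cx$, via the Betti--\'etale comparison, using that $\Gamma_{A*}$ is the cohomological avatar of $\phi_\cx$ and that geometric coniveau is insensitive to the extension $\bar K\subseteq\cx$ --- has image $\coniveau^nH^{2n+1}(X_{\bar K},\rat_\ell(n))\cong H^1(J^{2n+1}_a(X_\cx),\rat_\ell)$ and kernel the realization of $B_\cx:=\ker(\phi_\cx)^0$.

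\emph{Step 2: descent of the kernel, the model $J$, and the correspondence $\Gamma$.}
Since $\End(A_\cx)=\End(A_{\bar K})$, the abelian subvariety $B_\cx\subseteq A_\cx$ descends to $B_{\bar K}\subseteq A_{\bar K}$, and $B_{\bar K}$ is determined by its $\rat_\ell$-realization, which by Step 1 is $\ker(\Gamma_{A*})$ and is therefore $\gal(K)$-stable because $\Gamma_A$ is defined over $K$; hence $B_{\bar K}$ is $\gal(K)$-stable, so it descends to an abelian subvariety $B\subseteq A$ over $K$, and we set $J:=A/B$, an abelian variety over $K$ with a canonical isomorphism $J_\cx\cong J^{2n+1}_a(X_\cx)$ induced by $\phi_\cx$. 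Using the $K$-rational polarization of $A$, the orthogonal complement $N':=B^{\perp}\subseteq A$ is defined over $K$ and $N'\to J$ is a $K$-isogeny; composing $\Gamma_A$ with the resulting $K$-rational splitting $J\to A$ up to isogeny yields $\Gamma\in\chow^{\dim J+n}(J\times_K X)$ with $\Gamma_*=\Gamma_{A*}|_{H^1(N'_{\bar K},\rat_\ell)}$, a $\gal(K)$-equivariant injection with image $\coniveau^nH^{2n+1}(X_{\bar K},\rat_\ell(n))$. The inclusion is split as $\gal(K)$-representations: from the $K$-rational polarization of $A$, the Lefschetz correspondence attached to a $K$-ample class on $X$, and the transpose of $\Gamma_A$, one builds a $K$-rational correspondence from $X$ to $J$ whose composite with $\Gamma$ is an endomorphism of $J$ that is an isogeny --- an algebraic statement verified over $\cx$ from the Hodge--Riemann bilinear relations that define the polarization of $J^{2n+1}_a(X_\cx)$ --- and inverting it up to isogeny gives a $\gal(K)$-equivariant retraction of $\Gamma_*$.

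\emph{Step 3: equivariance, uniqueness, and the main difficulty.}
One first checks that the $K$-structure $J$ is independent of all choices: any two choices of curves and components are dominated by a common one, and the descended quotients of the corresponding Albaneses agree compatibly; this is what makes $X\mapsto J$ functorial. For the $\aut(\cx/K)$-equivariance of $AJ\colon\A^{n+1}(X_\cx)\to J(\cx)$, given $\sigma\in\aut(\cx/K)$ and $z\in\A^{n+1}(X_\cx)$, enlarge the chosen collection of components so as to capture both $z$ and $\sigma_*z$; realizing $z$ as a difference of fibres of one of the resulting $K'$-families and using that $AJ$ is then computed by the $K$-morphism $A\to J$ while $\sigma$ permutes fibres according to the $\gal(K)$-action on components, one gets $\sigma(AJ(z))=AJ(\sigma_*z)$, with independence of choices ensuring this is unambiguous; uniqueness of $J$ among $K$-models making $AJ$ equivariant is then formal. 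The main obstacle is reconciling the transcendental objects $J^{2n+1}$ and $AJ$ with the arithmetic input: the whole construction must be organized so that the complex abelian subvariety $B_\cx$ is manifestly cut out, on $\rat_\ell$-cohomology, by the $K$-rational correspondence $\Gamma_A$ (so that, together with the fact that an abelian subvariety is determined by its $\rat_\ell$-realization, it descends), and so that Abel--Jacobi equivariance holds on \emph{all} of $\A^{n+1}(X_\cx)$ rather than merely on the subgroup generated by a single fixed family of cycles --- here the functoriality/canonicity of the construction is indispensable. A secondary delicate point is the compatibility of geometric coniveau and the Betti--\'etale comparison with the extension $\bar K\subseteq\cx$, needed to identify the image of $\Gamma_*$ over $\bar K$.
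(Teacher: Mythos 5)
Your high-level strategy --- realize $J^{2n+1}_a(X_\cx)$ as a quotient of a $K$-rational abelian variety carrying a $K$-rational correspondence to $X$, then descend the kernel --- matches the paper's, and the observation that an abelian subvariety of $A_{\bar K}$ is determined by (and descends along the $\gal(K)$-stability of) its $\rat_\ell$-realization is a perfectly good substitute for the paper's torsion-point descent (Lemma~\ref{L:torsdescends}). But there are two genuine gaps, and they sit precisely at the two points the paper identifies as the real difficulties.

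\emph{Equivariance of $AJ$.} Your argument for $\aut(\cx/K)$-equivariance appeals to ``independence of choices'' of the family: given $z$ and $\sigma$, enlarge the family of curves, say from $A$ to $A'$, and compute $AJ$ via the descended $K$-morphism $A'\to J'$. But the $K$-structure on $J$ is determined by a $\gal(K)$-action on torsion points, and you have not shown that the action coming from $A'$ agrees with the one coming from $A$. This is circular without an intrinsic description of that action --- and the paper's key technical input here, which you do not use, is precisely that the action is intrinsic: Bloch's map $\lambda^{n+1}\colon\A^{n+1}(X_{\bar K})[N]\to H^{2n+1}_{\et}(X_{\bar K},\mmu_N^{\otimes(n+1)})$ is canonically $\gal(K)$-equivariant and factors through $AJ$ on torsion (\cite[Prop.~3.7]{bloch79}, with Lecomte rigidity to pass to $\bar K$). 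That is what pins down the $\gal(K)$-action on $J[N]$ independently of any auxiliary family, and it is the content of Step~3 of the proof of Theorem~\ref{T:JacDesc}. Passing from torsion to all cycle classes is a second non-trivial step, handled by Propositions~\ref{P:algcycles} and~\ref{P:Tors-Gen}, which again your argument does not supply; ``canonicity'' does not by itself close the diagram chase.

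\emph{The splitting.} You propose to produce a $\gal(K)$-equivariant retraction of $\Gamma_*$ by composing the transpose of $\Gamma_A$ with a Lefschetz correspondence on $X$ and the inverse polarization on $J$, claiming the resulting self-map of $J$ is an isogeny by the Hodge--Riemann relations. This is exactly the argument of \cite[Thm.~A]{ACMVdcg}, and it works for $n=1$ but not in general: the cup-product pairing $\int_X \alpha\cup L_X^{\dim X-2n-1}\beta$ on $H^{2n+1}(X)$ is \emph{not} a polarization, because the Hodge--Riemann relations give signs $(-1)^r$ on the Lefschetz pieces $L^rP^{2n+1-2r}(X)$. A sub-Hodge structure such as $\coniveau^nH^{2n+1}(X)$ may meet several Lefschetz pieces, and the restriction of this form to it can be degenerate, so your composite need not be an isogeny. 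Repairing the signs requires the Lefschetz $*$-operator (equivalently, inverting $L$ on $H^{2n+1}(X)$), and for $2n+1>1$ the algebraicity of this is the open Lefschetz Standard Conjecture. This is why the paper turns to Andr\'e's motivated cycles in \S\ref{S:Andre} --- in that category $*$ is motivated by definition --- and explicitly remarks after Theorem~\ref{T:mot} that an algebraic splitting is known only when $n=1$. As written, your Step~2 claims more than is known.

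A smaller point: you set $B_\cx=\ker(\phi_\cx)^0$ and $J:=A/B$. If the kernel has a nontrivial component group, $J_\cx$ is then only isogenous to $J^{2n+1}_a(X_\cx)$, not isomorphic to it. Your $V_\ell$-argument, correct for abelian subvarieties, does not see the finite part; you would need to descend the full kernel, as the paper does with torsion points.
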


By Chow's rigidity theorem (see \cite[Thm.~3.19]{conradtrace}), an abelian variety $A/\mathbb C$ descends to  at most one model  defined over $\bar K$.  
On the other hand, an abelian variety $A/\bar K$ may descend to  more than one model   defined over $K$.  Nevertheless,  since $AJ:\operatorname{A}^{n+1}(X_{\mathbb C})\to
J_a^{2n+1}(X_{\mathbb C})$ is surjective, 
the abelian variety
$J_a^{2n+1}(X_{\mathbb C})$ admits at most one  structure of a scheme over $K$
such that $AJ$ is 
$\operatorname{Aut}(\cx/K)$-equivariant.  This is
the sense in which  $J_a^{2n+1}(X_{\mathbb C})$ admits a \emph{distinguished model}
over~$K$.  
\medskip

Our proof of Theorem \ref{T:main} uses a different strategy than we took in  \cite{ACMVdcg}, and as a result improves on the results of that paper in three ways\,:

1.~In \cite[Thm.~B]{ACMVdcg}, only the case $n=1$ of Theorem \ref{T:main} was treated.
An essential step in the proof in \cite[Thm.~B]{ACMVdcg} was a result of Murre \cite[Thm.~C]{murre83}, relying on the theorem of Merkurjev and Suslin, asserting that $J^3_a(X_\cx)$ is an \emph{algebraic representative}, meaning that it is universal among regular homomorphisms from $\operatorname{A}^2(X_\cx)$ (as defined in \S \ref{S:Tors-Gen}). 
In general, little is known about when higher intermediate Jacobians
are algebraic representatives, or even when algebraic representatives
exist. In this paper we  completely avoid the use of Murre's result,
or indeed the existence of an algebraic representative. Instead, we
use a new approach to show that for each $n$ there is a  model of
$J^{2n+1}_a(X_\cx)$ over $K$ which  makes the Abel--Jacobi map
$\operatorname{Aut}(\mathbb C/K)$-equivariant.  

2.~The results of \cite[Thm.~A]{ACMVdcg} concerning descent for $J^{2n+1}_a(X_\cx)$ for $n>1$ only show that the \emph{isogeny class} of $J^{2n+1}_a(X_\cx)$ descends to $K$, and this is  under the further restrictive assumption that the Abel--Jacobi map be surjective (or under some other constraint on the motive of $X$\,; see \cite[Thm.~2.1]{ACMVdcg}). In contrast, the present Theorem \ref{T:main} 
provides a \emph{distinguished model} of $J^{2n+1}_a(X_\cx)$ over $K$,
without any additional hypothesis.  Moreover, we show the assignment in Theorem
\ref{T:main} is functorial (Proposition \ref{P:functoriality}).  The
new technical input begins with Proposition \ref{P:niveauK}, which
shows that $J^{2n+1}_a(X_\cx)$ is dominated, via the induced action of
a correspondence defined over $K$, by the Jacobian of a pointed,
geometrically integral, smooth projective curve $C$ defined over
$K$, strengthening \cite[Prop.~1.3]{ACMVdcg}.  The key point is that  this strengthening,  together with the fact that
Bloch's map \cite{bloch79} factors through the Abel--Jacobi map on
torsion, makes it possible to show directly that $J^{2n+1}_a(X_\cx)$ admits a unique model over $K$ making the
Abel--Jacobi map $AJ : \operatorname{A}^{n+1}(X_\cx) \to
J^{2n+1}_a(X_\cx)$ Galois equivariant on torsion.  In short, avoiding the use of algebraic representatives, and the motivic techniques  employed in \cite{ACMVdcg}, we obtain a stronger result.
  We then make a
careful analysis of Galois equivariance for regular homomorphisms,
strengthening some statements in \cite{ACMVdcg}, to conclude that the
Abel--Jacobi map is Galois equivariant on all points -- and not merely on torsion points (Proposition
\ref{P:Tors-Gen})\,; this is crucial to the proof of Theorem \ref{Ta:MazQ1} below.
 
 3.~Finally, while a splitting in \cite[Thm.~A]{ACMVdcg} analogous to  \eqref{E:Tmain-1}  was established by some explicit computations involving correspondences, here we utilize  Andr\'e's powerful theory of \emph{motivated cycles} \cite{AndreIHES} in order to establish the more general splitting \eqref{E:Tmain-1}.   This also  provides a proof that the coniveau filtration splits (Corollary \ref{C:CNFSplit}), as well as a short motivic  proof that  the isogeny class of $J^{2n+1}_a(X_\cx)$ descends, without any of the restrictive hypotheses in \cite{ACMVdcg}.  
  \medskip

The structure of the proof of Theorem \ref{T:main} is broken into three parts.  First we give a proof of
Theorem \ref{T:main}, up to the statement of the splitting of the
inclusion, and where we focus only on the
$\operatorname{Aut}(\cx/K)$-equivariance of the Abel--Jacobi map on
torsion (Theorem \ref{T:JacDesc}). The proof of Theorem~\ref{T:JacDesc} relies on showing that
$\coniveau^nH^{2n+1}(X_{\bar K},\rat_\ell(n))$ is spanned via the
action of a correspondence over $K$ by the first cohomology group of
a pointed, geometrically integral curve\,; this is proved in
Proposition \ref{P:niveauK}.  Next, in \S\ref{S:Tors-Gen}, we show
that if the Abel--Jacobi map is
$\operatorname{Aut}(\mathbb C/K)$-equivariant on torsion, then it is
fully equivariant.  This is a consequence of more general results we
establish for surjective regular homomorphisms.  Finally, the
splitting of \eqref{E:Tmain-1} is then proved in Theorem \ref{T:mot}.
 Note 
that when $n=1$ the result of \cite[Thm.~A]{ACMVdcg} is more precise in
that the splitting of \eqref{E:Tmain-1} is shown to be induced by an
algebraic correspondence over $K$.
\medskip

  As a first application of Theorem \ref{T:main}, we recover a result of Deligne \cite{deligneniveau} regarding
  intermediate Jacobians of complete intersections of Hodge level $1$ (\S
  \ref{S:Deligne}).  

\medskip

Another application  is to the following question due to Barry Mazur.
  Given an effective  polarizable weight-$1$ $\mathbb Q$-Hodge structure $V$, there is a complex 
  abelian variety  $A$ (determined up to isogeny) so that  $H^1(A,\mathbb
  Q)$ gives a weight-$1$ $\mathbb Q$-Hodge structure isomorphic to $V$.  
  On the other hand, 
  let $K$ be a field, and let $\ell$ be a prime number (not equal to the
  characteristic of the field).
  It is not known (even for $K=\mathbb Q$) whether  given  an effective
  polarizable weight-$1$
  $\operatorname{Gal}(K)$-representation $V_\ell$ over $ \mathbb Q_\ell$,  there
  is an abelian variety $A/K$ such that $H^1(A_{\bar K},\mathbb Q_\ell)$
  is isomorphic to $V_\ell$.    
  A \emph{phantom abelian variety for $V_\ell$} is an abelian variety $A/K$
  together with an isomorphism of $\operatorname{Gal}(K)$-representations 
  $$
  \xymatrix{
 H^1(A_{\bar K},\mathbb Q_\ell)\ar[r]^<>(0.5){\cong} & V_\ell.
  }
  $$
  Such an abelian variety, if it exists, is determined up to isogeny\,; this is
  called the \emph{phantom isogeny class for $V_\ell$}.  
  Mazur asks the following question 
\cite[p.38]{mazurprobICCM} \,:
\noindent\emph{Let  $X$ be a smooth projective variety over a field $K\subseteq \cx$, and let
 $n$ be a nonnegative integer.  If $H^{2n+1}(X_{\mathbb C},\mathbb Q)$ has Hodge
 coniveau $n$ (i.e., $H^{2n+1}(X_{\mathbb C},\mathbb
 C)=H^{n,n+1}(X) \oplus H^{n+1,n}(X)$), does there exist a phantom abelian
 variety for $H^{2n+1}(X_{\bar K},\mathbb Q_\ell(n))$?}

 Theorem \ref{T:main}
  answers this affirmatively  under the stronger, but according to the generalized
  Hodge conjecture equivalent, assumption that the Abel--Jacobi map  $AJ :
  \operatorname{A}^{n+1}(X_\cx) \to J^{2n+1}(X_\cx)$ is surjective. This assumption
  is known to hold in many cases (e.g.,
  uniruled threefolds).    Theorem~\ref{T:main} in fact shows a  stronger statement, namely that the \emph{distinguished}
  model over $K$ of  the image of the Abel--Jacobi map  $AJ :
  \operatorname{A}^{n+1}(X_\cx) \to J^{2n+1}(X_\cx)$ provides a phantom abelian variety for the 
  $\operatorname{Gal}(K)$-representation $\coniveau^nH^{2n+1}(X_{\bar
 K},\rat_\ell(n))$.  
  Moreover, the
  arguments via motivated cycles of Section \ref{S:Andre} give a second proof of the  existence of a
  phantom abelian variety, although not the descent of the image of the
  Abel--Jacobi map.    In summary, these results  strengthen our answer to Mazur's question, given in \cite{ACMVdcg}.

  \vskip .2 cm 
  
  As another application, we  provide an answer to a second  question of Mazur, which was not addressed in \cite{ACMVdcg}.  Over the complex numbers the image of the Abel--Jacobi map is dominated by
  Albaneses of resolutions of singularities of products of irreducible components
  of Hilbert  
  schemes.  
  Since Hilbert schemes are functorial, and in particular  defined over $K$, and
  since the image of the Abel--Jacobi map descends to $K$, one might expect the
  phantom abelian variety to be linked to the  Albanese of a Hilbert  scheme.
  Motivated by concrete examples where this holds (e.g., the intermediate Jacobian
  of a smooth cubic threefold $X$ is the Albanese variety of the Fano variety of
  lines on $X$ \cite{CG}),
  Mazur asks the following
  question
\cite[Que.~1]{mazurprobICCM}\,:
\emph{Can this phantom abelian variety  be constructed as -- or at least in terms of
 -- the Albanese variety of some Hilbert scheme geometrically attached to $X$?}
  We provide an affirmative answer    for 
  $\coniveau^nH^{2n+1}(X_{\bar K},\rat_\ell(n))$\,:
  
  \begin{teoalpha}\label{Ta:MazQ1}
 Suppose $X$ is a smooth projective variety over a field $K\subseteq \cx$.  Then
 the phantom abelian variety $J/K$ for $\coniveau^nH^{2n+1}(X_{\bar
 	K},\rat_\ell(n))$ given in Theorem \ref{T:main} is dominated by the Albanese
 variety of  (a finite product of resolutions of singularities   of some  finite
 number of)
 components of a Hilbert scheme parameterizing codimension-$(n+1)$ subschemes of 
 $X$ over
 $K$. 
  \end{teoalpha}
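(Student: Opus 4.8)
The plan is to descend to $K$ the classical fact, recalled in the introduction, that over $\cx$ the image $J^{2n+1}_a(X_\cx)$ of the Abel--Jacobi map is dominated by the Albanese variety of a finite product of resolutions of components of the Hilbert scheme of $X_\cx$; the input that makes the domination descend is the $\operatorname{Aut}(\cx/K)$-equivariance of $AJ$ furnished by Theorem~\ref{T:main}. In more detail: every class in $\A^{n+1}(X_\cx)$ is a $\integ$-linear combination of differences $[Z_t]-[Z_{t'}]$ of members of a flat family of effective codimension-$(n+1)$ subschemes of $X_\cx$ parameterized by (a component of) $\hilb(X_\cx)$; passing to a resolution $\til W\to W$, such a family yields a morphism of complex varieties $\til W\times\til W\to J^{2n+1}_a(X_\cx)$, $(w,w')\mapsto AJ([Z_w]-[Z_{w'}])$, which factors through $\alb_{\til W}\times\alb_{\til W}$ since the target is an abelian variety. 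As $J^{2n+1}_a(X_\cx)$ is a fixed abelian variety and the images of these maps form an increasing family of abelian subvarieties, finitely many components $W_1,\dots,W_r$ of $\hilb(X_\cx)$ already give a jointly surjective family of such maps.

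Now $\hilb(X/K)$ is a $K$-scheme with $\hilb(X/K)\times_K\cx=\hilb(X_\cx)$, and each of its strata of fixed Hilbert polynomial is projective over $K$, hence of finite type; therefore $\operatorname{Aut}(\cx/K)$ permutes the (finitely many per stratum) irreducible components of $\hilb(X_\cx)$ with finite orbits. Enlarging $\{W_1,\dots,W_r\}$ to a finite $\operatorname{Aut}(\cx/K)$-stable set of components, the associated reduced closed subscheme of $\hilb(X_\cx)$ is $\operatorname{Aut}(\cx/K)$-stable, hence by the standard criterion that an $\operatorname{Aut}(\cx/K)$-stable subvariety of a $K$-variety descends to $K$ it is defined over $K$; call it $\mathcal W\subseteq\hilb(X/K)$, a projective $K$-scheme. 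Because $K$ has characteristic $0$, a resolution of singularities $\til{\mathcal W}\to\mathcal W$ with $\til{\mathcal W}$ smooth projective over $K$ exists, and $\til{\mathcal W}_\cx$ is the disjoint union of the resolutions of the irreducible components of $\mathcal W_\cx$, so in particular it contains $\til W_1,\dots,\til W_r$ among its connected components. Hence $\alb_{\til{\mathcal W}/K}$ is an abelian variety over $K$ whose base change to $\cx$ is the product of the $\alb_{\til W}$'s, and therefore surjects onto $J^{2n+1}_a(X_\cx)$.

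It remains to descend the domination morphism to $K$, and here Theorem~\ref{T:main} enters. Let $J/K$ be the distinguished model and let $\mathcal Z\subseteq\til{\mathcal W}\times_K X$ be the pullback of the universal subscheme. On $\cx$-points, the assignment $(w,w')\mapsto AJ([\mathcal Z_w]-[\mathcal Z_{w'}])$ is defined on the locus where $[\mathcal Z_w]$ and $[\mathcal Z_{w'}]$ are algebraically equivalent; this locus is an $\operatorname{Aut}(\cx/K)$-stable union of connected components of $\til{\mathcal W}\times_K\til{\mathcal W}$, hence of the form $\mathcal U_\cx$ for an open-and-closed $K$-subscheme $\mathcal U$, and on it one gets a morphism $\delta_\cx\colon\mathcal U_\cx\to J_\cx$ (the relevant families of cycles being flat, after a further blow-up on $\til{\mathcal W}$ if necessary). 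It is $\operatorname{Aut}(\cx/K)$-equivariant: for $\sigma\in\operatorname{Aut}(\cx/K)$ the subscheme $\mathcal Z_{w^\sigma}$ is the $\sigma$-conjugate of $\mathcal Z_w$, so $[\mathcal Z_{w^\sigma}]-[\mathcal Z_{(w')^\sigma}]=\sigma\bigl([\mathcal Z_w]-[\mathcal Z_{w'}]\bigr)$ in $\A^{n+1}(X_\cx)$, and $AJ$ is $\operatorname{Aut}(\cx/K)$-equivariant by Theorem~\ref{T:main}. Applying the descent criterion to the graph of $\delta_\cx$ shows $\delta_\cx$ is the base change of a $K$-morphism $\delta\colon\mathcal U\to J$. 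The relations $\delta(w,w)=0$ and $\delta(w,w'')=\delta(w,w')+\delta(w',w'')$, valid over $\cx$ hence over $K$, together with the universal property of the Albanese variety in the form valid for a smooth projective $K$-scheme without a rational point, imply that $\delta$ factors through a $K$-homomorphism $\alb_{\til{\mathcal W}/K}\to J$; its surjectivity may be checked after base change to $\cx$. (Alternatively, one may pass to a finite extension $K'/K$ over which each component acquires a rational point, descend each individual Albanese morphism there, and use basepoint-independence of the induced homomorphism on Albanese varieties to come back down to $K$.)

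I expect the last step to be the main obstacle. The subtle point is that the individual Albanese-type morphisms $\til W_i\to J$ depend on a choice of basepoint and are therefore only $\operatorname{Aut}(\cx/K)$-equivariant up to translation, so one must instead descend the genuinely basepoint-free difference morphism $\delta$; the equivariance of the transcendental map $AJ$ supplied by Theorem~\ref{T:main} is precisely what licenses this descent. The remaining points --- choosing the finite set of Hilbert-scheme components in an $\operatorname{Aut}(\cx/K)$-stable way without destroying surjectivity, and arranging flatness of the families on the resolution so that $\delta_\cx$ is an honest morphism --- are routine.
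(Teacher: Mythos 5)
Your approach is genuinely different from the paper's. You start from the classical fact over $\cx$ --- that $J^{2n+1}_a(X_\cx)$ is dominated by Albaneses of (resolutions of products of) Hilbert-scheme components --- make the relevant finite set of components $\operatorname{Aut}(\cx/K)$-stable, descend the difference map using the equivariance from Theorem~\ref{T:main}, and factor through the Albanese. The paper instead proves a more general intermediate result (Theorem~\ref{T:MazQ1}), valid for any surjective Galois-equivariant regular homomorphism over an arbitrary \emph{perfect} field, and builds the domination from scratch over $K$: it invokes \cite[Lem.~4.9(d)]{ACMVdcg} to produce a \emph{single} cycle $Z$ on $A\times_K X$ with $\phi\circ w_{Z}=r\cdot\mathrm{Id}$, restricts $Z$ to a well-chosen curve $C\hookrightarrow A$, decomposes $Z$ into integral flat components each of which defines a map to a Hilbert-scheme component, and descends the associated difference map with Lemma~\ref{L:DiffDesc} before passing to Albanese \emph{torsors} and a duality argument to extract a surjection of abelian varieties. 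The paper's route buys a statement in arbitrary characteristic, using alterations rather than resolutions, and hence also Corollary~\ref{C:ChowAlgRep}; your route is more top-down and crisply isolates where the equivariance of $AJ$ enters, but is confined to $K\subseteq\cx$ and relies on the unproved classical fact that $\A^{n+1}(X_\cx)$ is represented by differences of members of flat Hilbert-scheme families.

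Two points in your write-up deserve more care, although neither is fatal. First, you use finitely many components $\til W_i$ so that the images of the maps $(w,w')\mapsto AJ([\mathcal Z_w]-[\mathcal Z_{w'}])$ jointly span $J$; however, for $w\in \til W_i$ and $w'\in \til W_j$ with $i\ne j$ the image of the map is a \emph{coset}, not a subgroup, so what is actually increasing (and eventually stabilizes by Noetherianity) is the abelian subvariety generated by the \emph{diagonal-block} images $\delta(\til W_i\times\til W_i)$; one should phrase the finiteness claim accordingly (as you indeed may, since these diagonal images already span $J$, as $AJ([\mathcal Z_w]-[\mathcal Z_{w'}])=\delta_i(\operatorname{alb}(w)-\operatorname{alb}(w'))$ when $w,w'$ lie in the same component). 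Second, the factorization of $\delta:\mathcal U\to J$ through a $K$-homomorphism $\alb_{\til{\mathcal W}/K}\to J$ is the real pressure point: $\delta$ is defined only on the open-and-closed $\mathcal U$ rather than on all of $\til{\mathcal W}\times_K\til{\mathcal W}$, the components $\til W_i$ need not individually be defined over $K$, and the Albanese of a geometrically disconnected $K$-scheme is not simply a product. The paper handles precisely this difficulty by working with the Albanese torsor $\alb^1_{\til H/K}$ and the universal property for morphisms to abelian torsors, then extracting a surjection of abelian varieties via $\pic^0$-duality; you gesture at the same resolution in your final parenthetical, and either route can be made rigorous, but as written the factorization step is incomplete. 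Finally, the remark that a "further blow-up on $\til{\mathcal W}$" may be needed for flatness is unnecessary: the universal subscheme is flat over the Hilbert scheme, so its pull-back along any morphism $\til{\mathcal W}\to\hilb(X/K)$ is automatically flat.
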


  The proof of the theorem, given in \S \ref{S:Hilb} (Theorem \ref{T:MazQ1}), uses in an essential way the 
  $\operatorname{Aut}(\cx/K)$-equivariance of the Abel--Jacobi map as stated in Theorem \ref{T:main}. For the sake of generality, the proof  is
  framed in the language  of Galois-equivariant regular homomorphisms, as
  described  in \cite[\S 4]{ACMVdcg}.    As a consequence, some related results
  are obtained for algebraic representatives of smooth projective varieties over
  perfect fields of arbitrary characteristic.
\medskip

  For concreteness, we mention the following consequence of  Theorems \ref{T:main} and 
  \ref{Ta:MazQ1}, providing a complete answer to Mazur's questions for uniruled threefolds (see \S \ref{S:Hilb})\,:

  \begin{coralpha}\label{Ca:MazQ1uni}
 Suppose $X$ is a smooth projective threefold over a field   $K\subseteq \mathbb
 C$ and assume that $X_\cx$ is uniruled.
 Then the intermediate Jacobian $J^3(X_{\mathbb C})$ descends to an abelian
 variety $J/K$,   which is  a  phantom abelian variety  for $H^{3}(X_{\bar
 	K},\rat_\ell(1))$, and is dominated by the Albanese variety of  (a product of
 resolutions of singularities  of a finite number of) components of a Hilbert
 scheme
 parameterizing  dimension-$1$ subschemes of  $X$ over $K$.
  \end{coralpha}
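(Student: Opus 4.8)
The plan is to obtain the corollary as a direct specialization of Theorems \ref{T:main} and \ref{Ta:MazQ1}, with the only real work being to verify that the hypotheses of those theorems simplify in the uniruled threefold case. First I would take $n=1$ in Theorem \ref{T:main}, so that we are dealing with the Abel--Jacobi map $AJ : \operatorname{A}^{2}(X_\cx) \to J^{3}(X_\cx)$ and the Galois representation $\coniveau^1 H^3(X_{\bar K},\rat_\ell(1))$. The theorem produces a distinguished model $J/K$ of $J^3_a(X_\cx)$ with $\Gamma_* : H^1(J_{\bar K},\rat_\ell) \hookrightarrow H^3(X_{\bar K},\rat_\ell(1))$ a split inclusion of $\gal(K)$-representations with image $\coniveau^1 H^3(X_{\bar K},\rat_\ell(1))$. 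So the entire statement will follow once we know two things: (i) $J^3_a(X_\cx) = J^3(X_\cx)$, i.e.\ the Abel--Jacobi map is surjective, so that $J$ is a model of the full intermediate Jacobian; and (ii) $\coniveau^1 H^3(X_{\bar K},\rat_\ell(1)) = H^3(X_{\bar K},\rat_\ell(1))$, so that $J$ is a phantom abelian variety for all of $H^3$.

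Both (i) and (ii) are classical consequences of uniruledness in dimension three. For (i): a uniruled threefold $X_\cx$ has $H^{3,0}(X_\cx) = 0$ (indeed uniruled varieties have $H^{p,0}=0$ for $p>0$, or one can invoke $h^{3,0}=0$ directly), hence $H^3(X_\cx,\cx) = H^{1,2}(X_\cx) \oplus H^{2,1}(X_\cx)$ has Hodge coniveau $1$; and for uniruled threefolds the generalized Hodge conjecture in this range is known — the Abel--Jacobi map $\operatorname{A}^2(X_\cx) \to J^3(X_\cx)$ is surjective by Bloch--Srinivas type arguments (decomposition of the diagonal for varieties with $\chow_0$ supported on a surface, which applies since uniruled threefolds have $\chow_0$ supported in dimension $\le 2$). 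This is exactly the surjectivity hypothesis discussed after the statement of Mazur's first question in the introduction, flagged there as "known to hold in many cases (e.g., uniruled threefolds)." For (ii): once the Abel--Jacobi map is surjective, the image of $\Gamma_*$, which Theorem \ref{T:main} identifies with $\coniveau^1 H^3(X_{\bar K},\rat_\ell(1))$, has the same dimension as $H^1(J_{\bar K},\rat_\ell) = H^3(X_{\bar K},\rat_\ell(1))$ (the last equality because over $\cx$ the target of the surjective polarized Abel--Jacobi map is the whole intermediate Jacobian, and Betti numbers are invariant under the comparison isomorphism and base change between $\bar K$ and $\cx$); hence the split inclusion $\Gamma_*$ is an isomorphism and $J$ is a phantom abelian variety for $H^3(X_{\bar K},\rat_\ell(1))$.

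Finally, the Hilbert scheme statement is immediate from Theorem \ref{Ta:MazQ1} applied with $n=1$: the phantom abelian variety $J/K$ is dominated by the Albanese variety of a finite product of resolutions of singularities of finitely many components of a Hilbert scheme parameterizing codimension-$2$ — equivalently dimension-$1$ — subschemes of $X$ over $K$. I do not anticipate a serious obstacle here: the substance is entirely in Theorems \ref{T:main} and \ref{Ta:MazQ1}, and the corollary is a matter of checking that uniruledness of a threefold forces Hodge coniveau $1$ on $H^3$ together with surjectivity of the Abel--Jacobi map, both of which are standard. The one point to state carefully is the identification $\coniveau^1 = H^3$, which I would phrase as: surjectivity of $AJ$ forces $\coniveau^1 H^3(X_\cx,\rat(1))$ to be all of $H^3(X_\cx,\rat(1))$ on the Hodge-theoretic side (the image of $AJ|_{\operatorname{A}^2}$ is a subtorus whose cotangent space is $H^{2,1}$, and it equals $J^3$ only if $\coniveau^1 H^3 = H^3$), and then transport this to $\ell$-adic cohomology over $\bar K$ via the comparison with the complex coniveau as in the discussion preceding Theorem \ref{T:main}.
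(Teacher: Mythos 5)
Your approach matches the paper's: apply Theorems \ref{T:main} and \ref{Ta:MazQ1} with $n=1$, reduce to showing that a uniruled threefold has $\coniveau^1 H^3 = H^3$ and surjective Abel--Jacobi, and establish this via a Bloch--Srinivas decomposition-of-the-diagonal argument based on $\operatorname{CH}_0$ being supported on a surface. The paper's own proof is more compressed (it cites \cite{BlSr83} once to get geometric coniveau $1$ in degree $3$, and leaves the rest to Theorems \ref{T:main} and \ref{Ta:MazQ1}), whereas you separately derive Hodge coniveau $1$ from $h^{3,0}=0$ and then surjectivity of $AJ$ from Bloch--Srinivas before concluding $\coniveau^1 = H^3$ by a dimension count; this is a valid but slightly longer route through the same tool.

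One small inaccuracy worth correcting: your parenthetical ``uniruled varieties have $H^{p,0}=0$ for $p>0$'' is false (e.g.\ $\proj^1\times E$ for an elliptic curve $E$ is uniruled with $H^{1,0}\ne 0$; the vanishing of \emph{all} $H^{p,0}$ for $p>0$ requires rational connectedness). What uniruledness does give is $H^{n,0}=0$ in dimension $n$, i.e.\ $h^{3,0}=0$ here, which you also state and which is all you use, so the overall argument is unaffected.
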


  \vskip .2cm 
  
  \noindent \textbf{Acknowledgments.} 
  We would like to thank Ofer Gabber for comments that were instrumental in
  arriving at Theorem \ref{T:main}.  We also thank the referee for helpful suggestions.

  \vskip .2 cm

  \noindent \textbf{Conventions.}  We use the same conventions as in
  \cite{ACMVdcg}.  
  A \emph{variety} over a field is a geometrically reduced separated
  scheme of finite type over that field.
  A \emph{curve}
  (resp.~\emph{surface})  is a variety of pure dimension $1$ (resp.~$2$).
  Given a variety $X$,   $\operatorname{CH}^i(X)$  denotes  the Chow
  group of codimension~$i$ cycles modulo rational equivalence, and
  $\operatorname{A}^i(X)\subseteq \chow^i(X)$  denotes the subgroup of cycles
  algebraically equivalent to $0$.  If $X$ is a smooth projective variety over the
  complex numbers, then we denote by $J^{2n+1}(X) =
  \operatorname{F}^{n+1}H^{2n+1}(X,\cx)\backslash H^{2n+1}(X,\cx)/H^{2n+1}(X,\integ)$ 
     the complex torus that is the
  $(2n+1)$-th intermediate Jacobian of $X$, and  we denote $J_a^{2n+1}(X)$  the
  image of the Abel--Jacobi map $\operatorname{A}^{n+1}(X) \to
  J^{2n+1}(X)$. A
  choice of polarization on $X$ naturally endows  the complex torus
  $J_a^{2n+1}(X)$ with the structure of a polarized complex abelian
  variety, and $H^1(J_a^{2n+1}(X),\rat) \iso
  \coniveau^nH^{2n+1}(X,\rat)(n)$.  If $C/K$ is a smooth projective geometrically irreducible curve over a field, we will sometimes write $J(C)$ for $\pic^\circ_{C/K}$.
Given a field $K$, we denote by $\bar K$ a separable closure.
  Finally, given an abelian group $A$, we denote by $A[N]$ the kernel of the
  multiplication-by-$N$ map\,; and if $A$ is an abelian group scheme over a field
  $K$, we write $A[N]$ for $A(\bar K)[N]$.

 \section{A result on cohomology}
 
 The main point of this section is to prove  Proposition \ref{P:niveauK}, 
 strengthening \cite[Prop.~1.3]{ACMVdcg}.  Recall that if $X$ is a smooth
 projective variety over a field $K$, then the geometric coniveau filtration 
 $\coniveau^\nu H^{i}(X_{\bar
  K},\rat_\ell)$ is defined by\,:
 
 \begin{equation} \label{E:coniveau}
 \coniveau^\nu H^{i}(X_{\bar
  K},\rat_\ell) := \sum_{\substack{ Z\subseteq
   X\\\text{closed, codim }\ge \nu}} \operatorname{ker}\left(
 H^{i}(X_{\bar{K}},\rat_\ell) \rightarrow
 H^{i}(X_{\bar{K}}
 \backslash Z_{\bar{K}}, \rat_\ell)\right).
 \end{equation}
 If $K=\mathbb C$, the geometric coniveau filtration $\coniveau^\nu
 H^{i}(X,\rat)$ is defined similarly.  We direct the reader to \cite[\S
 1.2]{ACMVdcg} for a review of some of the properties we use here.   Sometimes, we
 will abuse notation slightly and denote the $r$-th Tate twist of step $\nu$ in
 the geometric coniveau filtration by $\coniveau^\nu H^{i}(X_{\bar
  K},\rat_\ell(r)):=(\coniveau^\nu H^{i}(X_{\bar
  K},\rat_\ell))\otimes_{\mathbb Q_\ell} \mathbb Q_\ell(r)$, and similarly for
 Betti cohomology.

 \begin{pro} \label{P:niveauK}
  Suppose $X$ is a smooth  projective variety  
  over a field $K\subseteq \cx$,
  and let $n$ be a nonnegative integer. Then there
  exist a geometrically integral smooth
  projective curve
  $C$ over $K$, admitting a $K$-point,  and a correspondence $\gamma \in
  \operatorname{CH}^{n+1}(C\times_K
  X)_{\mathbb{Q}}$  such that for all  primes $\ell$,  the
  induced morphism of
  $\operatorname{Gal}(K)$-representations
  $$
  \gamma_*: H^1(C_{\overline K},{\mathbb{Q}}_\ell) \rightarrow
  H^{2n+1}(X_{\overline K},{\mathbb{Q}}_\ell(n))
  $$ 
  has image
  $\coniveau^nH^{2n+1}(X_{\overline K},{\mathbb{Q}}_\ell(n))$. 
  Likewise, the morphism of Hodge structures
  $$
  \gamma_*: H^1(C_{\mathbb{C}},{\mathbb{Q}}) \rightarrow
  H^{2n+1}(X_{\mathbb{C}},{\mathbb{Q}}(n))
  $$ 
  has image
  $\coniveau^nH^{2n+1}(X_{\mathbb{C}},{\mathbb{Q}}(n))$\,; in particular, the
  image of $\gamma_*
  : J(C_\cx) \to J^{2n+1}(X_\cx)$ is $J^{2n+1}_a(X_\cx)$.
 \end{pro}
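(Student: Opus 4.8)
\emph{Proof plan.} The plan is to first realise $\coniveau^nH^{2n+1}(X_{\bar K},\rat_\ell(n))$ as the image of a Gysin map from a smooth projective variety over $K$ of dimension $\dim X-n$, then to cut that variety down to a pointed geometrically integral curve by a Bertini argument carried out over $K$, and finally to obtain $\gamma$ as a composite of correspondences over $K$. Since each correspondence will be defined over $K$, the induced map $\gamma_*$ will automatically be $\operatorname{Gal}(K)$-equivariant.

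First I would recall, keeping careful track of fields of definition, that the deepest coniveau is a Gysin image. Write $d=\dim X$. As $X$ is smooth and proper, $H^{2n+1}(X_{\bar K},\rat_\ell)$ is pure of weight $2n+1$; given a closed integral $Z\subseteq X$ of codimension $c$ and a resolution $\iota\colon\widetilde Z\to Z\hookrightarrow X$ (available in characteristic $0$), a weight analysis of the localization sequence identifies $\operatorname{ker}\bigl(H^{2n+1}(X_{\bar K},\rat_\ell)\to H^{2n+1}((X\setminus Z)_{\bar K},\rat_\ell)\bigr)$ with the image of the Gysin map $\iota_*\colon H^{2n+1-2c}(\widetilde Z_{\bar K},\rat_\ell)(c-n)\to H^{2n+1}(X_{\bar K},\rat_\ell(n))$; this vanishes unless $c\le n$, so within the coniveau filtration only $c=n$ contributes, and then the source is $H^1(\widetilde Z_{\bar K},\rat_\ell)$. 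Because the geometric coniveau filtration is stable under $\operatorname{Gal}(K)$, it is spanned by such kernels with $Z$ defined over $K$; resolving over $K$ and forming a finite disjoint union, I obtain a smooth projective $\widetilde Z/K$ of pure dimension $d-n$ and a $K$-morphism $\iota\colon\widetilde Z\to X$ with $\operatorname{Im}\bigl(\iota_*\colon H^1(\widetilde Z_{\bar K},\rat_\ell)\to H^{2n+1}(X_{\bar K},\rat_\ell(n))\bigr)=\coniveau^nH^{2n+1}(X_{\bar K},\rat_\ell(n))$ for every $\ell$, and likewise on Betti cohomology (the two geometric coniveau filtrations corresponding under the comparison isomorphism). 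This is essentially \cite[Prop.~1.3]{ACMVdcg}, with the field of definition made explicit.

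Next I would replace $\widetilde Z$ by a pointed geometrically integral curve. Choose an abelian variety $A/K$ equipped with a correspondence over $K$ inducing an isomorphism $H^1(A_{\bar K},\rat_\ell)\cong H^1(\widetilde Z_{\bar K},\rat_\ell)$ (such an $A$ exists, e.g.\ dual to $\operatorname{Pic}^0_{\widetilde Z/K}$), fix a closed embedding $A\hookrightarrow\proj^M_K$, and let $C\subseteq A$ be a general complete intersection curve through the origin $0\in A(K)$. Since $K$ is infinite, Bertini's theorems make $C$ smooth and geometrically integral, and $0\in C(K)$ by construction. The inclusion $C\hookrightarrow A$ induces a surjection of abelian varieties $\operatorname{Jac}(C)=\operatorname{Alb}(C)\twoheadrightarrow\operatorname{Alb}(A)=A$, dual to the weak-Lefschetz injection $H^1(A_{\bar K})\hookrightarrow H^1(C_{\bar K})$; combining this surjection with the $K$-point of $C$ (to turn the associated morphism of abelian varieties into a divisorial correspondence, via Weil's theory) and with the correspondence relating $A$ to $\widetilde Z$, I obtain $\beta\in\operatorname{CH}^1(C\times_K\widetilde Z)_{\rat}$ whose action $\beta_*\colon H^1(C_{\bar K},\rat_\ell)\to H^1(\widetilde Z_{\bar K},\rat_\ell)$ is surjective, and similarly on Betti cohomology.

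Finally I would set $\gamma:=\Gamma_\iota\circ\beta$, with $\Gamma_\iota\in\operatorname{CH}^{d}(\widetilde Z\times_K X)$ the graph of $\iota$. Composing a codimension-$1$ correspondence on $C\times_K\widetilde Z$ with a codimension-$d$ one on $\widetilde Z\times_K X$, and using $\dim\widetilde Z=d-n$, places $\gamma$ in $\operatorname{CH}^{1+d-(d-n)}(C\times_K X)_{\rat}=\operatorname{CH}^{n+1}(C\times_K X)_{\rat}$; and $\gamma_*=\iota_*\circ\beta_*$ has image $\iota_*\bigl(H^1(\widetilde Z_{\bar K},\rat_\ell)\bigr)=\coniveau^nH^{2n+1}(X_{\bar K},\rat_\ell(n))$, the identical computation over $\mathbb C$ giving the statement for Hodge structures. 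Since $\gamma$ carries $\operatorname{A}^1(C_\cx)$ into $\operatorname{A}^{n+1}(X_\cx)$ and is compatible with Abel--Jacobi maps, the induced morphism of complex tori $\gamma_*\colon J(C_\cx)\to J^{2n+1}(X_\cx)$ has image inside $J^{2n+1}_a(X_\cx)$, and since the rational $H^1$ of this image equals $\coniveau^nH^{2n+1}(X_\cx,\rat(n))$ it is all of $J^{2n+1}_a(X_\cx)$, by the characterization recalled in the introduction. The crux throughout is descent: over $\bar K$ each step is classical, so the real work is to realise the generating subvarieties, their resolution, the abelian variety $A$, the Bertini curve and \emph{all} of the intervening correspondences over $K$ simultaneously. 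I expect the two delicate points to be the weight argument of the second paragraph (identifying the kernel \emph{exactly} with a Gysin image, not merely containing it) and the Bertini argument of the third (geometric integrality of a complete intersection curve passing through a prescribed rational point, which uses that $K$ is infinite).
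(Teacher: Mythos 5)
Your overall architecture is the same as the paper's: express $\coniveau^n H^{2n+1}$ as a Gysin image from a smooth projective $K$-variety of dimension $d-n$, route through an abelian variety over $K$ (to gain geometric integrality and a $K$-point for free), cut down to a curve by Bertini, and compose correspondences defined over $K$. The difference is where the abelian variety is inserted: the paper first applies the Lefschetz lemma to $Y$ (your $\widetilde Z$) to get an intermediate curve $C_0$ over $K$ that is allowed to be geometrically disconnected and pointless, then invokes Proposition~\ref{P:curveCoh} to produce a $K$-morphism $C_0 \to \pic^\circ_{C_0/K}$ inducing an isomorphism on $H^1$, and only then cuts $\pic^\circ_{C_0/K}$ down by a second Lefschetz step. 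You instead try to pass directly from $\widetilde Z$ to an abelian variety $A$.

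That direct step is the gap. The assertion ``such an $A$ exists, e.g.\ dual to $\operatorname{Pic}^0_{\widetilde Z/K}$'' only names an abelian variety over $K$ whose $\bar K$-realization has the right $H^1$; it does not supply the \emph{correspondence over $K$} between $\widetilde Z$ and $A$ inducing the comparison on $H^1$. This is precisely the subtle content when $\widetilde Z$ has no $K$-point and becomes disconnected over $\bar K$: the Albanese morphism then lands in a torsor rather than in $A$, and a Poincar\'e bundle on $\widetilde Z \times_K \operatorname{Pic}^0_{\widetilde Z/K}$ need not descend to $K$ without a rigidification. This is exactly what the paper's Appendix~\ref{A:Appendix} is about, and it is nontrivial even for curves (note the ``Components of the Picard scheme'' discussion, which handles the case where $C_{\bar K}$ is reducible). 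The paper's insertion of the intermediate Bertini curve is precisely a device to reduce this ``correspondence to the abelian variety'' problem from a possibly higher-dimensional $\widetilde Z$ to the case of curves, where Proposition~\ref{P:curveCoh} proves it. Your two flagged delicate points (exactness of the weight/Gysin identification, and geometric integrality from Bertini) are genuine but are handled by standard references; the point you did not flag is the one carrying the real weight. The plan is salvageable either by proving the higher-dimensional analogue of Proposition~\ref{P:curveCoh} directly, or by inserting the intermediate Bertini curve as the paper does.
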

 
 \begin{rem}\label{R:P:niveau}
  The result \cite[Prop.~1.3]{ACMVdcg} differs from Proposition \ref{P:niveauK}
  only in the sense that is it not
  shown there  that $C$ can be taken to admit a $K$-rational point  or to be
  geometrically integral.
 \end{rem}

 There are three main  ingredients in the proof
 of Proposition \ref{P:niveauK}\,: the Bertini theorems,  the  Lefschetz type result in
 Lemma \ref{L:ladicLef} below describing cohomology in degree $1$, and
 Proposition \ref{P:curveCoh} regarding the cohomology of curves.   
 While we expect Proposition \ref{P:curveCoh} is well-known, 
 for lack of a reference  we include a proof in  Appendix \ref{A:Appendix}.

 \begin{lem}[Lefschetz] \label{L:ladicLef}
  Suppose $X$ is a smooth  projective variety  
  over a field $K$ with separable closure $\overline K$. 
   There exist a  smooth curve 
  $C\hookrightarrow X$ over $K$, which is a (general) linear section 
  for an appropriate projective embedding of $X$, 
  and a correspondence $\gamma \in
  \operatorname{CH}^{1}(C\times_K
  X)_{\mathbb{Q}}$  such that
  for all $\ell\not = \operatorname{char}(K)$,
  the induced morphism of
  $\operatorname{Gal}(K)$-representations
  $$
\xymatrix{
  \gamma_*: H^1(C_{\overline K},{\mathbb{Q}}_\ell) \ar@{->>}[r]&
  H^{1}(X_{\overline K},{\mathbb{Q}}_\ell)
}
  $$ 
  is surjective.  
  Moreover, if $X$ is geometrically integral (resp.~admits a $K$-point), then $C$
  can be taken to be geometrically integral (resp.~to admit a $K$-point).  
 \end{lem}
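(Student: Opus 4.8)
The plan is to use the weak Lefschetz theorem to reduce the assertion to the existence of a surjective homomorphism of abelian varieties over $K$, and then to realize that homomorphism by a $\rat$-linear combination of divisor correspondences.

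First I would fix a projective embedding of $X$ and take $C\subseteq X$ to be a smooth curve cut out as a general linear section of the appropriate dimension; iterating Bertini's theorem --- in its hypersurface form, which is available over finite fields too and is the reason for allowing a re-embedding of $X$ --- produces such a $C$ over $K$, which may moreover be taken geometrically integral when $X$ is, and to contain a prescribed $K$-point of $X$ when one exists. By the weak Lefschetz theorem in \'etale cohomology, the pullback $\iota^*\colon H^1(X_{\bar K},\rat_\ell)\to H^1(C_{\bar K},\rat_\ell)$ along $\iota\colon C\hookrightarrow X$ is injective for every $\ell\ne\operatorname{char}(K)$; equivalently, the induced homomorphism of Albanese varieties over $K$, namely $\iota_*\colon\alb_{C/K}\to\alb_{X/K}$ (the transpose of pullback $\iota^*\colon\pic^0_{X/K}\to\pic^0_{C/K}$ on Picard varieties), is surjective, surjectivity being detectable after base change to $\bar K$ on $\ell$-adic Tate modules. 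Post-composing with a $K$-rational polarization $\alb_{X/K}\to\widehat{\alb_{X/K}}=\pic^0_{X/K}$, I get a surjective homomorphism $\phi\colon\alb_{C/K}\to\pic^0_{X/K}$ over $K$.

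It remains to realize $\phi$ by a correspondence in $\chow^1(C\times_K X)_\rat$. To any divisor class $D$ on $C\times X$ one attaches a homomorphism $\phi_D\colon\alb_C\to\pic^0_X$ --- for instance via the fibrewise rule $c\mapsto[D\cdot(\{c\}\times X)]$, normalized --- and $D\mapsto\phi_D$ is $\rat$-linear and $\operatorname{Gal}(K)$-equivariant. Over $\bar K$ every homomorphism arises this way: precomposing $\phi_{\bar K}$ with an Albanese map $C_{\bar K}\to\alb_{C_{\bar K}}$ and pulling back a Poincar\'e bundle on $X_{\bar K}\times\pic_{X_{\bar K}}$ (available since $X_{\bar K}$ has a rational point) yields $\bar D\in\chow^1(C_{\bar K}\times X_{\bar K})_\rat$ with $\phi_{\bar D}=\phi_{\bar K}$. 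Since $\phi$ is already defined over $K$, we get $\phi_{\sigma\bar D}=\sigma(\phi_{\bar D})=\phi_{\bar K}$ for all $\sigma\in\operatorname{Gal}(K)$; so, choosing a finite extension $L/K$ over which $\bar D$ is defined and setting $\gamma:=\tfrac1{[L:K]}\sum_{\sigma\in\operatorname{Gal}(L/K)}\sigma\bar D$, Galois descent for Chow groups with $\rat$-coefficients places $\gamma$ in $\chow^1(C\times_K X)_\rat$ while preserving $\phi_{\gamma_{\bar K}}=\phi_{\bar K}$. Finally, the map $\gamma_*\colon H^1(C_{\bar K},\rat_\ell)\to H^1(X_{\bar K},\rat_\ell)$ depends only on the K\"unneth component of the cycle class of $\gamma$ in $H^1(C_{\bar K},\rat_\ell)\otimes H^1(X_{\bar K},\rat_\ell)$ and, under the identifications of these $H^1$'s with duals of $\ell$-adic Tate modules of the Albanese varieties, coincides with the map induced by $\phi_{\gamma_{\bar K}}=\phi_{\bar K}$; as the latter is surjective, so is $\gamma_*$, for every $\ell\ne\operatorname{char}(K)$, and $\gamma_*$ is $\operatorname{Gal}(K)$-equivariant because $\gamma$ is defined over $K$.

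I expect the genuine subtlety to be descent to the ground field $K$, and specifically the averaging step: it does not destroy surjectivity precisely because the homomorphism $\phi$ being realized is $\operatorname{Gal}(K)$-invariant to begin with, so all the conjugates $\sigma\bar D$ induce the \emph{same} homomorphism on cohomology. Note that no hard Lefschetz or Lefschetz standard conjecture is used --- only weak Lefschetz for $H^1$, Bertini, and standard facts relating divisor correspondences to homomorphisms of Picard and Albanese varieties. If needed later, the Betti/Hodge version (relevant when $K\subseteq\cx$) is obtained by the identical argument with Betti cohomology replacing $\ell$-adic cohomology.
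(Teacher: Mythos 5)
Your proof is correct, and it takes a genuinely different route from the paper's, which is worth comparing. The paper's argument runs through \emph{hard} Lefschetz: it takes the isomorphism $\iota_*\iota^*\colon H^1(X_{\bar K},\rat_\ell)\xrightarrow{\ \sim\ } H^{2\dim X-1}(X_{\bar K},\rat_\ell(\dim X-1))$ and then invokes the fact (cited from Kleiman) that the Lefschetz standard conjecture is known in degree $\le 1$, so that $(\iota_*\iota^*)^{-1}$ is induced by a correspondence $\Lambda\in\chow^1(X\times_KX)_\rat$; the desired $\gamma$ is the composite $\Lambda\circ\Gamma_\iota$. You instead use only \emph{weak} Lefschetz (injectivity of $\iota^*$ on $H^1$), immediately convert this to a $K$-rational surjection of abelian varieties $\alb_{C/K}\twoheadrightarrow\alb_{X/K}\to\pic^0_{X/K}$, and then realize that homomorphism by a divisor class in $\chow^1(C\times_KX)_\rat$ via a Poincar\'e-bundle construction over $\bar K$ followed by Galois averaging — the averaging being harmless precisely because all conjugates of $\bar D$ induce the same $K$-rational homomorphism. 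In effect you re-derive, in the form you need, the degree-one case of the Lefschetz standard conjecture that the paper treats as a black box; the two proofs produce different correspondences (yours lives directly on $C\times_KX$, the paper's is a composite through $X\times_K X$) but the same map on $H^1$. Your version is longer but more self-contained and makes explicit exactly where $\rat$-coefficients and Galois descent enter; the paper's version is shorter because it offloads the algebraicity of the inverse Lefschetz operator to the literature. Both are correct, and both handle the geometric integrality and $K$-point refinements via the irreducible Bertini theorem in the same way.
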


 \begin{proof}
  By Bertini \cite{poonen}, let $\iota : C \hookrightarrow X$ be a
  one-dimensional smooth general linear section of an appropriate projective
  embedding of $X$. 
  Note that by the irreducible Bertini theorems \cite{charlespoonen},  if $X$ is
  geometrically integral (resp.~admits a $K$-point), then $C$ can also be taken to
  be
  geometrically integral (resp.~to admit a $K$-point)\,; (see e.g., \cite[Thm. B.1]{ACMV} for the version we use here).

  The hard Lefschetz 
  theorem \cite[Thm.\ 4.1.1]{deligneweil2} states that intersecting with $C$
  yields an isomorphism 
  $$
  \iota_*\iota^*:H^1(X_{\bar K},\mathbb Q_\ell) \hookrightarrow H^1(C_{\bar
   K},\mathbb
  Q_\ell)  \twoheadrightarrow H^{2\dim X-1}(X_{\bar
   K},\mathbb Q_\ell(\dim X-1)).
  $$
  The Lefschetz Standard Conjecture is known for $\ell$-adic cohomology and for
  Betti cohomology in degree $\leq 1$ (see \cite[Thm.~2A9(5)]{kleiman}),
  meaning in our case that the map
  $\left(\iota_*\iota^*\right)^{-1}$ 
  is induced by a correspondence, say  $\Lambda \in \operatorname{CH}^1(X\times_K
  X)_\rat$.   
  Therefore, the composition 
  \begin{equation*}\label{E:LCoNi}
  \xymatrix@C=2em{
   H^1(C_{\bar K},\mathbb Q_\ell)
   \ar@{->>}[r]_<>(0.5){\iota_*}^<>(0.5){(\Gamma_{\iota})_*} &  
   H^{2\dim X-1}(X_{\bar K},\mathbb Q_\ell(\dim X-1)) \ar@{->}[r]^<>(0.5){
    \Lambda_*}_<>(0.5)\cong & H^1(X_{\bar K},\mathbb Q_\ell)
  }
  \end{equation*}
is surjective and is induced by the correspondence $\gamma := \Lambda \circ
  \Gamma_\iota$, where $\Gamma_\iota$ denotes the graph of $\iota$ . 
 \end{proof}

 \begin{proof}[Proof of Proposition \ref{P:niveauK}]
  Up to working component-wise, we can and do assume
  that $X$ is irreducible, say of dimension $d_X$.
  Since 
  $K\subseteq \mathbb C$, 
  we have from the characterization of coniveau (see e.g.,  \cite[(1.2)]{ACMVdcg})
  that there exist  a 
  smooth projective variety $Y$ (possibly disconnected)  of pure dimension
  $d_Y=d_X-n$ over $K$,
  and a
  $K$-morphism $f:Y\to X$ such that 
  $$ \operatorname{N}^nH^{2n+1}(X_{\bar K},{\mathbb{Q}}_\ell(n)) =
  \mathrm{Im}\left(
  f_*:H^1({Y}_{\bar K},{\mathbb{Q}}_\ell)
  \rightarrow
  H^{2n+1}(X_{\bar K},{\mathbb{Q}}_\ell(n))\right).
  $$ 
  Using Lemma \ref{L:ladicLef} applied to $Y$, there exist a smooth projective
  curve
  $C$ over $K$ (possibly disconnected) and a correspondence $\Gamma\in
  \operatorname{CH}^1(C\times_KY)_{\mathbb Q}$ such that the composition 
  \begin{equation*}
  \xymatrix@C=2em{
   H^1(C_{\bar K},\mathbb Q_\ell) \ar@{->>}[r]^<>(0.5){\Gamma_*} & H^1(Y_{\bar
    K},\mathbb Q_\ell)
   \ar[r]^<>(0.5){f_*} & H^{2n+1}(X_{\bar K},\mathbb Q_\ell(n))\\
  }
  \end{equation*} 
  has image
  $\operatorname{N}^nH^{2n+1}(X_{\bar K},{\mathbb{Q}}_\ell(n))$. 
  
  As recalled in Proposition \ref{P:curveCoh},
  there is a morphism $\beta:C\to \pic^\circ_{C/K}$ inducing an  
  isomorphism 
  $\beta^*=(\Gamma_\beta^t)_*:H^1(\pic^\circ_{C_{\bar K}/\bar
   K},\mathbb Q_\ell)\to 
  H^1(C_{\bar K},\mathbb Q_\ell)$. 
  Observe that $\pic^\circ_{C/K}$ is
  geometrically integral and admits a $K$-point.  Lemma
  \ref{L:ladicLef} yields a smooth geometrically integral curve $D/K$ endowed
  with a $K$-point, and a
  surjection  $
  H^1(D_{\bar K},\mathbb Q_\ell)\twoheadrightarrow
  H^1(\pic^\circ_{C_{\bar K}/\bar K},\mathbb Q_\ell)
  $
  induced by a
  correspondence $\widetilde  \Gamma$ over $K$.   The composition
  \begin{equation*}
  \xymatrix@C=1.4em{
   H^1(D_{\bar K},\mathbb Q_\ell) \ar@{->>}[r]^<>(0.5){\widetilde 
    \Gamma_*}_<>(0.5){\operatorname{Lef}} & H^1(\pic^\circ_{C_{\bar
     K}/\bar K},\mathbb Q_\ell)
   \ar[r]^<>(0.5){(\Gamma_\beta^t)_*}_<>(0.5){\cong}&H^1(C_{\bar K},\mathbb Q_\ell)
   \ar@{->>}[r]^<>(0.5){\Gamma_*}_<>(0.5){\operatorname{Lef}}  & H^1(Y_{\bar
    K},\mathbb Q_\ell)
   \ar@{->>}[r]^<>(0.5){f_*}_<>(0.5){\operatorname{Def}} &
   \coniveau^nH^{2n+1}(X_{\bar K},\mathbb
   Q_\ell(n)),
  }
  \end{equation*}
  induced by the  associated composition of correspondences $\gamma$, provides the
  desired surjection 
$$\xymatrix{\gamma_*:H^1(D_{\bar K},\mathbb Q_\ell) \ar@{->>}[r]&
  \coniveau^n H^{2n+1}(X_{\bar K},\mathbb Q_\ell(n))}.$$

Finally, the compatibility of the comparison isomorphisms in
cohomology with Gysin maps and the action of correspondences
(see  e.g., \cite[\S 1.2]{ACMVdcg}), or simply rehashing the argument above after pull-back to $\mathbb C$,   establishes
  that  the image  of the induced morphism of Hodge structures 
  $
  \gamma_*:
  H^1(D_{\mathbb C},{\mathbb{Q}})
  \rightarrow
  H^{2n+1}(X_{\mathbb C},{\mathbb{Q}}(n))
  $
  is
  $\operatorname{N}^nH^{2n+1}(X_{\mathbb C},{\mathbb{Q}}(n)) =
  H^1(J^{2n+1}_a(X_{\mathbb C},\rat))$.  Using the equivalence of
  categories between polarizable effective weight one Hodge structures
  and complex abelian varieties, we see that this morphism of Hodge
  structures is induced by a surjection of abelian varieties $\gamma_*:
  J(D_\cx) \twoheadrightarrow J^{2n+1}_a(X_\cx)$.
 \end{proof}

 \section{Proof of Theorem \ref{T:main}\,: Part I, descent of the
   image of the Abel--Jacobi map}

 In this section we establish the following theorem, proving the first part of
 Theorem \ref{T:main}.

 \begin{teo}\label{T:JacDesc}
  Suppose $X$ is a smooth projective variety over a field $K\subseteq \mathbb C$,
  and let
  $n$ be a nonnegative integer. Then the image  of the
  Abel--Jacobi map $J^{2n+1}_a(X_\mathbb C)$
  admits a distinguished model $J$ over $K$ such that the induced map
  $AJ[N]:\operatorname{A}^{n+1}(X_{\mathbb C})[N]\to
  J_a^{2n+1}(X_{\mathbb C})[N]$ on $N$-torsion  is $\operatorname{Aut}(\cx/K)$-equivariant for all
  positive integers $N$.
  
  Moreover, there is a correspondence $\Gamma\in \operatorname{CH}^{\dim
   (J)+n}(J\times_K X)$ such that for each prime number $\ell$,  we have that   $\Gamma$ induces an  inclusion of $\operatorname{Gal}(K)$-representations  
  \begin{equation}\label{E:cycles4}
  \xymatrix{ 
   H^1(J_{\bar K},\mathbb Q_\ell) \ar@{^(->}[r]^<>(0.5){\Gamma_*}&
   H^{2n+1}(X_{\bar K},\mathbb Q_\ell(n)),
  }
  \end{equation}
  with image $\operatorname{N} ^n H^{2n+1}(X_{\bar K},\mathbb Q_\ell(n))$. 
 \end{teo}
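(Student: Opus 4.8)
The plan is to construct the model $J$ directly as a quotient of the Jacobian of the curve supplied by Proposition~\ref{P:niveauK}, and then to extract the Galois-equivariance of $AJ$ on torsion from the fact that this quotient morphism is defined over $K$, combined with the Galois-equivariance of Bloch's map. Concretely, first apply Proposition~\ref{P:niveauK} to obtain a geometrically integral smooth projective curve $C/K$ with a $K$-point — so that $J(C):=\pic^\circ_{C/K}$ is an abelian variety over $K$ — and a correspondence $\gamma\in\operatorname{CH}^{n+1}(C\times_K X)_\rat$ with $\gamma_*:H^1(C_{\bar K},\rat_\ell)\twoheadrightarrow \coniveau^n H^{2n+1}(X_{\bar K},\rat_\ell(n))$ surjective and, over $\cx$, $\varphi:=\gamma_*:J(C_\cx)\twoheadrightarrow J^{2n+1}_a(X_\cx)$ a surjective homomorphism of complex abelian varieties; since correspondences are compatible with Abel--Jacobi maps, $\varphi$ agrees on $\operatorname{A}^1(C_\cx)=J(C_\cx)(\cx)$ with $AJ_X\circ\gamma^{\operatorname{CH}}_*$.

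The heart of the argument is to descend the closed subgroup scheme $\mathcal K:=\ker\varphi\subseteq J(C_\cx)$ to $K$. In characteristic $0$, $\mathcal K$ is smooth and equals the union of its torsion subgroups $\mathcal K(\cx)[N]$; its identity component $B$ is an abelian subvariety, which descends to $\bar K$ by rigidity of endomorphisms (as $\bar K\subseteq\cx$ is algebraically closed), and the finitely many torsion translates making up $\mathcal K$ also descend, so $\mathcal K$ is defined over $\bar K$. To see $\mathcal K_{\bar K}$ is $\operatorname{Gal}(K)$-stable it suffices to check that $\ker(\varphi[N])\subseteq J(C)[N]$ is $\operatorname{Gal}(K)$-stable for every $N$. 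This is where the hypotheses combine: identifying $J(C)[N]=\operatorname{A}^1(C_{\bar K})[N]$ and using that torsion in Chow groups is unaffected by $\bar K\subseteq\cx$, the composite $\lambda^X_N\circ\gamma^{\operatorname{CH}}_*[N]$ of the cycle-class action of $\gamma$ with Bloch's map $\lambda^X_N$ is a morphism of $\operatorname{Gal}(K)$-modules ($\gamma$ is defined over $K$; Bloch's map is Galois-equivariant and functorial for correspondences); and since $\lambda^X_N$, restricted to algebraically trivial cycles, factors through $AJ[N]$ via an \emph{injective} map $\iota_N:J^{2n+1}_a(X_\cx)[N]\hookrightarrow H^{2n+1}(X_{\bar K},\integ/N(n+1))$, we get $\ker(\varphi[N])=\ker(\lambda^X_N\circ\gamma^{\operatorname{CH}}_*[N])$, which is therefore $\operatorname{Gal}(K)$-stable. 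Hence $\mathcal K$ descends to a closed subgroup scheme of $J(C)$ over $K$.

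Set $J:=J(C)/\mathcal K$, an abelian variety over $K$; base-changing the quotient morphism $q:J(C)\to J$ to $\cx$ identifies $J_\cx$ with $J^{2n+1}_a(X_\cx)$ and $q_\cx$ with $\varphi$, so $J$ is a model over $K$ of $J^{2n+1}_a(X_\cx)$, and it is the distinguished model of the introduction (uniqueness following from surjectivity of $AJ$). For the Galois-equivariance of $AJ[N]$ — equivalently, of $AJ$ on torsion, where $\operatorname{Aut}(\cx/K)$-equivariance and $\operatorname{Gal}(K)$-equivariance coincide as torsion points lie over $\bar K$ — I would pass to $\ell$-adic Tate modules: $T_\ell q:T_\ell J(C)\to T_\ell J$ is $\operatorname{Gal}(K)$-equivariant with finite cokernel, and $\iota\circ T_\ell q$ equals the $\operatorname{Gal}(K)$-equivariant composite of $T_\ell(\gamma^{\operatorname{CH}}_*)$ with the Tate-module Bloch map, so — using the injectivity of $\iota_N$, which makes $\iota(T_\ell J)$ a saturated, $\operatorname{Gal}(K)$-stable sublattice — the map $\iota:T_\ell J\hookrightarrow H^{2n+1}(X_{\bar K},\integ_\ell(n+1))$ is $\operatorname{Gal}(K)$-equivariant; reducing modulo $\ell^n$ and assembling over the primes dividing $N$ shows $\iota_N$ is $\operatorname{Gal}(K)$-equivariant, whence, $\iota_N$ being injective and $\iota_N\circ AJ[N]=\lambda^X_N|_{\operatorname{A}^{n+1}}$ being $\operatorname{Gal}(K)$-equivariant, a diagram chase gives that $AJ[N]$ is $\operatorname{Gal}(K)$-equivariant.

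Finally, to produce the correspondence $\Gamma$: Poincaré reducibility over $K$, applied to $B=\mathcal K^\circ\subseteq J(C)$ (defined over $K$), yields an abelian subvariety $B'\subseteq J(C)$ over $K$ with $q|_{B'}$ an isogeny, hence a correspondence $S$ over $K$ for which $S_*$ identifies $H^1(J_{\bar K},\rat_\ell)$ with a $\operatorname{Gal}(K)$-stable complement of $\ker(\gamma_*)$ inside $H^1(C_{\bar K},\rat_\ell)$. Then $\Gamma:=\gamma\circ S$ is defined over $K$, lies in $\operatorname{CH}^{\dim(J)+n}(J\times_K X)_\rat$ by a dimension count, and has $\Gamma_*=\gamma_*\circ S_*$ an injection of $\operatorname{Gal}(K)$-representations with image $\coniveau^n H^{2n+1}(X_{\bar K},\rat_\ell(n))$; clearing denominators makes $\Gamma$ integral without changing the image or injectivity, since multiplication by a nonzero integer is invertible on $\rat_\ell$-cohomology. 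I expect the main obstacle to be the descent of $\mathcal K$ and the torsion-equivariance in the second and third steps: one is forced to quotient by the full, possibly disconnected, $\ker\varphi$ in order to recover $J^{2n+1}_a(X_\cx)$ itself rather than merely its isogeny class, and the crux is that the $K$-rationality of $\gamma$ together with the factorization of the (Galois-equivariant, torsion-insensitive) Bloch map through $AJ$ via an injection is exactly enough to pin down the torsion Galois structure; by contrast the construction and analysis of $\Gamma$ is comparatively formal.
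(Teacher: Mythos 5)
Your proposal is correct and follows essentially the same strategy as the paper's proof, with the same key ingredients: Proposition~\ref{P:niveauK} produces the geometrically integral pointed curve $C/K$ and the correspondence $\gamma$ surjecting onto $\coniveau^nH^{2n+1}$; the kernel of $J(C)\to J^{2n+1}_a$ is shown to descend to $K$ by checking its $N$-torsion is $\operatorname{Gal}(K)$-stable, using the $K$-rationality of $\gamma$ and the fact that Bloch's map is Galois-equivariant and factors through $AJ$ on torsion via an injection; and the equivariance of $AJ[N]$ itself is then extracted from this same Bloch factorization (in the paper, via Lemma~\ref{L:ez-emma}(b)). Your descent-to-$\bar K$ step, phrased as ``rigidity of endomorphisms'' applied to the identity component $B$ plus torsion translates, is the same content as the paper's use of the $\cx/\bar K$-trace in Step~1 together with Lemma~\ref{L:torsdescends} in Step~2, just packaged without the trace formalism. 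The only genuinely divergent step is the construction of $\Gamma$: the paper composes $\gamma$ with the transposed graphs of $\operatorname{alb}:C\to J(C)$ and $f:J(C)\to J$, then invokes a polarization/duality argument (\cite[Lemma~2.3]{ACMVdcg}) to conclude injectivity and identify the image; you instead invoke Poincar\'e reducibility over $K$ to find a $K$-rational complement $B'$ of $\mathcal K^\circ$ in $J(C)$ and restrict $\gamma$ there, which makes the injectivity and the identification of the image with $\operatorname{Im}(\gamma_*)=\coniveau^n$ more transparent at the (small) cost of needing the $K$-rational decomposition and of being a bit vague about how $B'\hookrightarrow J(C)$ is turned into an actual cycle of the right codimension on $J\times_K C$. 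The only spot in your write-up I would press you on is the reduction from the Tate-module statement ``$\iota:T_\ell J\hookrightarrow H^{2n+1}(X_{\bar K},\integ_\ell(n+1))$ is equivariant'' to the claim that $\iota_{\ell^n}$ is equivariant: one must be slightly careful that the mod-$\ell^n$ reduction of the integral $\ell$-adic map agrees with Bloch's finite-level map (the universal-coefficients boundary could a priori intervene); the paper sidesteps this entirely by working at the finite level from the start, which is cleaner.
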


 We will prove the theorem in several steps contained in the following
 subsections.
 
 \begin{rem}\label{R:Distinguished} 
 	As explained below the statement of Theorem \ref{T:main}, an abelian variety over $\cx$ may admit several models over $K$, if it admits any. However, it admits at most one model over $K$ such that the induced map
 	$AJ[N] :\operatorname{A}^{n+1}(X_{\mathbb C})[N]\to
 	J_a^{2n+1}(X_{\mathbb C})[N]$ on $N$-torsion  is $\operatorname{Aut}(\cx/K)$-equivariant for all
 	positive integers $N$.
  Indeed, by Chow's rigidity theorem (see \cite[Thm.~3.19]{conradtrace}), an abelian variety $A/\mathbb C$ descends to  at most one model  defined over $\bar K$\,; moreover, there is at most one model of $A$ defined over $K$ that induces a  given action of $\operatorname{Gal}(K)$ on the $\bar K$-points of $A$. 
 Therefore, since $AJ[N]:\operatorname{A}^{n+1}(X_{\mathbb C})[N]\to
  J_a^{2n+1}(X_{\mathbb C})[N]$ is surjective for all $N$ not divisible by a finite number of fixed primes (this is a general fact about regular homomorphisms\,; see \S \ref{S:Tors-Gen} and Lemma \ref{L:Tor-to-Tor}(b)), and since torsion points of order  not divisible by a finite number of fixed primes are dense,
   the abelian variety
  $J_a^{2n+1}(X_{\mathbb C})$ admits at most one  structure of a scheme over $K$
  such that $AJ[N]$ is 
  $\operatorname{Aut}(\cx/K)$-equivariant for all positive integers $N$ not divisible by the finite number of given primes.  This is
  the sense in which  $J_a^{2n+1}(X_{\mathbb C})$ admits a distinguished model
  over $K$.  
 \end{rem}

 \subsection{Chow rigidity and $L/K$-trace\,: descending from $\mathbb C$ to
  $\bar
  K$} \label{S:rigidity}
 The first step in the proof consists in using Chow rigidity and $\mathbb C/\bar K$-trace
 to descend the image of the Abel--Jacobi map from $\mathbb C$ to $\bar K$.   We
 follow the treatment in \cite{conradtrace}, and refer the reader to \cite[\S
 3.3]{ACMVdcg} where we review the theory in the setting we use here.  
 
 For the convenience of the reader, we briefly recall a few points.  We
 focus on the case where $L/K$ is an extension of algebraically closed fields of
 characteristic $0$. 
 First, we reiterate that by  Chow's rigidity theorem (see \cite[Thm.~3.19]{conradtrace}), an abelian variety $B/L$ descends to  at most one model, up to isomorphism, defined over $K$.  
   Given an abelian variety $B$ defined over $L$, while $B$ need not descend to $K$, there is  
 \cite[Thm.~6.2, Thm.~6.4, Thm.~6.12, p.72, p.76, Thm.~3.19]{conradtrace} 
  an abelian variety $\LKtrace B$ defined over $K$ equipped with an injective 
 homomorphism of abelian varieties 
 $$\xymatrix{
  \LKtrace B_L \ar@{^(->}[r]^\tau&  B
 }
 $$ (together called the $L/K$-trace) with the property that for any abelian
 variety $A/K$, base change gives an identification
\begin{align*}
 \operatorname{Hom}_{\mathsf {Ab}/K}(A,\LKtrace B) &= \operatorname{Hom}_{\mathsf
  {Ab}/L}(A_L,B) \\
f &\mapsto \tau \circ f_L.
\end{align*}
It follows
 that if there is an abelian variety $A/K$ and a surjective homomorphism $A_L\to
 B$, then $\tau$ is surjective and hence an isomorphism\,; in other words, $B$
 descends to $K$ (as $\LKtrace{B}$).

 \begin{proof}[Proof of Theorem \ref{T:JacDesc}, Step 1\,: $J^{2n+1}_a(X_{\mathbb
   C})$ descends to $\bar K$.]
  In the notation of Theorem \ref{T:JacDesc}, we wish to show that
  $J^{2n+1}_a(X_{\mathbb
   C})$ descends to an abelian variety over $\bar K$.   We have shown in
  Proposition
  \ref{P:niveauK} that there exist a smooth projective geometrically integral
  curve
  $C/K$, admitting a $K$-point, and a correspondence  $\gamma \in 
  \operatorname{CH}^{n+1}(C\times_KX)_{\mathbb{Q}}$ 
  which induces a surjection
  $\gamma_*:J(C_{\mathbb C}) \twoheadrightarrow  J^{2n+1}_a(X_{\mathbb C})$.  
  Thus from the theory of the ($\mathbb C/\bar{K}$)-trace, 
  and the fact that  $J(C_{\mathbb C})= J(C_{\bar{K}})_{\mathbb C}$ is defined
  over $\bar K$,  $J^{2n+1}_a(X_{\mathbb C})$ descends to $\bar K$ as
  its ($\mathbb C/\bar{K}$)-trace $ \LKtrace J^{2n+1}_a(X_{\mathbb C})$, and 
  there is a surjective homomorphism of abelian varieties over $\bar K$
  $$
  \xymatrix{
   J(C_{\bar{K}}) \ar@{->>}[r]^<>(0.5){\LKtrace {\gamma_*}}&   \LKtrace
   J^{2n+1}_a(X_{\mathbb C}).
  }
  $$
  Moreover, the Abel--Jacobi map on torsion $AJ[N] :
  \operatorname{A}^{n+1}(X_{\cx})[N] \to J_a^{2n+1}(X_\cx)[N]$ is
  $\operatorname{Aut}(\cx/\bar K)$-equivariant for all positive integers $N$.
  Indeed,  $\operatorname{Aut}(\cx/\bar K)$ acts trivially on
  $J_a^{2n+1}(X_\cx)[N]=\LKtrace J^{2n+1}_a(X_{\mathbb C})[N]$ 
  and it also acts trivially on $\operatorname{A}^{n+1}(X_{\cx})[N]$ by Lecomte's
  rigidity theorem \cite{lecomte86} (see e.g.,  \cite[Thm.~3.8(b)]{ACMVdcg}).
 \end{proof}

 \subsection{Descending from $\bar K$ to $K$}\label{S:algdescent}
 
 In the notation of Theorem \ref{T:JacDesc}, we have found a smooth projective
 geometrically integral curve $C/K$, admitting a $K$-point, and a correspondence
 $\gamma \in \operatorname{CH}^{n+1}(C\times_KX)_{\mathbb{Q}}$ inducing 
 a surjective homomorphism of abelian varieties over $\bar K$
 \begin{equation}
 \label{E:Pdef}
 \xymatrix{
  0\ar[r]& P \ar[r]& J(C_{\bar{K}}) \ar@{->>}[r]^<>(0.5){\LKtrace {\gamma_*}}&  
  \LKtrace J^{2n+1}_a(X_{\mathbb C}) \ar[r] & 0
 }
 \end{equation}
 
 \noindent where $P$ is defined to be the kernel. 
 We will show that 
 $ \LKtrace J^{2n+1}_a(X_{\mathbb C})$ descends to an abelian variety $J$ over $K$ by showing that $P$
 descends to $K$, using the following
 elementary criterion\,:

 \begin{lem} \label{L:torsdescends}
  Let $A/K$ be an abelian variety over a perfect field $K$,  let $\Omega/K$
  be an algebraically closed extension field, and let $\bar A = A_{\Omega}$.
  Suppose that $\bar B
  \subset \bar A$ is a closed sub-group scheme.  Then $\bar B_{{\rm
    red}}$ descends to a sub-group scheme over $K$ if and only if, for
  each natural number $N$, we have $\bar B[N](\Omega)$ is stable under
  $\aut(\Omega/K)$.\qed
 \end{lem}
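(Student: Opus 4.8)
For the ``only if'' direction, suppose $\bar B_{\mathrm{red}}$ descends to a closed sub-group scheme $B\subseteq A$ over $K$. Then $B[N]=\ker(N\colon B\to B)$ is a closed subscheme of $A$ defined over $K$, so $B[N](\Omega)\subseteq A(\Omega)$ is preserved by $\aut(\Omega/K)$; and $\bar B[N](\Omega)=\bar B_{\mathrm{red}}[N](\Omega)=B[N](\Omega)$, since passing to the associated reduced subscheme does not change the set of $\Omega$-points and $N$-torsion is computed from the group law on $\Omega$-points.

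For the ``if'' direction, the plan is as follows. Since $\Omega$ is perfect, $\bar B_{\mathrm{red}}$ is a smooth closed sub-group scheme of the abelian variety $\bar A$; being closed in a proper scheme it is proper, hence an extension of a finite group by an abelian variety, and so its torsion subgroup $\bar B_{\mathrm{tors}}:=\bigcup_N \bar B[N](\Omega)$ is Zariski dense in $\bar B_{\mathrm{red}}$ (torsion prime to $\operatorname{char}(K)$ already suffices). The key observation is that the torsion points of $\bar A=A_\Omega$ all lie in $A(\bar K)$: for each $N$ the group scheme $A[N]$ is finite over $K$, so $A[N](\bar K)\to A[N](\Omega)$ is bijective; hence $\bar B_{\mathrm{tors}}\subseteq A(\bar K)$, compatibly with the $\aut(\Omega/K)$-action. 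Since the restriction map $\aut(\Omega/K)\to\aut(\bar K/K)=\gal(K)$ is surjective (extend an automorphism of $\bar K/K$ by the identity on a transcendence basis of $\Omega/\bar K$, then to the algebraic closure), the hypothesis now says that $\bar B_{\mathrm{tors}}\subseteq A(\bar K)$ is $\gal(K)$-stable.

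Let $B'\subseteq A_{\bar K}$ be the Zariski closure of $\bar B_{\mathrm{tors}}$ with its reduced structure; it is a $\gal(K)$-stable reduced closed subscheme, so by Galois descent (legitimate because $K$ is perfect, so $\bar K/K$ is Galois) it descends to a reduced closed subscheme $B\subseteq A$ over $K$. It then remains to identify $B_\Omega$ with $\bar B_{\mathrm{red}}$: both are reduced closed subschemes of $A_\Omega$; $B_\Omega=(B')_\Omega$ contains $\bar B_{\mathrm{tors}}$, and conversely each irreducible component of $B'$ is the closure of its intersection with $\bar B_{\mathrm{tors}}$, which stays Zariski dense after base change to the algebraically closed field $\Omega$ (a subset of $V(\bar K)$ dense in an irreducible $\bar K$-variety $V$ is dense in $V_\Omega$, by linear independence of regular functions against a $\bar K$-basis of $\Omega$); hence $(B')_\Omega$ is the Zariski closure of $\bar B_{\mathrm{tors}}$ in $A_\Omega$, namely $\bar B_{\mathrm{red}}$. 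That $B$ is a sub-group scheme of $A$ then follows by checking, after the faithfully flat base change $\Omega/K$, that the identity, inversion and multiplication of $A$ restrict to it. The step demanding the most care is this last identification together with the density statement feeding it: one needs both that $\bar B_{\mathrm{red}}$ is recovered as the Zariski closure of its torsion, so that nothing is lost in replacing $\bar B_{\mathrm{red}}$ by $\bar B_{\mathrm{tors}}$, and that this closure is insensitive to enlarging the algebraically closed base field, so that descending over $\bar K/K$ yields the correct $\Omega$-scheme; the remaining ingredients (density of torsion in an abelian variety, finiteness of $A[N]$ over $K$, surjectivity of $\aut(\Omega/K)\to\gal(K)$, and Galois descent of closed subschemes) are standard.
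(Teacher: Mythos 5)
Your proof is correct, but takes a genuinely different route from the paper's. The paper invokes the general descent criterion for the extension $\Omega/K$ directly (a subvariety of $A_\Omega$ descends to $K$ iff its $\Omega$-points are $\aut(\Omega/K)$-stable, valid since $\Omega^{\aut(\Omega/K)}=K$ when $K$ is perfect), notes that a Zariski-dense $\aut(\Omega/K)$-stable subset already suffices, and applies this with $S$ the torsion of $\bar B_{\mathrm{red}}$; the whole argument is about five lines plus a citation. You instead avoid the $\Omega/K$-descent criterion entirely: you observe that every torsion point of $A_\Omega$ already lies in $A(\bar K)$ (because each $A[N]$ is finite over $K$), use surjectivity of $\aut(\Omega/K)\twoheadrightarrow\gal(K)$ to transfer the stability hypothesis to a $\gal(K)$-action on $\bar B_{\mathrm{tors}}\subset A(\bar K)$, descend the Zariski closure $B'\subset A_{\bar K}$ to $K$ by classical Galois descent over $\bar K/K$, and then carefully verify that the base change $(B')_\Omega$ recovers $\bar B_{\mathrm{red}}$ via the lemma that a dense subset of $\bar K$-points of a reduced $\bar K$-variety stays dense after base change to $\Omega$. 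Your approach is longer and requires this last density-under-base-change step (which the paper's one-step argument over $\Omega$ sidesteps), but it is more self-contained, resting only on finite-Galois descent and elementary facts about torsion, rather than on the less commonly cited $\aut(\Omega/K)$-descent machinery. You also supply the routine ``only if'' direction, which the paper omits.
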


\begin{proof}
It is well-known that, since the fixed field of $\Omega$ under $\aut(\Omega/K)$ is $K$
itself, a subvariety $W$ of $\bar A$ descends to $K$ if and only if
$W(\Omega)$ is stable under $\aut(\Omega/K)$ (e.g.,
\cite[Prop.~6.8]{milneAG}).  In fact, to show $W$ descends to $K$ it
suffices to verify that there is a 
Zariski-dense subset $S \subset W(\Omega)$ which is stable under
$\aut(\Omega/K)$. (Indeed, if $\sigma \in \aut(\Omega/K)$, then $W^\sigma$ contains the Zariski closure of $S^\sigma$, which is $W$ itself.)
Now use the fact that, over an algebraically closed
field, torsion points are Zariski dense in any abelian variety or
\'etale group scheme.
\end{proof}

 \begin{proof}[Proof of Theorem \ref{T:JacDesc}, Step 2\,:
  $\LKtrace{J}^{2n+1}_a(X_{\mathbb C})$ descends  to $K$.]
  We wish to show that the abelian variety $\LKtrace{J}^{2n+1}_a(X_{\mathbb C})$
  over $\bar  K$, obtained in Step 1 of the proof, descends to an abelian variety
  over $K$.  
  In the notation of  Step 1, let $P$ be
  the kernel of $\LKtrace{\gamma_*}$, as in \eqref{E:Pdef}.
  We use the criterion of Lemma \ref{L:torsdescends} to show that $P$
  descends to $K$.  
  To this end, let $N$ be a natural number.
  We have a commutative diagram of abelian
  groups\,:

\begin{equation}\label{E:DescBigD}
\vcenter{\xymatrix@R=.6em@C=1em{
&P[N] \ar@{^(->}[dd]\\
P_\cx[N] \ar@{=}[ur] \ar@{^(->}[dd] &\\
& J(C_{\bar K})[N] \ar[rr]^{\simeq} \ar@{->}'[d]^{\LKtrace \gamma_{*,N}}[dd]&&H^1_{\text{\'et}}(C_{\bar K},\mmu_N) \ar[dd]^{\gamma_{*,N}} \\
J(C_\cx)[N] \ar[dd]_{\gamma_{*,N}} \ar@{=}[ur]\ar[rr]_(0.6){\simeq} &&
H^1_{\text{an}}(C_\cx,\mmu_N)\ar@{=}[ur] \ar[dd]_{\gamma_{*,N}}\\
& \LKtrace{J_a}^{2n+1}(X_\cx)[N]&&H^{2n+1}_{\text{\'et}}(X_{\bar K},\mmu_N^{\otimes(n+1)}) \\
J_a^{2n+1}(X_\cx)[N] \ar@{=}[ur] \ar@{^(->}[r] & J^{2n+1}(X_\cx)[N]
\ar@{=}[r ] & H^{2n+1}_{\text{an}}(X_\cx,\mmu_N^{\otimes(n+1)})\ar@{=}[ur]
}}
\end{equation}

\noindent Here, the identification $J^{2n+1}(X_\cx)[N]= H^{2n+1}_{\text{an}}(X_\cx,\mmu_N^{\otimes(n+1)})$ is  given by the  definition of the intermediate Jacobian $J^{2n+1}(X_\cx)$, since $H^{2n+1}_{\text{an}}(X_\cx,\mmu_N^{\otimes(n+1)})=H^{2n+1}_{\text{an}}(X_\cx,\mathbb Z/n\mathbb Z)=H^{2n+1}_{\text{an}}(X_\cx,\frac{1}{n}\mathbb Z/\mathbb Z)$.
   The key point then is that, by commutativity,  the composition of 
  arrows along the back of the diagram
  \begin{equation}\label{E:back}
  \xymatrix{
   J(C_{\bar K})[N]   \ar[r]^<>(0.5)\simeq &  H_{\text{\'et}}^1(C_{\bar
    K},\mmu_{N})   \ar[r]^<>(0.5){\gamma_{*,N}} &
   H_{\text{\'et}}^{2n+1}(X_{\bar K},\mmu_{N}^{\otimes {(n+1)}}) 
  }
  \end{equation}
  has the same kernel as the  arrow
  $\LKtrace \gamma_{*,N}$, namely $P[N]$.  Moreover, 
  each arrow of the composition \eqref{E:back}
  is  $\operatorname{Gal}(K)$-equivariant.
  Therefore, $P[N] = \ker \LKtrace \gamma_{*,N}$ is
  $\gal(K)$-stable for each $N$, and $P$ descends to~$K$. Therefore  the abelian variety $\LKtrace{J}^{2n+1}_a(X_{\mathbb C})$
  over $\bar  K$ admits a model $J$ over $K$ such that the $\bar K$-homomorphism   $\LKtrace {\gamma_*} : J(C_{\bar{K}}) \to   \LKtrace
  	J^{2n+1}_a(X_{\mathbb C})$ descends to a $K$-homomorphism $f : J(C) \to J$.
 \end{proof}

 \subsection{The Abel--Jacobi map is Galois-equivariant on torsion}
 In the notation of Theorem \ref{T:JacDesc}, we have so far established that
 $J^{2n+1}_a(X_{\mathbb C})$ descends to an abelian variety $J$ over $K$.   We now
 wish to show that with respect to this given structure as a $K$-scheme, the
 Abel--Jacobi map on torsion 
\begin{equation}
\label{E:AJtorsion}
 \xymatrix{AJ :
 \operatorname{A}^{n+1}(X_{\cx})[N] \ar[r]& J^{2n+1}_a(X_\cx)[N] = J[N]}
\end{equation}
is
 $\operatorname{Aut}(\cx/K)$-equivariant.   In Step 1, we already showed
 that $AJ$ is $\operatorname{Aut}(\cx/\bar K)$-equivariant when restricted to
 torsion. Therefore, in order to conclude, it only remains to prove  that the map
 $\LKtrace{AJ} : \operatorname{A}^{n+1}(X_{\bar K})[N] \to
 J[N]$ is
 $\operatorname{Gal}(\bar K/K)$-equivariant.

 \vskip .2 cm 
 For future reference, we have the following elementary lemma. 
 
 \begin{lem}\label{L:ez-emma}
  Let $G$ be a group and let $A,B,C$ be  $G$-modules. Let
  $\phi : A \to B$ and $ \psi : B \to C$   be homomorphisms of abelian groups. We
  have\,:
  \begin{alphabetize}
   \item If $\phi$ is surjective and if $\phi$ and $\psi\circ \phi $ are
   $G$-equivariant,
   then $\psi$ is $G$-equivariant.
   \item If $\psi $ is injective and if $\psi$ and $\psi\circ \phi $ are
   $G$-equivariant, then
   $\phi$ is $G$-equivariant.  \qed
  \end{alphabetize} 
 \end{lem}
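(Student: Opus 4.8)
The plan is to prove both parts by a direct element chase, being careful only about the order in which the hypotheses are consumed; since $A,B,C$ are $G$-modules and $\phi,\psi$ are homomorphisms of abelian groups, nothing beyond $G$-equivariance and (sur/in)jectivity enters.

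First I would handle part (a). Fix $g\in G$ and $b\in B$, and use surjectivity of $\phi$ to choose $a\in A$ with $\phi(a)=b$. Then $\psi(g\cdot b)=\psi(g\cdot\phi(a))=\psi(\phi(g\cdot a))$, the second equality being $G$-equivariance of $\phi$; and $\psi(\phi(g\cdot a))=(\psi\circ\phi)(g\cdot a)=g\cdot(\psi\circ\phi)(a)=g\cdot\psi(b)$ by $G$-equivariance of $\psi\circ\phi$. Comparing the two ends gives $\psi(g\cdot b)=g\cdot\psi(b)$, which is exactly $G$-equivariance of $\psi$.

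Next, part (b). Fix $g\in G$ and $a\in A$; the goal is $\phi(g\cdot a)=g\cdot\phi(a)$, and since $\psi$ is injective it suffices to verify this identity after applying $\psi$. On one hand $\psi(\phi(g\cdot a))=(\psi\circ\phi)(g\cdot a)=g\cdot(\psi\circ\phi)(a)=g\cdot\psi(\phi(a))$ by $G$-equivariance of $\psi\circ\phi$; on the other hand $\psi(g\cdot\phi(a))=g\cdot\psi(\phi(a))$ by $G$-equivariance of $\psi$. Hence $\psi(\phi(g\cdot a))=\psi(g\cdot\phi(a))$, and injectivity of $\psi$ gives $\phi(g\cdot a)=g\cdot\phi(a)$.

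The hard part, such as it is, is merely bookkeeping: in (a) one must push $g$ past $\phi$ first (using that $\phi$ is both equivariant and surjective) before invoking equivariance of $\psi\circ\phi$, whereas in (b) one instead postcomposes with $\psi$ at the outset and defers injectivity of $\psi$ to the final step. There is no substantive obstacle here; the statement is purely formal, which is why it is recorded without a written-out argument.
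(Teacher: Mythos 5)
Your proof is correct, and the paper itself gives no argument for this lemma (it is stated with an immediate \qed, signaling that it is regarded as a purely formal verification). Your element chase is exactly the argument one would supply.
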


 \begin{proof}[Proof of Theorem \ref{T:JacDesc}, Step 3\,: The Abel--Jacobi map
  is equivariant on torsion] Fix  $J/K$ to be the   model of
  $J^{2n+1}_a(X_{\mathbb C})$ from Step 2.  We wish to show that for any positive
  integer $N$,  the restriction \eqref{E:AJtorsion} of the Abel--Jacobi map to $N$-torsion  is $\aut(\cx/K)$-equivariant.
  As mentioned above,  it only remains to prove  that the map $\LKtrace{AJ} :
  \operatorname{A}^{n+1}(X_{\bar K})[N] \to \LKtrace{J}^{2n+1}_a(X_\cx)[N]$ is
  $\operatorname{Gal}(\bar K/K)$-equivariant.
  
  For this, observe that the Bloch map $\lambda^{n+1} : 
  \operatorname{A}^{n+1}(X_{\bar K})[N] \longrightarrow
  H_{\text{\'et}}^{2n+1}(X_{\bar K}, \mmu_N^{\otimes (n+1)})$
  is Galois-equivariant, since it is constructed via natural maps of sheaves, all
  of which have natural Galois actions.  Moreover, on torsion the
  Bloch map factors through the Abel--Jacobi map. Indeed, over $\cx$,
  we have \cite[Prop.~3.7]{bloch79}
  $$
  \xymatrix@C=3em {
   \lambda^{n+1} :  \operatorname{A}^{n+1}(X_{\cx})[N] \ar[r]^<>(0.5){{AJ}[N]} &  
   J_{\cx}[N] \ar@{^(->}[r]& H_{\text{\'et}}^{2n+1}(X_{\cx}, \mmu_N^{\otimes
    (n+1)}).
  }
  $$
    Using rigidity for torsion cycles
\cite{lecomte86}, rigidity for torsion on abelian varieties, and
proper base change, we obtain the analogous statement over $\bar K$\,:
  $$
  \xymatrix@C=3em {
   \lambda^{n+1} :  \operatorname{A}^{n+1}(X_{\bar K})[N] \ar[r]^<>(0.5){\LKtrace
    {AJ}[N]} &  
   J_{\bar K}[N] \ar@{^(->}[r]& H_{\text{\'et}}^{2n+1}(X_{\bar K}, \mmu_N^{\otimes
    (n+1)}).
  }
  $$
  As described in \eqref{E:DescBigD},  the inclusion $J_{\bar K}[N]
  \hookrightarrow H_{\text{\'et}}^{2n+1}(X_{\bar
   K}, \mmu_N^{\otimes (n+1)})$ is also Galois-equivariant. By Lemma
  \ref{L:ez-emma}(b), we find that $
  \LKtrace{AJ}[N]$ is Galois-equivariant.
 \end{proof}

 \subsection{The Galois representation}
 We now conclude the proof of Theorem \ref{T:JacDesc} by constructing the
 correspondence $\Gamma\in \operatorname{CH}^{\dim (J)+n}(J\times_K X)$ inducing
 the desired morphism of Galois representations.  
 
 \begin{proof}[Proof of Theorem \ref{T:JacDesc}, Step 4\,: The Galois
  representation]
  Let $J/K$ be the model of $J^{2n+1}_a(X_{\mathbb C})$ from Step 2.
   (which was
  shown to be distinguished in Step 3\,; see Remark \ref{R:Distinguished}).   
  We will now construct a correspondence $\Gamma\in \operatorname{CH}^{\dim
   (J)+n}(J\times_K X)$ such that for each prime number $\ell$,  the correspondence
  $\Gamma$ induces an  inclusion of $\operatorname{Gal}(K)$-representations  
  $$
  \xymatrix{ 
   H^1(J_{\bar K},\mathbb Q_\ell) \ar@{^(->}[r]^<>(0.5){\Gamma_*}&
   H^{2n+1}(X_{\bar K},\mathbb Q_\ell(n)),
  }$$
  with image $\operatorname{N} ^n H^{2n+1}(X_{\bar K},\mathbb Q_\ell(n))$.

  Let $C$ and $\gamma \in \operatorname{CH}^{n+1}(C\times_K
  X)_{\mathbb{Q}}$ be the smooth, geometrically integral, pointed projective curve and the
  correspondence provided by Proposition \ref{P:niveauK}. As we have seen (in Steps~1 and 2 of the
  proof of Theorem \ref{T:JacDesc}), $\gamma$ induces a surjective homomorphism of
  complex abelian varieties $J(C_\cx) \to J_a^{2n+1}(X_\cx)$ that descends to a
  homomorphism $f : J(C) \to J$ of abelian varieties defined over $K$. Consider
  then the composite morphism
  \begin{equation}\label{E:cycles4-5'}
  \xymatrix{
   H^1(J_{\bar K},\mathbb Q_\ell) \ar@{^(->}[r]^<>(0.5){f^*}& H^1(J(C)_{\bar
    K},\mathbb Q_\ell)  \ar[r]^<>(0.5){\operatorname{alb}^*}_<>(0.5)\simeq
   &H^1(C_{\bar K},\mathbb Q_\ell) \ar[r]^<>(0.5){\gamma_*}&H^{2n+1}(X_{\bar
    K},\mathbb Q_\ell(n)),
  }
  \end{equation}
  where $\operatorname{alb} : C \to J(C)$ denotes the Albanese morphism induced by
  the $K$-point of $C$. This morphism is clearly injective and induced by a
  correspondence on $J\times_K X$, and we claim that its image is 
  $\operatorname{N} ^n H^{2n+1}(X_{\bar K},\mathbb
  Q_\ell(n))$. Indeed, the complexification of \eqref{E:cycles4-5'} together with
  the comparison isomorphisms yields a diagram
  \begin{equation}\label{E:cycles4-5''}
  \xymatrix{
   H^1(J_{a}^{2n+1}(X_\cx),\mathbb Q) \ar@{^(->}[r]^<>(0.5){(f_\cx)^*}&
   H^1(J(C)_{\cx},\mathbb Q) 
   \ar[r]^<>(0.5){(\operatorname{alb}_\cx)^*}_<>(0.5)\simeq &H^1(C_{\cx},\mathbb Q)
   \ar[r]^<>(0.5){(\gamma_\cx)_*}&H^{2n+1}(X_{\cx},\mathbb Q(n)),
  }
  \end{equation}
  where $(\operatorname{alb}_\cx)^*\circ (f_\cx)^*$ is easily seen to be the dual
  (via the natural choice of polarizations) of $(\gamma_\cx)_*$. Since the Hodge
  structure $H^1(C_{\cx},\mathbb Q)$ is polarized by the cup-product, we conclude
  by  \cite[Lemma 2.3]{ACMVdcg} that the image of \eqref{E:cycles4-5''} is equal
  to the image of $(\gamma_\cx)_*$, that is, to $\operatorname{N} ^n
  H^{2n+1}(X_{\cx},\mathbb
  Q(n))$.  Invoking the comparison isomorphism settles the claim.
  
  This completes the proof of Theorem \ref{T:JacDesc}.
 \end{proof}

 \section{Proof of Theorem \ref{T:main}\,: Part  II, regular homomorphisms and torsion points} \label{S:Tors-Gen} 
  
In order to upgrade Theorem \ref{T:JacDesc} to a statement
  about equivariance for arbitrary cycle classes, we reconsider and
  extend the theory of \emph{regular homomorphisms}. 
  Given a smooth projective complex variety $X$, a fundamental result of Griffiths
 \cite{griffiths} (and also \cite[p. 826]{griffiths68}) is that the Abel--Jacobi map   ${AJ}:\operatorname{A}^{n+1}(X)
 \longrightarrow J_a^{2n+1}(X)$ is a {regular homomorphism}. 
  This means that for every pair $(T,Z)$ with $T$ a pointed
 smooth integral complex
 variety,
 and
 $Z\in \operatorname{CH}^i(T\times X)$,  
 the composition 
 $$
 \begin{CD}
 T(\cx)@> w_Z >> \operatorname{A}^i(X)@>\phi >>J_a^{2n+1}(X)
 \end{CD}
 $$ 
 is induced by a morphism of complex varieties $\psi_Z:T\to J_a^{2n+1}(X)$,
 where, if $t_0\in T(\cx)$
 is
 the base point of $T$,  $w_Z:T(\cx)\to \operatorname{A}^i(X)$ is   given by
 $t\mapsto Z_t-Z_{t_0}$\,; here $Z_t$ is the refined Gysin fiber. Likewise, one
 defines regular homomorphisms for smooth projective varieties defined over an
 arbitrary algebraically closed field.
 We direct the reader to
 \cite[\S 3]{ACMVdcg} for a review of the material we use here on
 regular homomorphisms and \emph{algebraic representatives}, and to
 \cite[\S 4]{ACMVdcg} for the notion of a \emph{Galois-equivariant
  regular homomorphism}.   In this section we provide some results
regarding equivariance of regular homomorphisms\,; the main results are Propositions \ref{P:Tors-Gen'} and \ref{P:Tors-Gen}.

\subsection{Preliminaries}
We will  utilize the  following facts\,:
 
 \begin{pro}[{\cite[Thm.~2]{ACMV}}] \label{P:algcycles}
   Let $X/K$ be a  scheme of finite type   
  over a 
  perfect
  field~$K$.
  If $\alpha \in \operatorname{CH}^i(X_{\bar K})$
  is algebraically trivial, then there exist
  an abelian variety $A/K$,    a cycle $Z \in \operatorname{CH}^i( A\cross_K   
  X)$,  and a 
  $\bar K$-point $t\in 
  A(\bar K)$   such that $\alpha = Z_{t}-Z_{0}$.  \qed
 \end{pro}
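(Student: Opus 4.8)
The plan is to reduce, over $\bar K$, to the case where the algebraic triviality of $\alpha$ is parametrized by an abelian variety, then to spread the resulting data out over a finite extension $L/K$, and finally to descend from $L$ to $K$ by Weil restriction of scalars---at the cost of a base point in $A(\bar K)$ which need not be $K$-rational. For the first step I would use the classical theory of algebraic equivalence: triviality of $\alpha$ over $\bar K$ is witnessed by a smooth projective connected curve $C_0/\bar K$ (Bertini \cite{poonen}), say $\alpha=(W_0)_{c_1}-(W_0)_{c_0}$ with $W_0\in\operatorname{CH}^i(C_0\times_{\bar K}X_{\bar K})$ and $c_0,c_1\in C_0(\bar K)$; moreover the induced map $c\mapsto(W_0)_c-(W_0)_{c_0}$ from $C_0(\bar K)$ to $\operatorname{A}^i(X_{\bar K})$ factors through the Abel map $C_0\to J(C_0)$ via a cycle, so that $\alpha=(V_0)_{s_1}-(V_0)_0$ for the abelian variety $\mathcal B_0:=J(C_0)$ over $\bar K$, a class $V_0\in\operatorname{CH}^i(\mathcal B_0\times_{\bar K}X_{\bar K})$, and a point $s_1\in\mathcal B_0(\bar K)$. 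Since $X$ is of finite type over $K$ and $\bar K$ is the union of its finite subextensions (which are separable, as $K$ is perfect), this configuration descends to some finite $L/K$: enlarging $L$ successively, one obtains an abelian variety $\mathcal B/L$, a class $V\in\operatorname{CH}^i(\mathcal B\times_L X_L)$, and a point $s\in\mathcal B(L)$ with $\alpha=(V_{\bar K})_s-(V_{\bar K})_0$, where $V_{\bar K}$ and $s$ are obtained by base change along the fixed embedding $L\hookrightarrow\bar K$. (Here the compatibility of refined Gysin fibers with the closed embeddings $\{c\}\hookrightarrow C_0$ and with the Abel map is what makes the reduction to a curve and its Jacobian harmless.)

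Now set $A:=\operatorname{Res}_{L/K}\mathcal B$; since $L/K$ is finite separable and $\mathcal B$ is an abelian variety, $A$ is an abelian variety over $K$, and there is a counit homomorphism $\epsilon\colon A_L\to\mathcal B$ of abelian varieties over $L$. As $\epsilon$ is a morphism of smooth $L$-varieties it is lci, hence so is $\epsilon\times\operatorname{id}_{X_L}$, and one may form $(\epsilon\times\operatorname{id}_{X_L})^*V\in\operatorname{CH}^i(A_L\times_L X_L)$. Using the canonical isomorphism $A_L\times_L X_L\cong(A\times_K X)_L$ of $K$-schemes, I push this class forward along the finite flat structure morphism $\pi\colon(A\times_K X)_L\to A\times_K X$ to get $Z:=\pi_*\big((\epsilon\times\operatorname{id}_{X_L})^*V\big)\in\operatorname{CH}^i(A\times_K X)$; finiteness of $\pi$ makes $\pi_*$ preserve codimension. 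For the base point, recall the standard facts that $A_{\bar K}\cong\prod_{\sigma\colon L\hookrightarrow\bar K}\mathcal B_\sigma$ (with $\mathcal B_\sigma:=\mathcal B\otimes_{L,\sigma}\bar K$) and that the base change of $\epsilon$ along a given $\sigma$ is the projection $\operatorname{pr}_\sigma\colon\prod_\tau\mathcal B_\tau\to\mathcal B_\sigma$: let $t\in A(\bar K)$ be the point whose component at the tautological embedding $\sigma_1\colon L\hookrightarrow\bar K$ equals $s$ and whose other components are $0$.

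It then remains to check that $Z_t-Z_0=\alpha$. Base-changing everything to $\bar K$, the morphism $\pi$ becomes the fold map $\coprod_\sigma(A_{\bar K}\times X_{\bar K})\to A_{\bar K}\times X_{\bar K}$, so that $Z_{\bar K}=\sum_\sigma(\operatorname{pr}_\sigma\times\operatorname{id})^*\big(V\otimes_{L,\sigma}\bar K\big)$. Since pulling a class back along $\operatorname{pr}_\sigma\times\operatorname{id}$ and then restricting to the fiber over a point $a\in A(\bar K)$ computes the fiber over $\operatorname{pr}_\sigma(a)$, one gets $Z_a=\sum_\sigma\big(V\otimes_{L,\sigma}\bar K\big)_{\operatorname{pr}_\sigma(a)}$; evaluating at $a=t$ and at $a=0$ and cancelling the summands with $\sigma\neq\sigma_1$ (which coincide in the two cases), one is left with $Z_t-Z_0=\big(V\otimes_{L,\sigma_1}\bar K\big)_s-\big(V\otimes_{L,\sigma_1}\bar K\big)_0=(V_{\bar K})_s-(V_{\bar K})_0=\alpha$, as required.

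The main obstacle is not any single deep ingredient but the bookkeeping in the last two paragraphs: keeping track of how base change along the possibly non-Galois extension $L/K$ decomposes $A_{\bar K}$ as a product of Galois twists of $\mathcal B$, how the counit $\epsilon$ and the finite pushforward $\pi_*$ interact with that decomposition, and---already in the reduction step---the compatibility of refined Gysin fibers with the passage to a curve and then to its Jacobian. One must also keep straight the formal properties (compatibility with flat pullback, proper pushforward, and composition) of the lci pullback $(\epsilon\times\operatorname{id})^*$ and of the refined Gysin fibers $(-)_t$ of Fulton's intersection theory; these are exactly what make the computation above go through.
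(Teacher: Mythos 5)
Your proof is correct, but it takes a genuinely different and much longer route than the paper's. The paper treats Proposition~\ref{P:algcycles} as an almost immediate consequence of its companion result \cite[Thm.~2]{ACMV}, which already produces an abelian variety $A'/K$, a cycle $Z'$ on $A'\times_K X$, and \emph{two} base points $t_1,t_0\in A'(\bar K)$ with $\alpha=Z'_{t_1}-Z'_{t_0}$; the only work left in the paper is the ``doubling'' trick $A:=A'\times_K A'$, $Z:=p_{13}^*Z'-p_{23}^*Z'$, which converts a pair of base points into a single one (the other being the origin). You instead reprove the substance of the cited theorem from scratch: reduce over $\bar K$ to a Jacobian via the transpose-graph correspondence of the Abel map, spread out to a finite separable extension $L/K$ (where perfectness of $K$ is used), and descend to $K$ via Weil restriction $A=\operatorname{Res}_{L/K}\mathcal B$, computing the refined Gysin fibers of $Z=\pi_*((\epsilon\times\operatorname{id})^*V)$ component by component along the decomposition $A_{\bar K}\cong\prod_\sigma\mathcal B_\sigma$. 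A bonus of your construction is that the Jacobian reduction already yields $0$ as one base point, so the doubling trick is not needed. The trade-off is that you are silently redoing the real content of \cite[Thm.~2]{ACMV}; within the scope of this paper, simply citing it (as the authors do) is the more efficient and intended argument, but your self-contained version is sound modulo the standard compatibilities of lci/flat pullback and finite pushforward with refined Gysin fibers that you flag at the end.
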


 \begin{proof}
 We have shown  in \cite[Thm.~2]{ACMV} that 
 there exist
  an abelian variety $ A'/K$,  a cycle $  Z'$ on $ A'\cross_K 
  X$, and a pair of
  $\bar K $-points $t_1,t_0 \in  A'(\bar K)$   such that $\alpha = Z'_{t_1} -
 Z'_{t_0}$.  
  Let  $p_{13},p_{23}:  A'\cross_{K}   A' \cross_{K}  X
 \to    A' \cross_{K}   X$ be the obvious projections.
 Let ${Z}$ be defined as the cycle  
 $
 {Z}:= p_{13}^*Z'-p_{23}^*Z' 
 $ 
 on $ A'\times_{K}   A' \times_{K}  X$.
 For points $t_1,t_0\in  \underline A'({\bar K})$, 
  we have $
 {Z}_{(t_1,t_0)} = Z'_{t_1} - Z'_{t_0}$.  Thus setting $ A= A'\times_K A'$, we
 are done.
  \end{proof}

\begin{lem}\label{L:Tor-to-Tor}
Let $X$ be a scheme of finite type over an algebraically closed field $k$, and  let $A/k$ be an abelian variety.  
\begin{alphabetize}
\item Let  $Z\in \operatorname{CH}^i(A\times_k X)$.  The  
  map $w_Z : A(k) \to \operatorname{A}^i(X)$ is a homomorphism on torsion\,; more precisely, for each natural number $N$, $w_Z$ restricted to $A(k)[N]$ gives a homomorphism $w_Z[N]:A(k)[N]\to \operatorname{A}^i(X)[N]$.

\item Let $\phi:\operatorname{A}^i(X)\twoheadrightarrow A(k)$ be a surjective regular homomorphism.  There exists a natural number $r$ such that for any natural number $N$ coprime to $r$, $\phi$ is surjective on $N$-torsion\,; i.e., $\phi[N]:\operatorname{A}^i(X)[N]\twoheadrightarrow  A(k)[N]$ is surjective.  
\end{alphabetize}

\end{lem}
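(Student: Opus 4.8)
I will treat part~(a) first, since part~(b) builds on it.

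\emph{Part (a).} The plan is to factor $w_Z$ through $0$-cycles on $A$. Let $\theta\colon A(k)\to \operatorname{CH}_0(A)_{\mathrm{hom}}=\operatorname{A}^{\dim A}(A)$ be the map $t\mapsto [t]-[0]$, and let $Z_*\colon \operatorname{A}^{\dim A}(A)\to \operatorname{A}^i(X)$ be the group homomorphism $\alpha\mapsto \operatorname{pr}_{X*}(Z\cdot \operatorname{pr}_A^*\alpha)$ induced by the correspondence $Z$. Then $w_Z=Z_*\circ\theta$, because the refined Gysin fibre $Z_t$ equals $\operatorname{pr}_{X*}(Z\cdot \operatorname{pr}_A^*[t])=Z_*([t])$. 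Since $Z_*$ is additive and carries $N$-torsion to $N$-torsion, it suffices to show that $\theta$ restricts to a homomorphism $A(k)[N]\to \operatorname{CH}_0(A)_{\mathrm{hom}}[N]$. For this I would use two facts about $0$-cycles on the abelian variety $A$: first, that $[t]-[0]$ is $N$-torsion in $\operatorname{CH}_0(A)$ whenever $t\in A[N]$; and second, Roitman's theorem in the form that the Albanese map restricts to an isomorphism $\operatorname{CH}_0(A)_{\mathrm{hom}}[N]\xrightarrow{\sim}A(k)[N]$. Since $\operatorname{alb}_A\circ\theta=\operatorname{id}_{A(k)}$ and, by the first fact, $\theta$ lands in the torsion subgroup, $\theta|_{A[N]}$ is the inverse of this isomorphism; in particular it is a homomorphism with image $\operatorname{CH}_0(A)_{\mathrm{hom}}[N]$. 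Composing with $Z_*$ gives~(a).

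\emph{Part (b).} By Proposition~\ref{P:algcycles}, every element of $\operatorname{A}^i(X)$ has the form $w_Z(b)$ for some abelian variety $B/k$, some $Z\in \operatorname{CH}^i(B\times_k X)$, and some $b\in B(k)$. For such a pair $(B,Z)$ the composite $\phi\circ w_Z\colon B(k)\to A(k)$ is, by the definition of a regular homomorphism applied to the pointed smooth integral variety $B$, induced by a morphism of varieties $g_Z\colon B\to A$; as $g_Z$ sends the origin to the origin, the rigidity lemma makes $g_Z$ a homomorphism of abelian varieties, and I put $A_Z:=g_Z(B)\subseteq A$. Since $\phi$ is surjective and the images of the $w_Z$ exhaust $\operatorname{A}^i(X)$, we have $\bigcup_{(B,Z)}A_Z(k)=A(k)$. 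Moreover the family $\{A_Z\}$ of abelian subvarieties of $A$ is closed under addition: for $(B_1,Z_1)$ and $(B_2,Z_2)$ the cycle $Z_1\boxplus Z_2:=\operatorname{pr}_{13}^*Z_1+\operatorname{pr}_{23}^*Z_2$ on $B_1\times B_2\times X$ satisfies $w_{Z_1\boxplus Z_2}(b_1,b_2)=w_{Z_1}(b_1)+w_{Z_2}(b_2)$, hence $A_{Z_1\boxplus Z_2}=A_{Z_1}+A_{Z_2}$. Choosing $(B_0,Z_0)$ with $\dim A_{Z_0}$ maximal, closedness under addition forces $A_Z\subseteq A_{Z_0}$ for all $(B,Z)$, so $A_{Z_0}(k)=A(k)$ and $g_0:=g_{Z_0}\colon B_0\twoheadrightarrow A$ is a surjective homomorphism of abelian varieties.

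To conclude, let $r$ be the product of $\operatorname{char}(k)$ (taken to be $1$ in characteristic zero) and the order of the component group of $\ker g_0$. For $N$ coprime to $r$ the snake lemma applied to $0\to \ker g_0\to B_0\to A\to 0$ shows that $g_0\colon B_0(k)[N]\to A(k)[N]$ is surjective, the obstruction $(\ker g_0)(k)/N(\ker g_0)(k)$ vanishing because $B_0(k)$ is divisible, the identity component of $\ker g_0$ has divisible group of $k$-points, and the component group of $\ker g_0$ has order prime to $N$. Then, given $t\in A(k)[N]$, choose $b\in B_0(k)[N]$ with $g_0(b)=t$; by part~(a), $w_{Z_0}(b)\in \operatorname{A}^i(X)[N]$, and $\phi(w_{Z_0}(b))=g_0(b)=t$. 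Thus $\phi[N]\colon \operatorname{A}^i(X)[N]\to A(k)[N]$ is surjective.

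The only non-formal ingredient is the first fact used in~(a): that $[t]-[0]$ is $N$-torsion in $\operatorname{CH}_0(A)$ when $t\in A[N]$. It is immediate when $A$ is a curve, where $\operatorname{CH}_0$ of degree $0$ is literally $\operatorname{Pic}^0$, but for higher-dimensional $A$ it is genuine content of the theory of torsion $0$-cycles on abelian varieties (Roitman's theorem; see \cite{bloch79}), and I expect this to be the main point to pin down. In~(b), the step requiring an argument rather than a citation is the extraction of a \emph{single} pair $(B_0,Z_0)$ with $g_0$ surjective — done above by the maximal-dimension argument, which deliberately avoids any assumption on the existence of an algebraic representative for $\operatorname{A}^i(X)$ — together with the elementary identification of the finitely many primes (those dividing $\operatorname{char}(k)$ or the component group of $\ker g_0$) that must be excluded.
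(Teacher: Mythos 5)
Your part (a) follows the paper's route essentially verbatim: factor $w_Z$ through $\tau\colon A(k)\to\operatorname{A}_0(A)$, $t\mapsto[t]-[0]$, and invoke the Bloch--Roitman result that $\tau$ restricts to an isomorphism on torsion (the paper quotes this via \cite[Prop.~11, Lem.~p.259]{beauvillefourier}; your honest flagging of the nontrivial fact that $[t]-[0]$ is torsion is precisely the content of that citation). Part (b), however, is a genuinely different argument. The paper appeals to Murre's \cite[Cor.~1.6.3]{murre83} (restated as \cite[Lem.~4.9]{ACMVdcg}), which produces a single cycle $Z$ on $A\times_k X$ with $\phi\circ w_Z=r\cdot\operatorname{Id}_A$; surjectivity of $\phi[N]$ for $N$ coprime to $r$ is then immediate from part (a). You instead manufacture a surjection $g_0\colon B_0\twoheadrightarrow A$ from scratch using only Proposition \ref{P:algcycles}: the abelian subvarieties $A_Z=g_Z(B)$ exhaust $A(k)$, are closed under addition via the external sum $Z_1\boxplus Z_2$, so a maximal-dimension choice of $(B_0,Z_0)$ must already be onto; the snake lemma on $0\to\ker g_0\to B_0\to A\to 0$ then isolates the finitely many bad primes as those dividing $\#\pi_0\bigl((\ker g_0)_{\mathrm{red}}\bigr)$. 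Both proofs are correct. The paper's is shorter but leans on Murre's lemma as a black box; yours is more self-contained and in effect inlines a weak form of that lemma (existence of a dominating pair without the refinement $B_0=A$, $g_0=r\cdot\operatorname{Id}_A$), at the cost of a less explicit $r$. One small remark: multiplication by $p=\operatorname{char}(k)$ is surjective on $k$-points of an abelian variety over an algebraically closed field, so you need not throw $p$ into $r$ for the snake-lemma step to go through; the only primes that genuinely must be excluded are those dividing the component group of $(\ker g_0)_{\mathrm{red}}$, though including $p$ is of course harmless.
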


\begin{proof}
 (a) Since $w_Z$ factors as
  $A(k) \stackrel{\tau}{\longrightarrow} \operatorname{A}_0(A) \stackrel{Z_*}{\longrightarrow}
  \A^i(X)$,
  where $\tau(a):=[a]-[0]$, and  $Z_*$ is the group
  homomorphism induced by the action of the correspondence $Z$, it
  suffices to observe that  $\tau$ is a homomorphism on torsion.  
 In fact, $\tau$ is an isomorphism on torsion  
  \cite[Prop.~11, Lem.~p.259]{beauvillefourier} (which is based on  \cite[Thm.~(0.1)]{bloch76} and \cite{roitman80}).

(b) By \cite[Cor.~1.6.3]{murre83} (see also \cite[Lem.~4.9]{ACMVdcg}) there exists a $Z\in \operatorname{CH}^i(A\times_k X)$ so that the composition $\psi_Z:A(k) \stackrel{w_Z}{\longrightarrow} \A^i(X) \stackrel{\phi}{\longrightarrow}A(k)$ is induced by $r\cdot \operatorname{Id}_A$ for some integer $r$.  Let $N$ be any natural number coprime to $r$.
  Then $\psi_Z[N]$ is surjective, and therefore it  follows from (a) that $\phi[N]$ is surjective.
\end{proof}

\begin{rem}
Note that the proof of Lemma \ref{L:Tor-to-Tor}(b) actually shows that for all $N$,
we have a surjection $\A^i(X)[rN] \to A(k)[N]$. In particular, a surjective regular homomorphism $\phi:\operatorname{A}^i(X)\twoheadrightarrow A(k)$ (\emph{e.g.} the Abel--Jacobi map) induces a surjective homomorphism $\operatorname{A}^i(X)_{tors}\twoheadrightarrow A(k)_{tors}$ on torsion. 
\end{rem}

\subsection{Algebraically closed base change and equivariance of  regular homomorphisms}  
In this section we will utilize traces for algebraically closed field
extensions in arbitrary characteristic.  The main results of this
paper focus on the characteristic $0$ case, which we reviewed in
\S\ref{S:rigidity}.   The properties of the trace that  we utilize
here in positive characteristic are reviewed in \cite[\S
3.3.1]{ACMVdcg}\,;   the main difference is that we must potentially
keep track of some purely inseparable isogenies.   
 
  \begin{lem}\label{L:T-G-AC}
   Let $\Omega/k$ be an extension of algebraically closed fields, and let $X$ be a smooth projective variety over $k$. Let $A$ be an abelian variety over  $\Omega$ and let
   $\phi: \operatorname{A}^i(X_{\Omega})\to A(\Omega)$ be a surjective regular
   homomorphism.   Setting   $\tau : \LKtrace{A}_\Omega \to A$ to  be the $\Omega/k$-trace of $A$, 
 we have that $\tau$ is  a purely inseparable isogeny, which is an isomorphism in characteristic $0$.  Moreover, 
   there is a regular homomorphism $(\LKtrace
   \phi)_{\Omega}:\operatorname{A}^i(X_\Omega)\to \LKtrace A_\Omega
   (\Omega)$ making the following diagram commute
      \begin{equation} \label{E:comL}
 \vcenter{   \xymatrix@R=.5cm{
     \operatorname{A}^i(X_\Omega) \ar[r]^<>(0.5){(\LKtrace{\phi})_{\Omega}} \ar@{=}[d] & \LKtrace{A}_\Omega(\Omega) \ar[d]_{\simeq}^{\tau(\Omega)}\\
     \operatorname{A}^i(X_\Omega)  \ar[r]^<>(0.5){\phi}&
     A(\Omega). }}
   \end{equation} 
  \end{lem}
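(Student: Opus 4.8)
The plan is to reduce everything to the universal property of the $\Omega/k$-trace, the key point being that a surjective regular homomorphism $\phi \colon \operatorname{A}^i(X_\Omega) \to A(\Omega)$ is essentially ``defined over $k$'' because the cycles witnessing its surjectivity already live on $X_k$. First I would recall from \cite[\S 3.3.1]{ACMVdcg} the basic properties of the trace $\tau \colon \LKtrace A_\Omega \to A$ in arbitrary characteristic: it is the terminal object among maps from $k$-abelian-varieties (base changed to $\Omega$) to $A$, it is injective with finite connected kernel, and in fact $\tau$ is a purely inseparable isogeny precisely when $A$ is ``dominated over $\Omega$ by an abelian variety defined over $k$'' in the appropriate sense; in characteristic $0$ there are no purely inseparable isogenies so $\tau$ is an isomorphism. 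The assertion that $\tau$ is an isogeny (rather than merely a closed immersion) will follow once I exhibit a surjection $B_\Omega \twoheadrightarrow A$ from an abelian variety $B/k$, since then $\tau$ is surjective, hence an isogeny, and the only isogenies factoring through the trace in this way are purely inseparable.

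The heart of the argument is the construction of such a $B/k$. Here I would invoke Proposition \ref{P:algcycles}: since $\phi$ is a regular homomorphism, choosing a point $a \in A(\Omega)$ and writing $a = w_{Z}(t)$ is not quite available, but the right tool is Lemma \ref{L:Tor-to-Tor}(b) together with Murre's \cite[Cor.~1.6.3]{murre83} (equivalently \cite[Lem.~4.9]{ACMVdcg}): there is a cycle $Z \in \operatorname{CH}^i(A \times_\Omega X_\Omega)$ so that $\psi_Z = \phi \circ w_Z \colon A(\Omega) \to A(\Omega)$ is multiplication by a nonzero integer $r$. Spreading out, one replaces $Z$ and $A$ by data over $k$: more precisely, one uses that $\operatorname{A}^i(X_\Omega)$ is covered by the $w_{W}$ for $W \in \operatorname{CH}^i(B' \times_k X)$ with $B'/k$ an abelian variety (Proposition \ref{P:algcycles} applied with $K = k$), so that the composite $B'(\Omega) \xrightarrow{w_W} \operatorname{A}^i(X_\Omega) \xrightarrow{\phi} A(\Omega)$ is induced, after possibly modifying $B'$, by a morphism of $\Omega$-abelian varieties $B'_\Omega \to A$ whose image, as $W$ ranges over finitely many such cycles, is all of $A$ (surjectivity of $\phi$, plus that an abelian variety is the sum of finitely many images of such maps). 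Taking $B$ to be a product of these $B'$'s over $k$ gives a $k$-abelian variety with $B_\Omega \twoheadrightarrow A$, as desired; by the universal property this surjection factors through $\tau$, forcing $\tau$ to be an isogeny, and purely inseparable by the structure theory of the trace.

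Finally, to produce $(\LKtrace\phi)_\Omega$ fitting into \eqref{E:comL}: in characteristic $0$, $\tau$ is an isomorphism and one simply sets $(\LKtrace\phi)_\Omega = \tau(\Omega)^{-1}\circ\phi$; this is again a regular homomorphism because post-composing a regular homomorphism with a morphism of abelian varieties (here $\tau^{-1}$, a morphism defined over $\Omega$) yields a regular homomorphism. In positive characteristic one argues the same way after noting that $\tau(\Omega)$ is a bijection on $\Omega$-points (a purely inseparable isogeny induces an isomorphism on $\Omega$-points since $\Omega$ is algebraically closed, hence perfect), so $(\LKtrace\phi)_\Omega := \tau(\Omega)^{-1}\circ\phi$ makes sense as a map of groups; that it is a regular homomorphism follows since the composite $T(\Omega) \xrightarrow{w_Z} \operatorname{A}^i(X_\Omega) \xrightarrow{\phi} A(\Omega)$ is induced by a morphism $T \to A$, and pre-composing with the inverse-on-points of the isogeny $\tau$ is induced by the morphism $\LKtrace A_\Omega \to A$ in the sense needed (one uses that the defining morphisms factor through $\LKtrace A_\Omega$ by the universal property). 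I expect the main obstacle to be the positive-characteristic bookkeeping: verifying cleanly that $(\LKtrace\phi)_\Omega$ is genuinely a \emph{regular} homomorphism (not just a homomorphism on points) when $\tau$ is only a purely inseparable isogeny — this is where one must be careful that the universal property of the trace applies to the spread-out cycle data and not merely to abstract homomorphisms of abelian varieties.
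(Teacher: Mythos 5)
Your proof takes a genuinely different route from the paper's. The paper does not attempt to \emph{define} $(\LKtrace\phi)_\Omega$ as $\tau(\Omega)^{-1}\circ\phi$; it takes the regular homomorphism $(\LKtrace\phi)_\Omega$ already constructed in Step~1 of the proof of \cite[Thm.~3.7]{ACMVdcg}, and its $k$-form $\LKtrace\phi$ from Step~2 and the accompanying diagram \cite[(3.9)]{ACMVdcg}, and then the entire content of the lemma becomes the commutativity of the bottom square of a three-layer diagram. That commutativity is proved by a density argument: the vertical maps are isomorphisms on torsion (by Lecomte/Jannsen rigidity for torsion cycles and rigidity for torsion points), the top square and outer rectangle commute, hence the bottom square commutes on torsion, and then Proposition~\ref{P:algcycles} (Weil) plus Lemma~\ref{L:Tor-to-Tor}(a) plus Zariski density of torsion in abelian varieties propagate commutativity from torsion to all of $\operatorname{A}^i(X_\Omega)$. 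Your approach inverts the logic: commutativity of \eqref{E:comL} is automatic by fiat, and the burden shifts entirely to proving that $\tau(\Omega)^{-1}\circ\phi$ is a \emph{regular} homomorphism.

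That shifted burden is where your argument has a real gap in positive characteristic, and the gap is not merely ``bookkeeping.'' To show $\tau(\Omega)^{-1}\circ\phi$ is regular you must show, for every pair $(T,Z)$ with $T$ a smooth pointed integral variety over $\Omega$ and $Z\in\operatorname{CH}^i(T\times_\Omega X_\Omega)$, that the morphism of $\Omega$-varieties $\psi_Z:T\to A$ inducing $\phi\circ w_Z$ factors through $\tau:\LKtrace A_\Omega\to A$ as a morphism of schemes. You can reduce to $T=\operatorname{Alb}(T)$ an abelian variety and $\psi_Z$ a homomorphism, but $\operatorname{Alb}(T)$ is an abelian variety \emph{over $\Omega$}, not the base change of one over $k$, and the universal property of the trace only produces factorizations through $\tau$ for homomorphisms $B_\Omega\to A$ with $B/k$. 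A homomorphism of abelian varieties need not lift through a purely inseparable isogeny (e.g.\ the identity does not lift through Verschiebung), so without an additional argument the factorization is simply not available. Your parenthetical suggestion --- ``one uses that the defining morphisms factor through $\LKtrace A_\Omega$ by the universal property'' --- is exactly the assertion that needs proof, and the universal property does not apply to the varieties $T$ in question. One would need to either spread out $(T,Z)$ over a finitely generated $k$-subalgebra of $\Omega$ and specialize, or else argue via torsion density as the paper does; either way it is a substantive step. (The first half of your argument, exhibiting a surjection $B_\Omega\twoheadrightarrow A$ with $B/k$ to conclude $\tau$ is a purely inseparable isogeny, is also compressed --- the spreading-out of the cycle $Z\in\operatorname{CH}^i(A\times_\Omega X_\Omega)$ to data over $k$ needs to be carried out --- but there your sketch is at least morally aligned with \cite[Thm.~3.7, Step~3]{ACMVdcg}, which the paper simply cites.) In characteristic zero your approach is clean and arguably shorter than the paper's, since $\tau$ is then an isomorphism and post-composing a regular homomorphism with an isomorphism of abelian varieties is trivially regular; but the lemma is stated in arbitrary characteristic and the paper is explicit that the purely-inseparable-isogeny subtlety is the point of the positive-characteristic case.
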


  \begin{proof}  Let us start by recalling some of the set-up from \cite[Thm.~3.7]{ACMVdcg}.  
First, consider the regular homomorphism $\LKtrace{\phi} : \operatorname{A}^i(X) \to \LKtrace{A}(k)$ constructed in Step 2 of the proof \cite[Thm.~3.7]{ACMVdcg}. 
   It fits into a commutative diagram  \cite[(3.9)]{ACMVdcg}
   \begin{equation}\label{E:com0}
    \vcenter{\xymatrix@R=.5cm{
     \operatorname{A}^i(X) \ar[r]^<>(0.5){\LKtrace{\phi}} \ar[dd] _{\text{ base change}}&
     \LKtrace{A}(k) \ar[d]^{\text{ base change}}\\
     & \LKtrace{A}_\Omega(\Omega) \ar[d]^{\tau(\Omega)}\\
     \operatorname{A}^i(X_\Omega)  \ar[r]^<>(0.5){\phi}&
     A(\Omega).
    }}
   \end{equation} 
   Since we are assuming that  $\phi: \operatorname{A}^i(X_{\Omega})\to A(\Omega)$ is surjective, Step 3 of the proof of \cite[Thm.~3.7]{ACMVdcg} yields that $\LKtrace{\phi} : \operatorname{A}^i(X) \to \LKtrace{A}(k)$ is surjective, and that $\tau : \LKtrace{A}_\Omega \rightarrow A$ is a purely inseparable isogeny, and thus an isomorphism in characteristic $0$.    In particular, $\tau(\Omega) : \LKtrace{A}_\Omega(\Omega) \rightarrow A(\Omega)$ is an isomorphism.

   Now consider the regular homomorphism $\LKtrace{\phi}_\Omega : \operatorname{A}^i(X_{\Omega})\to \LKtrace{A}_\Omega(\Omega)$ constructed in Step 1 of the proof of \cite[Thm.~3.7]{ACMVdcg}, which by \emph{loc.~cit.}~is surjective.   We can therefore fill in diagram \eqref{E:com0} to obtain\,:
   \begin{equation} \label{E:com}
   \vcenter{ \xymatrix@R=.5cm{
     \operatorname{A}^i(X) \ar[r]^<>(0.5){\LKtrace{\phi}} \ar[d]_{\text{ base change}}&
     \LKtrace{A}(k) \ar[d]^{\text{ base change}}\\
     \operatorname{A}^i(X_\Omega) \ar[r]^<>(0.5){(\LKtrace{\phi})_{\Omega}} \ar@{=}[d] & \LKtrace{A}_\Omega(\Omega) \ar[d]_{\simeq}^{\tau(\Omega)}\\
     \operatorname{A}^i(X_\Omega)  \ar[r]^<>(0.5){\phi}&
     A(\Omega). }}
   \end{equation} 
 We claim  that \eqref{E:com} is commutative.    
To start, the commutativity of the top square is established in Step 1
of the proof of   \cite[Thm.~3.7]{ACMVdcg}, and we have already
confirmed the commutativity of the outer rectangle, above.   For the
bottom square we argue as follows.  

By rigidity for torsion cycles on $X$ (\cite{jannsen15,lecomte86}\,; see also
\cite[Thm.~3.8(b)]{ACMVdcg}) and for torsion points on $\LKtrace A$,
the vertical arrows in diagram \eqref{E:com} are isomorphisms on
torsion. 
 A little more naively (i.e., without using \cite{jannsen15}), one can simply fix a prime number $\ell$ not equal to  $\operatorname{char}k$, and consider torsion to be  $\ell$-power torsion, and the rest of the argument goes through without change.  
 The top square and outer rectangle are commutative, and thus
\eqref{E:com} is commutative on torsion.   
Now let $\alpha\in
\operatorname{A}^i(X_\Omega)$.  By Weil \cite[Lem.~9]{weil54} (e.g.,
Proposition \ref{P:algcycles}) there exist an abelian variety $B/\Omega$,
a cycle class $Z\in \operatorname{CH}^i(B\times_\Omega X_\Omega)$, and
an $\Omega$-point $t\in B(\Omega)$ such that $\alpha=Z_t-Z_0$.  Then
consider the following diagram (not \emph{a priori} commutative)\,:
        \begin{equation}\label{E:com1}
    \vcenter{\xymatrix@R=.5cm{
  B(\Omega) \ar[r]^{w_Z} \ar@{=}[d]&    \operatorname{A}^i(X_\Omega) \ar[r]^<>(0.5){(\LKtrace{\phi})_{\Omega}} \ar@{=}[d] & \LKtrace{A}_\Omega(\Omega) \ar[d]_{\simeq}^{\tau(\Omega)}\\
  B(\Omega) \ar[r]^{w_Z}&   \operatorname{A}^i(X_\Omega)  \ar[r]^<>(0.5){\phi}&
     A(\Omega). }}
   \end{equation} 
   The left-hand square is obviously commutative.  We have shown that
   the right-hand square is commutative on torsion.
The horizontal arrows on  the left send torsion points to torsion cycle classes (Lemma
   \ref{L:Tor-to-Tor}(a)).  Therefore the whole diagram \eqref{E:com1} is
   commutative on torsion.  Since torsion points are Zariski
   dense in abelian varieties,
 the diagram
   is commutative if we replace $\operatorname{A}^i(X_\Omega)$ with
   $\operatorname{Im}(w_Z)$.  Since $\alpha\in
   \operatorname{Im}(w_Z)$, we see that $(\tau(\Omega)\circ (\LKtrace
   \phi)_\Omega)(\alpha)=\phi(\alpha)$.  Thus, since $\alpha$ was
   arbitrary, the lemma is proved.
  \end{proof}

  \begin{pro}\label{P:Tors-Gen'}
   Let $\Omega/k$ be an extension of algebraically closed fields of characteristic $0$, and let $X$ be a smooth projective variety over $k$. Let $A$ be an abelian variety over  $\Omega$ and let
   $\phi: \operatorname{A}^i(X_{\Omega})\to A(\Omega)$ be a surjective regular
   homomorphism.    Then $A$ admits a model over $k$, the $\Omega/k$-trace of $A$,  such that $\phi$ is
   $\operatorname{Aut}(\Omega/k)$-equivariant.
  \end{pro}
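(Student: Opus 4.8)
The plan is to deduce the statement from Lemma \ref{L:T-G-AC} and then prove equivariance by a density‑of‑torsion argument. First I would apply Lemma \ref{L:T-G-AC}: since $\operatorname{char}k=0$, the trace map $\tau:\LKtrace{A}_\Omega\to A$ is an isomorphism, and there is a regular homomorphism $(\LKtrace{\phi})_\Omega:\operatorname{A}^i(X_\Omega)\to\LKtrace{A}_\Omega(\Omega)$ with $\tau(\Omega)\circ(\LKtrace{\phi})_\Omega=\phi$. I would then endow $A$ with the $K$‑structure transported from the $k$‑model $\LKtrace{A}$ along $\tau$; with this structure $\tau(\Omega)$ is tautologically $\operatorname{Aut}(\Omega/k)$‑equivariant, so it suffices to show that $(\LKtrace{\phi})_\Omega$ is $\operatorname{Aut}(\Omega/k)$‑equivariant, where the target carries the natural action coming from the $k$‑model $\LKtrace{A}$ and the source carries the natural action available because $X$ is defined over $k$.

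Fix $\sigma\in\operatorname{Aut}(\Omega/k)$ and $\alpha\in\operatorname{A}^i(X_\Omega)$. By Proposition \ref{P:algcycles}, applied over the perfect field $\Omega$, there are an abelian variety $B/\Omega$, a cycle $Z\in\operatorname{CH}^i(B\times_\Omega X_\Omega)$ and a point $t\in B(\Omega)$ with $\alpha=w_Z(t)$; applying $\sigma$ and using $X_\Omega^\sigma=X_\Omega$ gives $\sigma\alpha=w_{Z^\sigma}(\sigma t)$, where $(B^\sigma,Z^\sigma)$ are the $\sigma$‑transforms of $(B,Z)$. Regularity of $(\LKtrace{\phi})_\Omega$ produces homomorphisms of abelian varieties $\psi_Z:B\to\LKtrace{A}_\Omega$ and $\psi_{Z^\sigma}:B^\sigma\to\LKtrace{A}_\Omega$ inducing $(\LKtrace{\phi})_\Omega\circ w_Z$ and $(\LKtrace{\phi})_\Omega\circ w_{Z^\sigma}$, respectively. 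The crux is the claim $\psi_{Z^\sigma}=\psi_Z^{\,\sigma}$, the $\sigma$‑conjugate (here $(\LKtrace{A}_\Omega)^\sigma=\LKtrace{A}_\Omega$ canonically since $\LKtrace{A}$ is defined over $k$). Since torsion is Zariski dense, it is enough to verify this on $B^\sigma[N]$ for each $N$.

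For $t'\in B^\sigma[N]$ one has $w_{Z^\sigma}(t')=\sigma\big(w_Z(\sigma^{-1}t')\big)$ by the behaviour of refined Gysin fibres under $\sigma$, and $w_Z(\sigma^{-1}t')$ is an $N$‑torsion cycle on $X_\Omega$ by Lemma \ref{L:Tor-to-Tor}(a), hence fixed by $\sigma$ by rigidity of torsion cycles (Lecomte), so $w_{Z^\sigma}(t')=w_Z(\sigma^{-1}t')$. Likewise $(\LKtrace{\phi})_\Omega\big(w_Z(\sigma^{-1}t')\big)$ is $N$‑torsion in $\LKtrace{A}_\Omega(\Omega)=\LKtrace{A}(\Omega)$, hence lies in $\LKtrace{A}(k)$ because $k$ is algebraically closed, hence is fixed by $\sigma$. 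Thus both $\psi_{Z^\sigma}(t')$ and $\psi_Z^{\,\sigma}(t')$ equal $(\LKtrace{\phi})_\Omega\big(w_Z(\sigma^{-1}t')\big)$, proving the claim. Evaluating at $\sigma t$ then gives $(\LKtrace{\phi})_\Omega(\sigma\alpha)=\psi_{Z^\sigma}(\sigma t)=\psi_Z^{\,\sigma}(\sigma t)=\sigma\big(\psi_Z(t)\big)=\sigma\big((\LKtrace{\phi})_\Omega(\alpha)\big)$, which is the desired equivariance; composing with $\tau(\Omega)$ yields the statement for $\phi$.

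The main obstacle is the bookkeeping behind the claim $\psi_{Z^\sigma}=\psi_Z^{\,\sigma}$: one must track carefully how $\sigma$ acts on the representing data $(B,Z,t)$, on refined Gysin fibres $w_Z$, and on the abelian‑variety homomorphisms supplied by regularity, and invoke at precisely the right places that $X$ and $\LKtrace{A}$ are defined over $k$ so that torsion cycles on $X_\Omega$ and torsion points of $\LKtrace{A}$ are $\operatorname{Aut}(\Omega/k)$‑invariant. Everything else is formal. Equivalently, one can package the argument as: a regular homomorphism is determined by its restriction to torsion (using Proposition \ref{P:algcycles}, Lemma \ref{L:Tor-to-Tor}(a) and density of torsion), and $(\LKtrace{\phi})_\Omega$ and its $\sigma$‑conjugate agree on torsion because both actions are trivial there.
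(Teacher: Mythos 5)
Your proposal is correct, and the first reduction is exactly the paper's: apply Lemma \ref{L:T-G-AC} to obtain the trace isomorphism $\tau$ (using characteristic zero and surjectivity of $\phi$) and the factoring regular homomorphism $(\LKtrace{\phi})_\Omega$, so that it suffices to show $(\LKtrace{\phi})_\Omega$ is $\operatorname{Aut}(\Omega/k)$-equivariant. Where you diverge is in the second half: the paper simply quotes the fact that $(\LKtrace{\phi})_\Omega$ is $\operatorname{Aut}(\Omega/k)$-equivariant \emph{by construction} (Step~1 of \cite[Thm.~3.7]{ACMVdcg}), whereas you reprove this equivariance from scratch by a torsion-density argument: parametrize an arbitrary class by $(B,Z,t)$ via Proposition \ref{P:algcycles}, show the induced abelian-variety homomorphisms $\psi_{Z^\sigma}$ and $\psi_Z^\sigma$ agree on $N$-torsion (Lemma \ref{L:Tor-to-Tor}(a), Lecomte rigidity for torsion cycles, and the fact that $\LKtrace{A}[N]\subset\LKtrace{A}(k)$ is Galois-fixed), and conclude by density of torsion. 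This is exactly the toolkit the paper deploys in Lemma \ref{L:T-G-AC} (to get commutativity of the trace diagram) and in Proposition \ref{P:Tors-Gen} (to upgrade Galois-equivariance from torsion to all points), but transposed to the $\Omega/k$ setting; your version has the virtue of being self-contained and not relying on the precise construction in the external reference, at the cost of rederiving a fact the authors treat as known.
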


  \begin{proof}  
This follows directly from Lemma  \ref{L:T-G-AC}.  Indeed, by the construction of $(\LKtrace \phi)_\Omega$ in Step 1 of  \cite[Thm.~3.7]{ACMVdcg},  $(\LKtrace \phi)_\Omega$ is $\operatorname{Aut}(\Omega/k)$-equivariant.  Then, since $\tau: \LKtrace A_\Omega \to A$ is an isomorphism, we are done. 
  \end{proof}

 \begin{rem}
More generally, if $\operatorname{char} k\ne 0$, then in the notation of Proposition \ref{P:Tors-Gen'}, the abelian variety $A$ admits a purely inseparable isogeny to an abelian variety over $\Omega$ that descends to $k$, namely the $\Omega/k$-trace.  Moreover, under this purely inseparable isogeny, the $\Omega$-points of both abelian varieties are identified, and under the induced action of $\operatorname{Aut}(\Omega/k)$ on $A(\Omega)$, we have that $\phi$ is $\operatorname{Aut}(\Omega/k)$-equivariant.  
\end{rem}

 \subsection{Galois-equivariant regular homomorphisms and torsion points}
 The main point of this subsection is to prove Proposition \ref{P:Tors-Gen}.  
 This allows us to  utilize results of \cite{ACMVdcg} on regular homomorphisms in
 the setting of torsion points.  
 We start with the following lemma.

 \begin{lem}\label{L:key}
  Let $A$ be an abelian variety over a perfect field $K$ and let
  $\phi: \operatorname{A}^i(X_{\bar K})\to A(\bar K)$ be a regular
  homomorphism.  Assume that there is a prime   $\ell \not 
  =\operatorname{char}(K)$ such that for all positive integers $n$
  we have that the map 
  $\phi[\ell^n]: \operatorname{A}^i(X_{\bar K})[\ell^n]\to A[\ell^n]$ is
  $\operatorname{Gal}(K)$-equivariant.  Let $B/K$ be an abelian variety and let 
  $Z \in \operatorname{CH}^i(B\times_KX)$ be a cycle class. Then the induced morphism
  $\psi_{Z_{\bar K}}: B_{\bar K} \to A_{\bar K}$ 
  is defined over $K$.  
 \end{lem}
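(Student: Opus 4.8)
The plan is to show that $\psi := \psi_{Z_{\bar K}}\colon B_{\bar K}\to A_{\bar K}$ is invariant under the natural $\operatorname{Gal}(K)$-action on $\bar K$-morphisms and then to invoke Galois descent. Since $w_Z(0)=Z_0-Z_0=0$ and $\phi(0)=0$, the morphism $\psi$ carries the origin of $B_{\bar K}$ to the origin of $A_{\bar K}$, so it is in fact a homomorphism of abelian varieties over $\bar K$; but for the descent I only use that $\psi$ is a morphism of reduced separated $\bar K$-schemes, hence determined by its effect on $\bar K$-points, and that it descends to a $K$-morphism precisely when its graph $\Gamma_\psi\subseteq (B\times_K A)_{\bar K}$ is $\operatorname{Gal}(K)$-stable (the graph reduces this to the subvariety-descent statement used in the proof of Lemma \ref{L:torsdescends}). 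Concretely, I must verify that $\sigma\bigl(\psi(b)\bigr)=\psi\bigl(\sigma(b)\bigr)$ for every $\sigma\in\operatorname{Gal}(K)$ and every $b\in B(\bar K)$.

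Next I would reduce this identity of morphisms to a Zariski-dense set of points, taking $S:=\bigcup_{n\ge 1}B[\ell^n]$ (recall $\ell\ne\operatorname{char}K$). To see $S$ is dense: its Zariski closure is a closed subgroup scheme $H\subseteq B_{\bar K}$ containing every $B[\ell^n]$, so its identity component $H^0$ is an abelian subvariety with $\#H^0[\ell^n]\ge \#B[\ell^n]/\#(H/H^0)=\ell^{2n\dim B}/\#(H/H^0)$ for all $n$; comparing with $\#H^0[\ell^n]=\ell^{2n\dim H^0}$ forces $\dim H^0=\dim B$, i.e.\ $H=B_{\bar K}$. In particular a single prime $\ell$ suffices, which is why the hypothesis on $\phi$ for one $\ell$ is enough.

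Then I would check Galois-equivariance of $\psi$ on each $B[\ell^n]$. On $\bar K$-points $\psi$ is by construction the composite $\phi\circ w_Z$ with $w_Z(b)=Z_b-Z_0$. Because $Z\in\operatorname{CH}^i(B\times_K X)$ is defined over $K$, the refined Gysin fibre is $\operatorname{Gal}(K)$-compatible, $\sigma(Z_b)=Z_{\sigma(b)}$ (cf.\ \cite[\S 1.2]{ACMVdcg}), so $w_Z\colon B(\bar K)\to\operatorname{A}^i(X_{\bar K})$ is $\operatorname{Gal}(K)$-equivariant; restricting to $\ell^n$-torsion and invoking Lemma \ref{L:Tor-to-Tor}(a) yields a $\operatorname{Gal}(K)$-equivariant homomorphism $w_Z[\ell^n]\colon B[\ell^n]\to\operatorname{A}^i(X_{\bar K})[\ell^n]$. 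By hypothesis $\phi[\ell^n]$ is $\operatorname{Gal}(K)$-equivariant, so $\psi[\ell^n]=\phi[\ell^n]\circ w_Z[\ell^n]$ is $\operatorname{Gal}(K)$-equivariant, which is exactly the desired equality on $B[\ell^n]$. Letting $n$ vary and using the density of $S$, one gets $\sigma\circ\psi=\psi\circ\sigma$ on all $\bar K$-points, hence $\Gamma_\psi$ is $\operatorname{Gal}(K)$-stable and $\psi$ is defined over $K$.

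The only step that is not purely formal is the $\operatorname{Gal}(K)$-compatibility of the refined Gysin fibre for a $K$-rational cycle $Z$; this is standard, but it is the mechanism by which the torsion hypothesis on $\phi$ propagates to $\psi$. The remaining ingredients—Zariski density of $\ell$-power torsion on an abelian variety, Lemma \ref{L:Tor-to-Tor}(a), and Galois descent for morphisms between $K$-forms—are routine, so I do not anticipate a serious obstacle; the one point to state carefully is that a single prime $\ell$ already suffices because $\bigcup_n B[\ell^n]$ is Zariski-dense.
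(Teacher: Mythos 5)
Your proof is correct and follows essentially the same route as the paper's: reduce to Zariski-density of $\ell$-power torsion in the graph, observe that $w_Z$ is $\operatorname{Gal}(K)$-equivariant and carries $\ell^n$-torsion to $\ell^n$-torsion (Lemma \ref{L:Tor-to-Tor}(a)), compose with the hypothesis on $\phi[\ell^n]$, and conclude by Galois descent. The only difference is that you spell out the density argument and the graph/descent formalism, which the paper leaves implicit.
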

 \begin{proof} 
  Since (geometric) $\ell$-primary torsion points are Zariski dense in the graph
  of
  $\psi_{Z_{\bar K}}$ inside $B\times_K A$, it
  suffices to show that the induced morphism $B(\bar K) \to A(\bar K)$
  is Galois-equivariant on $\ell$-primary torsion.  
  Since the map $w_Z : B(\bar K) \to
  \operatorname{A}^i(X_{\bar K})$ is Galois-equivariant and since
  $\phi: \operatorname{A}^i(X_{\bar K})\to A(\bar K)$ is
  Galois-equivariant on $\ell^n$-torsion for all positive integers $n$, it is
  even enough to show that the
  map $w_Z : B(\bar K) \to \operatorname{A}^i(X_{\bar K})$ sends
  torsion points of $B(\bar K)$ to torsion cycles in
  $\operatorname{A}^i(X_{\bar K})$.  This is Lemma \ref{L:Tor-to-Tor}(a).
 \end{proof}

 We can now prove\,: 
 
 \begin{pro}\label{P:Tors-Gen}
  Let $A$ be an abelian variety over a perfect field $K$ and let
  $\phi: \operatorname{A}^i(X_{\bar K})\to A(\bar K)$ be a regular
  homomorphism.  Assume that there is a prime   $\ell \not 
  =\operatorname{char}(K)$ such that for all positive integers $n$ 
  the map 
  $\phi[\ell^n]: \operatorname{A}^i(X_{\bar K})[\ell^n]\to A[\ell^n]$ is
  $\operatorname{Gal}(K)$-equivariant.  Then $\phi$ is
  $\operatorname{Gal}(K)$-equivariant.
 \end{pro}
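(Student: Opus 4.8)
The plan is to deduce the $\operatorname{Gal}(K)$-equivariance of $\phi$ on an arbitrary algebraically trivial cycle class from Lemma~\ref{L:key}, by presenting such a class through a parameter space that is itself defined over $K$. So I would fix $\sigma\in\operatorname{Gal}(K)$ and $\alpha\in\operatorname{A}^i(X_{\bar K})$, and aim to prove $\phi(\sigma\cdot\alpha)=\sigma\cdot\phi(\alpha)$.

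First I would apply Proposition~\ref{P:algcycles} to write $\alpha=Z_t-Z_0=w_Z(t)$ for some abelian variety $B/K$ (pointed by its origin $0$), a cycle $Z\in\operatorname{CH}^i(B\times_K X)$, and a point $t\in B(\bar K)$. Since $B_{\bar K}$ is a smooth integral pointed $\bar K$-variety, the definition of the regular homomorphism $\phi$ supplies a morphism $\psi_{Z_{\bar K}}\colon B_{\bar K}\to A_{\bar K}$ over $\bar K$ that induces $\phi\circ w_{Z_{\bar K}}$ on $\bar K$-points. The standing hypothesis---that $\phi[\ell^n]$ is $\operatorname{Gal}(K)$-equivariant for every $n$---is precisely what lets me invoke Lemma~\ref{L:key}, which yields that $\psi_{Z_{\bar K}}$ descends to a morphism $\psi_Z\colon B\to A$ of $K$-schemes.

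With that in hand, I would conclude by the chain of identities
\[
\phi(\sigma\cdot\alpha)=\phi\bigl(w_Z(\sigma\cdot t)\bigr)=\psi_Z(\sigma\cdot t)=\sigma\cdot\psi_Z(t)=\sigma\cdot\phi\bigl(w_Z(t)\bigr)=\sigma\cdot\phi(\alpha).
\]
The first and last equalities use that $w_Z$ is $\operatorname{Gal}(K)$-equivariant on $\bar K$-points, which holds because $Z$ and the base point $0$ are defined over $K$ (so $\sigma(Z_t)=Z_{\sigma t}$ and $\sigma(Z_0)=Z_0$); the second and fourth use that $\psi_Z$ induces $\phi\circ w_Z$; and the middle equality uses that $\psi_Z$, being a morphism of $K$-schemes, is equivariant on $\bar K$-points. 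Since $\sigma$ and $\alpha$ are arbitrary, this proves the proposition.

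I do not expect a genuine obstacle inside this argument---the substance has been front-loaded into Proposition~\ref{P:algcycles} (existence of the $K$-rational parameter space $B$) and Lemma~\ref{L:key} (descent of $\psi_{Z_{\bar K}}$, which rests on Zariski density of $\ell$-power torsion in its graph together with Lemma~\ref{L:Tor-to-Tor}(a)). The one step that needs care is the $\operatorname{Gal}(K)$-equivariance of $w_Z$, i.e.\ the compatibility of the refined Gysin fiber with the Galois action; this is exactly the reason it matters that $Z$ is taken over $K$, and it is the hinge that converts the $K$-rationality of $\psi_Z$ into the desired equivariance of $\phi$.
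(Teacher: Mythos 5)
Your proof is correct and takes essentially the same route as the paper: both reduce to Proposition~\ref{P:algcycles} to present $\alpha$ via a $K$-rational parameter space, invoke Lemma~\ref{L:key} (which is where the torsion hypothesis is consumed) to descend $\psi_{Z_{\bar K}}$ to $K$, and then conclude; your explicit chain of equalities is just an unwound version of the paper's appeal to Lemma~\ref{L:ez-emma}(a) on the image of $w_{Z_{\bar K}}$. One tiny slip in your commentary: the last equality $\sigma\cdot\phi(w_Z(t))=\sigma\cdot\phi(\alpha)$ uses only the definition $\alpha=w_Z(t)$, not the Galois-equivariance of $w_Z$ (that is needed only for the first equality); this does not affect the validity of the argument.
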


 \begin{proof}
  Let $\alpha \in \operatorname{A}^i(X_{\bar K})$, and let $\sigma\in
  \operatorname{Gal}(K)$.  From Proposition \ref{P:algcycles}, we have
  an abelian variety $B/K$,    a cycle $Z \in \operatorname{CH}^i( B\cross_K   
  X)$,  and a 
  $\bar K$-point $t\in 
  B(\bar K)$   such that $\alpha = Z_{t}-Z_{0}$. 
  Now consider the following diagram (not \emph{a  priori} commutative)\,:
  $$
  \begin{CD}
  B(\bar K)@>w_{Z_{\bar K}} >> \operatorname{A}^i(X_{\bar K}) @>\phi>> A(\bar K)\\
  @V\sigma_B^*VV @V\sigma_X^*VV @V\sigma_A^*VV\\
  B(\bar K)@>w_{Z_{\bar K}} >> \operatorname{A}^i(X_{\bar K}) @>\phi>> A(\bar
  K).\\
  \end{CD}
  $$
  Since $Z$ is defined over $K$, and the base point $0$ is defined over $K$, the
  left-hand square is commutative (e.g., \cite[Rem.~4.3]{ACMVdcg}).  
  It follows from  Lemma \ref{L:key} that the outer rectangle is also commutative.
  Therefore, from Lemma \ref{L:ez-emma}(a), the right-hand square in the diagram
  is commutative on the image of $w_{Z_{\bar K}}$.   
  In particular,  $\phi(\sigma_X^*\alpha)=\sigma_B^*\phi(\alpha)$. 
 \end{proof}

 \section{Proof of Theorem \ref{T:main}\,: Part  III, the coniveau filtration is
  split} \label{S:Andre}
 We now complete the proof of Theorem \ref{T:main} by showing that the coniveau
 filtration is split (Corollary \ref{C:CNFSplit}). For this purpose, we use Yves
 Andr\'e's theory of motivated cycles \cite{AndreIHES}. Along the way, we show in
 Theorem \ref{T:mot} that the existence of a phantom isogeny class for
 $\coniveau^nH^{2n+1}(X_{\bar K},\rat_\ell(n))$ for all primes $\ell$
 follows directly from Andr\'e's theory. Note that we already  proved this in
 Theorem \ref{T:JacDesc}  in a more precise form, namely by showing that there
 exists a \emph{distinguished} phantom abelian variety within the isogeny class.

 \vskip .2 cm 
 For clarity, we briefly review  the setup of Andr\'e's theory of  motivated
 cycles, and fix some notation.  
 Given a smooth projective variety $X$ over a field $K$ and a prime $\ell \neq
 \operatorname{char}(K)$, let us denote $\operatorname{B}^j(X)_\rat$ the image
 of
 the cycle class map $\operatorname{CH}^j(X)_\rat \to H^{2j}(X_{\bar K},
 \rat_\ell(j))$.
 A \emph{motivated cycle} on
 $X$ with rational coefficients is an element of the graded algebra 
 $\bigoplus_r
 H^{2r}(X_{\bar K}, \rat_\ell(r))$ of the form $\mathrm{pr}_* (\alpha \cup *
 \beta)$, where $\alpha$ and $\beta$ are elements of
 $\operatorname{B}^*(X\times_K Y)_\rat$ with $Y$ an arbitrary smooth projective
 variety over $K$, $\mathrm{pr} : X\times_K Y \to X$ is the natural projection,
 and $*$ is the Lefschetz involution on $\bigoplus_r H^{2r}((X\times_K Y)_{\bar
  K}, \rat_\ell(r))$ relative to any polarization on $X\times_K Y$.  
 The set of
 motivated cycles on $X$, denoted
 $\operatorname{B}^\bullet_{\mathrm{mot}}(X)_\rat$,
 forms a  graded $\rat$-sub-algebra of $\bigoplus_r H^{2r}(X_{\bar K},
 \rat_\ell(r))$, with $\operatorname{B}^r_{\mathrm{mot}}(X)_\rat\subseteq
 H^{2r}(X_{\bar K},
 \rat_\ell(r))$\,; \emph{cf.} \cite[Prop. 2.1]{AndreIHES}. 
 Taking $Y=\operatorname{Spec}K$ above, we have an inclusion 
 $\operatorname{B}^r(X)_{\mathbb Q}\subseteq
 \operatorname{B}^r_{\mathrm{mot}}(X)_\rat$. 
 Moreover there is a
 notion of motivated correspondences between smooth projective varieties, and
 there is a
 composition law with the expected properties.

 \begin{pro}\label{P:mot}
  Let $Y$ and $X$ be smooth projective varieties over a field $K\subseteq \mathbb
  C$. Consider
  a   motivated cycle  $\gamma \in
  \operatorname{B}_{\operatorname{mot}}^{d_Y+r}(Y\times_K X)_\rat$ and its action
  $$\xymatrix{\gamma_* : H^j(Y_{\bar K}, \rat_\ell) \ar[r]& H^{j+2r}(X_{\bar K},
  \rat_\ell(r))}.$$ 
Then $\operatorname{Im}(\gamma_*)$
  (resp.~$\operatorname{ker}(\gamma_*)$) is a direct summand of the
  $\operatorname{Gal}(K)$-representation $H^{j+2r}(X_{\bar K}, \rat_\ell(r))$
  (resp. $H^j(Y_{\bar K}, \rat_\ell)$).  
 \end{pro}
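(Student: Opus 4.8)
The plan is to deduce the statement from Andr\'e's theorem \cite{AndreIHES} that the category $\mathcal{M}_{\mathrm{mot}}(K)$ of motives over $K$ for motivated correspondences is a \emph{semisimple} abelian (indeed Tannakian) category, together with its additive $\ell$-adic realization functor $H_\ell$ to the category of $\operatorname{Gal}(K)$-representations on $\rat_\ell$-vector spaces. Since the source is semisimple abelian, $H_\ell$ is automatically exact and carries direct sums to direct sums. The elementary fact to be exploited is that in a semisimple abelian category every morphism $g\colon M\to N$ has an image which is a direct summand of $N$ and a kernel which is a direct summand of $M$; applying $H_\ell$ then transports both facts to the category of $\operatorname{Gal}(K)$-representations.

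First I would recall from \cite{AndreIHES} that the K\"unneth projectors $\pi^X_i$ and $\pi^Y_i$ are motivated --- this is the one place where a Lefschetz-type standard conjecture is used, and it holds by the very construction of motivated cycles --- so that the objects $\mathfrak{h}^j(Y)$ and $\mathfrak{h}^{j+2r}(X)(r)$ are defined in $\mathcal{M}_{\mathrm{mot}}(K)$, with $H_\ell(\mathfrak{h}^j(Y)) = H^j(Y_{\bar K},\rat_\ell)$ and $H_\ell(\mathfrak{h}^{j+2r}(X)(r)) = H^{j+2r}(X_{\bar K},\rat_\ell(r))$. Next, since a correspondence of degree $d_Y+r$ automatically sends $H^j(Y_{\bar K},\rat_\ell)$ into $H^{j+2r}(X_{\bar K},\rat_\ell(r))$, the truncated motivated correspondence $\pi^X_{j+2r}\circ\gamma\circ\pi^Y_j$ defines a morphism $g\colon\mathfrak{h}^j(Y)\to\mathfrak{h}^{j+2r}(X)(r)$ in $\mathcal{M}_{\mathrm{mot}}(K)$ whose realization $H_\ell(g)$ is exactly the map $\gamma_*$ of the statement. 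By semisimplicity, $\operatorname{Im}(g)$ is a direct summand of $\mathfrak{h}^{j+2r}(X)(r)$ and $\operatorname{ker}(g)$ is a direct summand of $\mathfrak{h}^j(Y)$; applying the exact additive functor $H_\ell$ yields $\operatorname{Im}(\gamma_*)=H_\ell(\operatorname{Im}(g))$ as a direct summand of $H^{j+2r}(X_{\bar K},\rat_\ell(r))$ and $\operatorname{ker}(\gamma_*)=H_\ell(\operatorname{ker}(g))$ as a direct summand of $H^j(Y_{\bar K},\rat_\ell)$. All of the maps involved are $\operatorname{Gal}(K)$-equivariant, because motivated cycles --- being built from cycle classes of cycles defined over $K$ together with the Lefschetz operators $L$ and $\Lambda$ attached to a $K$-rational polarization, the latter being Galois-equivariant by their uniqueness in the $\mathfrak{sl}_2$-action --- are $\operatorname{Gal}(K)$-invariant.

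The substance is entirely imported from \cite{AndreIHES}: that the K\"unneth components of the diagonal are motivated, that $\mathcal{M}_{\mathrm{mot}}(K)$ is semisimple (which rests on the existence of polarizations, via a Jannsen-type argument), and --- if one wants it --- that $H_\ell$ is faithful (which follows from the independence of the notion of motivated cycle on the chosen Weil cohomology, giving a functorial identification $H_\ell(M)\cong H_B(M)\otimes_\rat\rat_\ell$). I expect the only routine work to be the bookkeeping of Tate twists and the verification that $\pi^X_{j+2r}\circ\gamma\circ\pi^Y_j$ is genuinely a morphism of motives realizing $\gamma_*$; there is no serious obstacle once Andr\'e's framework is in place. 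One could instead avoid the categorical language and use the polarizations of \cite{AndreIHES} to exhibit directly a motivated idempotent on $H^{j+2r}(X_{\bar K},\rat_\ell(r))$ with image $\operatorname{Im}(\gamma_*)$, and one on $H^j(Y_{\bar K},\rat_\ell)$ with kernel $\operatorname{ker}(\gamma_*)$, but the semisimple-abelian-category formulation is cleaner.
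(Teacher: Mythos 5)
Your proposal is correct and follows essentially the same route as the paper: it appeals to Andr\'e's theorem that the category of motives for motivated correspondences over $K$ is abelian semisimple and Tannakian, deduces from semisimplicity that kernels and images are direct summands in the motivic category, transports this through the exact $\ell$-adic realization functor, and invokes the fact that motivated cycles over $K$ are exactly the $\operatorname{Gal}(K)$-invariant ones (the paper cites Scolie~2.5 of \cite{AndreIHES} here) to conclude the summands are Galois-stable. The only cosmetic divergence is that you truncate via K\"unneth projectors to work with $\mathfrak{h}^j(Y)$ and $\mathfrak{h}^{j+2r}(X)(r)$ directly, whereas the paper views $\gamma$ as a morphism $\mathfrak{h}(Y)\to\mathfrak{h}(X)(r)$, produces an idempotent $p$ cutting out $\ker(\gamma)$, and recovers the image statement by duality; both are immediate consequences of the same semisimplicity and the exactness of the fiber functor.
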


 \begin{proof}
  We are going to show   that if  $\gamma \in
  \operatorname{B}^{d_Y+r}_{\mathrm{mot}}(Y\times_K X)_\rat$ is a motivated
  correspondence, then there exists an
  idempotent motivated correspondence $p \in
  \operatorname{B}^{d_Y}_{\mathrm{mot}}(Y\times_K Y)_\rat$ such that 
  $p_*H^j(Y_{\bar K}, \rat_\ell)  = \operatorname{ker}(\gamma_*)$.   Assuming the
  existence of such a $p$, this would establish that
  $\operatorname{ker}(\gamma_*)$ is a direct summand of $H^j(Y_{\bar K},
  \rat_\ell)$ as a $\mathbb Q_\ell$-vector space.  But then by 
  \cite[Scolie 2.5]{AndreIHES}, motivated cycles on a smooth projective variety
  $Y$
  over $K$ are exactly the 
  $\operatorname{Gal}(K)$-invariant motivated cycles on $Y_{\bar K}$\,;
  therefore $\operatorname{ker}(\gamma_*)$
  is indeed a direct summand of $H^j(Y_{\bar K}, \rat_\ell)$ as a
  $\operatorname{Gal}(K)$-representation, completing the proof. 
  The statement
  about the image of $\gamma_*$ 
  follows by duality.

  The existence of $p$ follows formally from
  \cite[Thm.~0.4]{AndreIHES}\,: the 
  $\otimes$-category of pure motives $\mathscr{M}$ over a field $K$ of
  characteristic zero obtained
  by using motivated correspondences rather than algebraic correspondences is a
  graded, abelian semi-simple, polarized, and Tannakian category over $\rat$.
  Indeed, using the notations from \cite[\S 4]{AndreIHES} and viewing $\gamma$ as
  a
  morphism from the motive $\mathfrak{h}(Y)$ to the motive $\mathfrak{h}(X)(r)$,
  we see by semi-simplicity that there exists an idempotent motivated
  correspondence $p \in  \operatorname{B}^{d_Y}_{\mathrm{mot}}(Y\times_K
  Y)_\rat$
  such that $\operatorname{ker}(\gamma) = p\mathfrak{h}(Y)$.  Now the Tannakian
  category $\mathscr{M}$ is neutralized by the fiber functor to the category of
  $\rat_\ell$-vector spaces given by the $\ell$-adic realization functor. Since
  by
  definition a fiber functor is exact, $p_*H^j(Y_{\bar K},
  \rat_\ell) 
  = \operatorname{ker}(\gamma_*)$ as $\rat_\ell$-vector spaces. 
 \end{proof}
 
 \begin{teo}\label{T:mot} Suppose $X$ is a smooth projective variety over a
  field $K\subseteq \mathbb C$, and let $n$ be a nonnegative integer.
  The $\operatorname{Gal}(K)$-representation $\coniveau^nH^{2n+1}(X_{\bar
   K},\rat_\ell(n))$ admits a phantom\,; more precisely there exist an abelian
  variety $J'$ over $K$ and a 
  correspondence $\Gamma' \in
  \operatorname{CH}^{\dim{J'}+n}(J'\times_K X)$ such  that the morphism of
  $\operatorname{Gal}(K)$-representations 
  \begin{equation}\label{E:splitinclusion}
 \xymatrix{ \Gamma'_* : H^1(J'_{\bar K},\rat_\ell) \ar@{^(->}[r]&
  H^{2n+1}(X_{\bar
   K},\rat_\ell(n))}
  \end{equation}
  is split injective with image $\coniveau^nH^{2n+1}(X_{\bar
   K},\rat_\ell(n))$.
 \end{teo}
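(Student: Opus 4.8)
The plan is to combine Proposition \ref{P:niveauK} with Proposition \ref{P:mot}. Proposition \ref{P:niveauK} already supplies a geometrically integral smooth projective curve $C/K$ with a $K$-point and a correspondence $\gamma \in \operatorname{CH}^{n+1}(C\times_K X)_\rat$ whose action $\gamma_* : H^1(C_{\bar K},\rat_\ell) \to H^{2n+1}(X_{\bar K},\rat_\ell(n))$ has image $\coniveau^n H^{2n+1}(X_{\bar K},\rat_\ell(n))$ as a $\operatorname{Gal}(K)$-representation. The first step would be to pass from this surjection onto the coniveau piece to a split \emph{injection}. Since an algebraic correspondence is in particular a motivated correspondence (taking $Y=\operatorname{Spec}K$ in the definition), Proposition \ref{P:mot} applies: $\operatorname{ker}(\gamma_*) \subseteq H^1(C_{\bar K},\rat_\ell)$ is cut out by an idempotent motivated correspondence $p \in \operatorname{B}^{1}_{\mathrm{mot}}(C\times_K C)_\rat$, hence $N^1 H^1(C_{\bar K},\rat_\ell) := (\operatorname{Id}-p)_* H^1(C_{\bar K},\rat_\ell)$ is a $\operatorname{Gal}(K)$-stable complement to the kernel, mapping isomorphically onto $\coniveau^n H^{2n+1}(X_{\bar K},\rat_\ell(n))$ via $\gamma_*$.

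The second step is to realize this complement as $H^1$ of an honest abelian variety $J'/K$. Here I would use the equivalence between the motive $\mathfrak{h}^1$ of a curve and abelian varieties over $K$ up to isogeny: the idempotent $\operatorname{Id}-p$ acting on $\mathfrak{h}^1(C)$ (which is the motive of $\operatorname{Pic}^0_{C/K}$) determines an abelian subvariety up to isogeny of $J(C) = \operatorname{Pic}^0_{C/K}$ defined over $K$. Concretely, $p$ need not be algebraic, but one can instead argue more directly: the composite $J(C) \xrightarrow{\gamma_*} (\text{target})$ — rather, I would use that there is a $K$-homomorphism from $J(C)$ whose image is defined over $K$ (as already shown in Theorem \ref{T:JacDesc}, Steps 1--2, $J^{2n+1}_a(X_\cx)$ descends to $K$ as an abelian variety $J$ dominated by $J(C)$); set $J' = J$, or, to keep the proof self-contained within Section \ref{S:Andre}, take $J'$ to be any abelian variety over $K$ with $H^1(J'_{\bar K},\rat_\ell) \cong N^1 H^1(C_{\bar K},\rat_\ell)$ as Galois representations, which exists by combining the isogeny decomposition of $J(C)$ over $K$ with the fact that $N^1 H^1(C_{\bar K},\rat_\ell)$ is a $\operatorname{Gal}(K)$-subrepresentation arising from a motivated (hence, for curves, algebraic by Lefschetz $(1,1)$ in degree $1$) idempotent. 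Then $\beta : C \to J(C) \to J'$ and $\gamma$ compose to give $\Gamma' = \gamma \circ \Gamma_\beta^t \in \operatorname{CH}^{\dim J' + n}(J'\times_K X)_\rat$, and clearing denominators (the action on cohomology is unchanged up to the isogeny absorbed into $J'$) produces an integral correspondence.

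The third and essentially formal step is to check that $\Gamma'_*$ is split injective with the asserted image: injectivity holds because we restricted to the complement of the kernel, the image is $\coniveau^n H^{2n+1}(X_{\bar K},\rat_\ell(n))$ by Proposition \ref{P:niveauK}, and splitness as a $\operatorname{Gal}(K)$-representation follows from Proposition \ref{P:mot} applied to $\Gamma'$ (it is an algebraic, hence motivated, correspondence, so its image is a direct summand). The main obstacle I anticipate is bookkeeping around the passage from $\rat$-coefficient (motivated) idempotents to an actual abelian variety $J'/K$ with an \emph{integral} correspondence: one must ensure the idempotent $p$ (a priori only motivated, and with $\rat_\ell$-coefficients in its $\ell$-adic realization for each $\ell$ separately) yields a single abelian variety over $K$ working simultaneously for all primes $\ell$. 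This is resolved by noting that the decomposition is really happening at the level of the curve's motive $\mathfrak{h}^1(C)$, which for curves is governed by the well-understood category of abelian varieties up to isogeny over $K$ — the Lefschetz standard conjecture in degree $1$ (already invoked in Lemma \ref{L:ladicLef}) makes the relevant projectors algebraic, so one gets a genuine abelian subvariety of $J(C)$ over $K$ and the realization is compatible across all $\ell$ via the standard comparison with the Tate modules of $\operatorname{Pic}^0_{C/K}$.
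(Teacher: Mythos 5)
Your proposal follows essentially the same route as the paper: combine Proposition \ref{P:niveauK} with Proposition \ref{P:mot} to extract a motivated idempotent on $C\times_K C$ cutting out a Galois-stable complement to $\ker(\gamma_*)$, observe that motivated cycles of codimension one on smooth projective varieties in characteristic zero are algebraic (via the Lefschetz $(1,1)$-theorem plus Galois descent of $\operatorname{B}^1$ and of $\operatorname{B}^1_{\mathrm{mot}}$), and thereby realize the summand as an abelian variety $J'/K$ up to isogeny inside $\pic^\circ_{C/K}$, with the final correspondence given by composing $\gamma$ with the transpose of the graph of $C \to \pic^\circ_{C/K}\twoheadrightarrow J'$. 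One small caveat on presentation: the decisive input for the algebraicity of the idempotent is Lefschetz $(1,1)$ (as you note mid-argument), not the Lefschetz standard conjecture in degree one mentioned in your final paragraph, though of course both hold in this range.
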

 \begin{proof}
  Let $C$ and $\gamma \in \operatorname{CH}^{n+1}(C\times_K
  X)_{\mathbb{Q}}$ be the pointed curve and the correspondence provided by
  Proposition \ref{P:niveauK}.
  By Proposition \ref{P:mot} and its proof, there is an idempotent motivated
  correspondence $q \in \operatorname{B}^{1}_{\mathrm{mot}}(C\times_K C)_\rat$ 
  such that 
  $q_*H^1(C_{\bar K}, \rat_\ell)  \stackrel{\gamma_*}{\longrightarrow}
  H^{2n+1}(X_{\bar
   K},\rat_\ell(n))$ is a monomorphism of
  $\operatorname{Gal}(K)$-representations with image $
  \coniveau^nH^{2n+1}(X_{\bar
   K},\rat_\ell(n))$, which is itself a direct summand of $H^{2n+1}(X_{\bar
   K},\rat_\ell(n))$.

  Now we claim that for smooth projective
  varieties defined over a field of characteristic zero,
  we have 
  $\operatorname{B}^{1}_{\mathrm{mot}}(-)_\rat = \operatorname{B}^{1}(-)_\rat$. 
  Over  an algebraically closed field of characteristic zero this
  is a consequence of the Lefschetz $(1,1)$-theorem. Over a field $K$ of
  characteristic zero, the claim follows from the following two facts\,: (1) if
  $Y$ is a smooth projective variety over $K$, then 
  $\operatorname{B}^{r}(Y)_\rat $ consists of the
  $\operatorname{Gal}(K)$-invariant classes in $\operatorname{B}^{r}(Y_{\bar
   K})_\rat$ by a standard norm argument, and   similarly (2)
  $\operatorname{B}_{\mathrm{mot}}^{r}(Y)_\rat $ consists of the
  $\operatorname{Gal}(K)$-invariant classes in
  $\operatorname{B}_{\mathrm{mot}}^{r}(Y_{\bar
   K})_\rat$ by \cite[Scolie 2.5]{AndreIHES}.

  Therefore the motivated idempotent $q$ is in fact an idempotent correspondence
  in $\operatorname{B}^{1}(C\times_K C)_\rat$, and
  thus
  defines, up to isogeny, an idempotent endomorphism of $\pic^\circ(C)$.
  Its image $J'$, which is only defined up to isogeny, 
  is the sought-after abelian variety such that $q_*H^1(C_{\bar K}, \rat_\ell)
  \cong H^1(J'_{\bar K}, \rat_\ell)$. Composing the transpose of the graph of the
  morphism $C \hookrightarrow \pic^\circ(C) \twoheadrightarrow J'$ with
  the algebraic correspondence $\gamma$ yields the desired correspondence $\Gamma'
  \in \operatorname{CH}^{\dim J'+n}(J'\times_K X)$. 
 \end{proof}
 
 \begin{rem}
  The main difference with \cite[Thm.~2.1]{ACMVdcg} is that we do not know if the 
  splitting in Theorem \ref{T:mot}
  is
  induced by an algebraic correspondence over $K$. In that respect
  \cite[Thm.~2.1]{ACMVdcg}
  is more precise. \end{rem}

 A nice consequence of Proposition \ref{P:mot}  is the
 following\,:

 \begin{cor}\label{C:CNFSplit}
  Let $X$ be a smooth projective variety over a field $K\subseteq \mathbb C$.
  The geometric coniveau filtration on the $\operatorname{Gal}(K)$-representation
  $H^n(X_{\bar K},\rat_\ell)$ is split.
 \end{cor}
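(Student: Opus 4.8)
The plan is to realize the geometric coniveau filtration on $H^n(X_{\bar K},\rat_\ell)$ as the $\ell$-adic realization of a filtration by submotives of $\mathfrak h^n(X)$ in André's semisimple category $\mathscr M$ of motivated motives over $K$, where such a filtration automatically splits, and then to descend the resulting splitting to a decomposition of $\operatorname{Gal}(K)$-representations using \cite[Scolie~2.5]{AndreIHES}.

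First I would recall, exactly as in the proof of Proposition \ref{P:niveauK} (via the characterization of coniveau in \cite[(1.2)]{ACMVdcg}), that for each $\nu$ there exist a smooth projective variety $Y_\nu$ over $K$ of pure dimension $d_X-\nu$ and a $K$-morphism $f_\nu\colon Y_\nu\to X$ whose Gysin map
$$(f_\nu)_*\colon H^{n-2\nu}(Y_{\nu,\bar K},\rat_\ell(-\nu))\longrightarrow H^n(X_{\bar K},\rat_\ell)$$
has image $\coniveau^\nu H^n(X_{\bar K},\rat_\ell)$ (and similarly for Betti cohomology, compatibly under the comparison isomorphism). The graph of $f_\nu$ is an algebraic, hence motivated, correspondence, so $(f_\nu)_*$ is the realization of a morphism $\mathfrak h^{n-2\nu}(Y_\nu)(-\nu)\to\mathfrak h^n(X)$ in $\mathscr M$; by semisimplicity of $\mathscr M$ (\cite[Thm.~0.4]{AndreIHES}), as in the proof of Proposition \ref{P:mot}, its image is a submotive $M^\nu\subseteq\mathfrak h^n(X)$ whose $\ell$-adic realization is $\coniveau^\nu H^n(X_{\bar K},\rat_\ell)$.

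Next I would check that the $M^\nu$ form a decreasing chain of submotives: the coniveau filtration is decreasing by definition, so after realization $\coniveau^{\nu+1}H^n\subseteq\coniveau^\nu H^n$, and since the $\ell$-adic realization is a faithful exact fiber functor on the Tannakian category $\mathscr M$, this lifts to inclusions $M^{\nu+1}\subseteq M^\nu$ in $\mathscr M$. Since $\mathscr M$ is abelian and semisimple, the finite decreasing filtration $\mathfrak h^n(X)=M^0\supseteq M^1\supseteq\cdots\supseteq M^{N+1}=0$ splits: choosing inductively a complement $N^\nu$ of $M^{\nu+1}$ in $M^\nu$ produces submotives $N^\nu$ with $M^\nu=\bigoplus_{\mu\ge\nu}N^\mu$.

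Finally I would apply the $\ell$-adic realization functor: it is exact and $\rat_\ell$-linear, and by \cite[Scolie~2.5]{AndreIHES} the idempotents in $\operatorname{End}_{\mathscr M}(\mathfrak h^n(X))$ cutting out the $N^\nu$ — being morphisms in $\mathscr M$ over $K$ — realize to $\operatorname{Gal}(K)$-equivariant projectors, so the decomposition $\mathfrak h^n(X)=\bigoplus_\nu N^\nu$ realizes to a splitting of the coniveau filtration of $H^n(X_{\bar K},\rat_\ell)$ as a $\operatorname{Gal}(K)$-representation. The one point requiring care — already settled in the proof of Proposition \ref{P:mot} — is that one must work with motivated rather than merely algebraic correspondences over $K$ in order to have semisimplicity available and to obtain a \emph{Galois-equivariant} splitting; granting that, the passage from the individual ``direct summand'' statements to a splitting of the whole filtration is the new ingredient and is formal, using only semisimplicity of $\mathscr M$ and faithfulness of the realization functor.
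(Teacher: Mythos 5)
Your proof is correct and takes essentially the same approach as the paper: both realize each step $\coniveau^\nu H^n$ as the image of a correspondence from a lower-dimensional $K$-variety, and both exploit André's semisimplicity result through Proposition~\ref{P:mot}. The paper compresses the final step into a citation of Proposition~\ref{P:mot} together with the Krull--Schmidt theorem, whereas you unpack it by lifting the entire filtration to a chain of submotives $M^\nu\subseteq\mathfrak h^n(X)$ in $\mathscr M$, splitting there by semisimplicity, and realizing back; this is a transparent way to implement the compatibility of the complements that the paper leaves implicit. Your remark that the inclusions $M^{\nu+1}\subseteq M^\nu$ can be recovered from the nesting of realizations via faithfulness of the fiber functor is a detail worth stating, and is correct.
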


 \begin{proof}
  Let $r$ be a  nonnegative integer. 
  Using the coniveau hypothesis,  resolution of singularities, mixed Hodge
  theory, and comparison isomorphisms, there exist a smooth
  projective variety
  $Y$ of dimension $\dim X-r$ over $K$
  and a morphism $f : Y \to X$  such that the induced morphism of
  $\operatorname{Gal}(K)$-representations
  $$
   f_*: H^{n-2r}(Y_{\overline K},{\mathbb{Q}}_\ell(-r)) \rightarrow
   H^{n}(X_{\overline K},{\mathbb{Q}}_\ell)
  $$ 
  has image
  $\coniveau^rH^{n}(X_{\overline K},{\mathbb{Q}}_\ell)$\,; see e.g.
  \cite[Sec. 4.4(d)]{illusiemiscellany}. 
  The splitting of the coniveau filtration
  follows from Proposition \ref{P:mot} and the Krull--Schmidt theorem. 
 \end{proof}
 
  \begin{proof}[Proof of Theorem \ref{T:main}]
  Everything except for the splitting of the inclusion \eqref{E:Tmain-1}  in
  Theorem \ref{T:main} is shown by combining  Theorem \ref{T:JacDesc} with  Propositions \ref{P:Tors-Gen'} and \ref{P:Tors-Gen}.  The splitting follows
  from Corollary \ref{C:CNFSplit}.   
 \end{proof}

 \section{A functoriality statement}

 Recall that if $X$ and $Y$ are  smooth projective varieties over a field
 $K\subseteq \cx$, and we are given   a
 correspondence $Z\in \operatorname{CH}^{m-n+\dim X}(X\times_K Y)$, then $Z$ induces functorially a homomorphism of complex abelian
 varieties $$\psi_{Z_\cx} : J^{2n+1}_a(X_\cx) \to J^{2m+1}_a(Y_\cx)$$
 that is compatible with the Abel--Jacobi map.
 
 \begin{pro}
 	\label{P:functoriality}
 	Denote $J$ and $J'$ the distinguished models of $ J^{2n+1}_a(X_\cx) $ and $
 	J^{2m+1}_a(Y_\cx)$ over $K$. Then
 	the homomorphism $\psi_{Z_\cx}$ descends to a $K$-homomorphism of abelian
 	varieties $\psi_Z : J \to J'$. 
 	
 	In particular, given a morphism $f:X\to Y$ defined over $K$,  the graph of $f$ and its transpose induce homomorphisms
 	$$f_* : J^{2n+1}_{X/K}\to J^{2(n-\dim X+\dim Y)+1}_{Y/K} \quad \text{and}\quad f^* : J^{2n+1}_{Y/K}\to J^{2n+1}_{X/K}.$$ 
 	This makes our descent functorial for morphisms of smooth projective varieties over $K$. 
 \end{pro}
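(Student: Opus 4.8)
The strategy is to transport the universal property of the distinguished models through the correspondence $Z$. Write $AJ_X : \operatorname{A}^{n+1}(X_\cx)\to J(\cx)$ and $AJ_Y : \operatorname{A}^{m+1}(Y_\cx)\to J'(\cx)$ for the Abel--Jacobi maps, which by Theorem~\ref{T:main} are $\operatorname{Aut}(\cx/K)$-equivariant for the $K$-structures $J$ and $J'$, and recall that $AJ_X$ is surjective by construction. First I would note that, because the correspondence $Z$ is defined over $K$, the induced homomorphism $Z_* : \operatorname{A}^{n+1}(X_\cx)\to\operatorname{A}^{m+1}(Y_\cx)$ is $\operatorname{Aut}(\cx/K)$-equivariant for the natural Galois actions on cycle groups; this is the compatibility of the correspondence calculus with extension of scalars already used (for the map $w_Z$) in the proof of Proposition~\ref{P:Tors-Gen}, cf.~\cite[Rem.~4.3]{ACMVdcg}. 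By the compatibility of $\psi_{Z_\cx}$ with the Abel--Jacobi maps recalled just above the proposition, we have $\psi_{Z_\cx}\circ AJ_X = AJ_Y\circ Z_*$.

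Now I would invoke Lemma~\ref{L:ez-emma}(a) with $G=\operatorname{Aut}(\cx/K)$, $\phi = AJ_X$ and $\psi = \psi_{Z_\cx}$: here $\phi$ is surjective and $G$-equivariant, while $\psi\circ\phi = AJ_Y\circ Z_*$ is $G$-equivariant as a composite of $G$-equivariant maps, so $\psi_{Z_\cx} : J(\cx)\to J'(\cx)$ is $\operatorname{Aut}(\cx/K)$-equivariant. It then remains to descend this equivariant homomorphism. Since $\psi_{Z_\cx}$ is a homomorphism of the complex abelian varieties underlying $J$ and $J'$, its graph is a closed subvariety of $(J\times_K J')_\cx$ whose set of $\cx$-points is $\operatorname{Aut}(\cx/K)$-stable, precisely because $\psi_{Z_\cx}$ is equivariant; hence, by the descent criterion used in the proof of Lemma~\ref{L:torsdescends}, it descends to a closed subvariety $G\subseteq J\times_K J'$ over $K$. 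The first projection $G\to J$ is an isomorphism, being one after the faithfully flat base change to $\cx$, so $G$ is the graph of a $K$-morphism $\psi_Z : J\to J'$; and $\psi_Z$ is a homomorphism, since it maps $0$ to $0$ (an equality of $K$-points, valid over $\cx$). This proves the first assertion.

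For a $K$-morphism $f : X\to Y$ one applies this to the correspondences attached to $f$: the graph $\Gamma_f$ has dimension $\dim X$ in $X\times_K Y$, hence lies in $\operatorname{CH}^{\dim Y}(X\times_K Y)=\operatorname{CH}^{m-n+\dim X}(X\times_K Y)$ for $m=n-\dim X+\dim Y$, yielding $f_* : J^{2n+1}_{X/K}\to J^{2(n-\dim X+\dim Y)+1}_{Y/K}$, while the transpose $\Gamma_f^{t}\in\operatorname{CH}^{\dim Y}(Y\times_K X)$ yields $f^* : J^{2n+1}_{Y/K}\to J^{2n+1}_{X/K}$. Functoriality in $f$ (compatibility with composition and with identities) then follows from the known functoriality of the maps $\psi_{(-)_\cx}$ over $\cx$ together with the fact that a $K$-homomorphism of abelian varieties over $K$ is determined by its base change to $\cx$. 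I do not anticipate a genuine obstacle here: all the substance is contained in Theorem~\ref{T:main}, and the only steps requiring a word of justification are the $\operatorname{Aut}(\cx/K)$-equivariance of $Z_*$ and the descent of an equivariant homomorphism of abelian varieties, both of which are routine.
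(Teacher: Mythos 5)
Your proof is correct and takes essentially the same approach as the paper: both identify $\operatorname{Aut}(\cx/K)$-equivariance of $\psi_{Z_\cx}$ by applying Lemma~\ref{L:ez-emma}(a) to the commutative square $\psi_{Z_\cx}\circ AJ_X = AJ_Y\circ Z_*$, using surjectivity of $AJ_X$ and equivariance of $AJ_X$, $AJ_Y$, and $Z_*$. The only divergence is in the final descent step: the paper's primary argument descends in two stages, first applying the $\cx/\bar K$-trace to obtain a $\bar K$-morphism and then Galois-descending from $\bar K$ to $K$, whereas you descend in one step by showing the graph of $\psi_{Z_\cx}$ in $(J\times_K J')_\cx$ has $\operatorname{Aut}(\cx/K)$-stable $\cx$-points and invoking the descent criterion from the proof of Lemma~\ref{L:torsdescends}. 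The paper itself notes your route as an alternative (``Alternately, $\psi_{Z_\cx}$ descends to $K$ simply by $\cx/K$-descent''), so the two are interchangeable; your version is more self-contained, while the paper's trace-based version reuses machinery already set up in \S\ref{S:rigidity}.
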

 \begin{proof}
 	By Theorem \ref{T:main}, the
 	Abel--Jacobi map $AJ:\operatorname{A}^{n+1}(X_\cx) \to J_a^{2n+1}(X_\cx)$ is
 	$\aut(\cx/K)$-equivariant. Applying Lemma \ref{L:ez-emma}(a) to the commutative
 	square
 	$$\xymatrix@C=3.5em {\operatorname{A}^{n+1}(X_\cx) \ar@{->>}[r]^{AJ}
 		\ar[d]_{(Z_{\cx})_*} & J_a^{2n+1}(X_\cx) \ar[d]^{\psi_{Z_\cx}}\\
 		\operatorname{A}^{m+1}(Y_\cx) \ar@{->>}[r]^{AJ}  & J_a^{2m+1}(Y_\cx)
 	}$$
 	shows that $\psi_{Z_\cx}$ is $\aut(\cx/K)$-equivariant.   From the theory of the $\mathbb C/\bar K$-trace, $\psi_{Z_\cx}$
 	descends to a morphism $\LKtrace \psi_{Z_\cx}: \LKtrace{J}^{2n+1}_a(X_{\mathbb
 		C})\to \LKtrace J_a^{2m+1}(Y_\cx)$ over  $\bar K$.  Then the 
 	$\aut(\cx/K)$-equivariance of $\psi_{Z_\cx}$ on $\mathbb C$-points implies
 	$\LKtrace \psi_{Z_\cx}$ is $\operatorname{Gal}(\bar K/K)$-equivariant on $\bar
 	K$-points, and so  descends from $\bar K$ to $K$. Alternately, $\psi_{Z_\cx}$ descends to $K$ simply by $\cx/K$-descent. 
	 \end{proof}
 
 \begin{rem}
Proposition \ref{P:functoriality} could have been proved earlier by using Theorem \ref{T:JacDesc}, together with the fact (see Lemma \ref{L:Tor-to-Tor}(b)) that $AJ[N]:\operatorname{A}^{n+1}(X_{\mathbb C})[N]\to
J_a^{2n+1}(X_{\mathbb C})[N]$ is surjective for all $N$ not divisible by a finite number of fixed primes and the fact that torsion points on an abelian variety of order  not divisible by a finite number of fixed primes are dense.
 \end{rem}

 \section{Deligne's theorem on complete intersections of
  Hodge level $1$} \label{S:Deligne}
 
 We recapture Deligne's result \cite{deligneniveau} on intermediate Jacobians of
 complete
 intersections of Hodge level $1$ (Deligne's primary motivation was to establish
 the Weil conjectures for those varieties\,; of course Deligne established the
 Weil conjectures in full generality a few years later)\,:
 
 \begin{teo}[Deligne \cite{deligneniveau}]
  Let $X$ be a smooth complete intersection of odd dimension $2n+1$ over a field
  $K\subseteq \cx$. Assume that $X$ has Hodge level $1$, that is, assume that
  $h^{p,q}(X_\cx) = 0$ for all $|p-q| >1$. Then the intermediate Jacobian
  $J^{2n+1}(X_\cx)$ is a complex  abelian variety that  is defined over $K$.
 \end{teo}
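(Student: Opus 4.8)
The plan is to derive the theorem directly from Theorem~\ref{T:main}, once one knows that for a complete intersection of Hodge level~$1$ the Abel--Jacobi map already surjects onto the entire intermediate Jacobian, i.e.\ that $J^{2n+1}_a(X_\cx) = J^{2n+1}(X_\cx)$. First I would record the standard structural facts about a smooth complete intersection $X$ of odd dimension $2n+1$: by the Lefschetz hyperplane theorem the cohomology of $X_\cx$ agrees with that of the ambient projective space outside the middle degree $2n+1$; the group $H^{2n+1}(X_\cx,\integ)$ is torsion-free, and it is entirely primitive since there is no odd-degree cohomology in lower degree for the Lefschetz operator to carry into it. The Hodge level~$1$ hypothesis says $H^{2n+1}(X_\cx,\cx) = H^{n,n+1}\oplus H^{n+1,n}$; hence $H^{2n+1}(X_\cx,\integ(n))$ is an effective weight-$1$ Hodge structure, polarized (up to sign) by the cup-product pairing via the Hodge--Riemann bilinear relations, and $F^{n+1}H^{2n+1}(X_\cx,\cx) = H^{n+1,n}$. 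Thus $J^{2n+1}(X_\cx)$ is, by its very definition, the polarized complex abelian variety attached to this weight-$1$ Hodge structure, with $H^1(J^{2n+1}(X_\cx),\rat) \cong H^{2n+1}(X_\cx,\rat(n))$.

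Next I would invoke Deligne's theorem from \cite{deligneniveau} --- equivalently, the generalized Hodge conjecture, which is known for complete intersections of Hodge level~$1$ --- to the effect that the geometric coniveau of $H^{2n+1}(X_\cx)$ equals its Hodge coniveau $n$, so that $\coniveau^n H^{2n+1}(X_\cx,\rat(n)) = H^{2n+1}(X_\cx,\rat(n))$. Since, with the conventions recalled in the introduction, $H^1(J^{2n+1}_a(X_\cx),\rat)$ is identified with $\coniveau^n H^{2n+1}(X_\cx,\rat(n))$ inside $H^1(J^{2n+1}(X_\cx),\rat) = H^{2n+1}(X_\cx,\rat(n))$, the sub-abelian variety $J^{2n+1}_a(X_\cx) \subseteq J^{2n+1}(X_\cx)$ has the same first cohomology group as $J^{2n+1}(X_\cx)$, hence the same dimension, and is therefore equal to it. (Equivalently, the Abel--Jacobi map $AJ:\A^{n+1}(X_\cx) \to J^{2n+1}(X_\cx)$ is surjective.)

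It then suffices to apply Theorem~\ref{T:main} to $X/K$: it produces a distinguished model $J$ over $K$ of the complex abelian variety $J^{2n+1}_a(X_\cx)$, and by the previous paragraph $J^{2n+1}_a(X_\cx) = J^{2n+1}(X_\cx)$, so $J$ is a model of $J^{2n+1}(X_\cx)$ over $K$, as desired. The one non-formal ingredient is precisely the equality $J^{2n+1}_a(X_\cx) = J^{2n+1}(X_\cx)$, that is, Deligne's geometric-coniveau computation for complete intersections of Hodge level~$1$; this is the part of \cite{deligneniveau} we borrow, and where the real geometric content sits. Granting it, the descent of $J^{2n+1}(X_\cx)$ to $K$ --- which Deligne's argument also established, en route to the Weil conjectures for these varieties --- becomes a formal consequence of Theorem~\ref{T:main}, and this is the sense in which the result is ``recovered'' rather than reproved: a self-contained argument would have to re-establish the coniveau statement, which is the main obstacle.
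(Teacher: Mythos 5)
Your proposal is logically sound and follows the same high-level template as the paper's own proof: both reduce the theorem to the single geometric input that the Abel--Jacobi map $AJ:\A^{n+1}(X_\cx)\to J^{2n+1}(X_\cx)$ is surjective (equivalently, $J^{2n+1}_a(X_\cx)=J^{2n+1}(X_\cx)$), and then apply the descent statement of Theorem~\ref{T:main} (more precisely Theorem~\ref{T:JacDesc}). Where you diverge is in how that surjectivity is established. You invoke Deligne's own coniveau computation from \cite{deligneniveau} (the generalized Hodge conjecture for complete intersections of Hodge level~$1$), i.e.\ $\coniveau^nH^{2n+1}(X_\cx,\rat(n))=H^{2n+1}(X_\cx,\rat(n))$, which via the standard identification $H^1(J^{2n+1}_a(X_\cx),\rat)\cong\coniveau^nH^{2n+1}(X_\cx,\rat(n))$ forces $J_a=J$. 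The paper deliberately avoids re-using Deligne's geometry: it instead classifies the finitely many numerical types $V_m(a_1,\dots,a_k)$ of odd-dimensional complete intersections of Hodge level~$1$ (following Rapoport's table), observes that the threefold cases are Fano hence have $\chow_0(X_\cx)=\integ$, cites Otwinowska's theorem \cite{otw} that in the remaining cases $\chow_0(X_\cx)_\rat,\dots,\chow_{n-1}(X_\cx)_\rat$ are spanned by linear sections, and then applies a Bloch--Srinivas/Esnault--Levine decomposition-of-the-diagonal argument \cite{esnaultlevine,BlSr83} to conclude surjectivity of $AJ$. This difference is not merely cosmetic: since the goal is to \emph{recapture} Deligne's descent theorem, borrowing his coniveau computation as the key nontrivial ingredient makes your recovery feel circular in spirit (even if it is not logically circular, as the coniveau statement and the descent statement are distinct). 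The paper's route supplies an independent, Chow-theoretic proof of the needed surjectivity, so that the only input shared with \cite{deligneniveau} is the elementary Hodge-theoretic observation that $J^{2n+1}(X_\cx)$ is a polarized abelian variety; this is what makes the claim of giving an ``alternate proof'' genuine.
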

 
 \begin{proof} First note that the assumption that $X$ has Hodge level 1 implies
  that the cup product on $H^{2n+1}(X_\cx,\mathbb Z)$ endows the complex torus
  $J^{2n+1}(X_\cx)$ with a Riemannian form so that $J^{2n+1}(X_\cx)$ is naturally a
  principally polarized complex abelian variety. 
  Deligne's proof that this complex abelian variety is defined over $K$ uses the
  irreducibility of the monodromy action of the fundamental group of the universal
  deformation of $X$  on $H^{2n+1}(X_\cx,\rat)$ and on
  $H^{2n+1}(X_\cx,\mathbb Z / \ell)$ for all primes $\ell$. Here, we give an
  alternate proof based on our Theorem \ref{T:main}.
  
  Denote $V_m(a_1,\ldots,a_k)$ a smooth complete intersection of dimension $n$ of
  multi-degree $(a_1,\ldots,a_k)$ inside $\mathbb P^{m+k}$. A complete
  intersection $X$ of
  Hodge level $1$ of odd dimension is of one of the following types\,:
  $V_{2n+1}(2), V_{2n+1}(2,2), V_{2n+1}(2,2,2), V_3(3), V_3(2,3), V_5(3),
  V_3(4)$\,; see for instance \cite[Table 1]{rapoport}.
  In the cases where $X$ is one of the above and $X$ has dimension $3$, then $X$
  is Fano and as such is rationally connected, and therefore
  $\operatorname{CH}_0(X_\cx) = \mathbb Z$. In all of the other listed
  cases, 
  it is known \cite[Cor.~1]{otw} that $\operatorname{CH}_0(X_\cx)_\rat,\ldots,
  \operatorname{CH}_{n-1}(X_\cx)_\rat$ are spanned by linear sections. 
  By  \cite[Thm.~3.2]{esnaultlevine}, which is based on a decomposition of the diagonal argument \cite{BlSr83}, it follows that if $X$ is
  a complete intersection of Hodge level 1, then the
  Abel--Jacobi map $\operatorname{A}^n(X_\cx) \to J^{2n+1}(X_\cx)$ is surjective,
  i.e., that $J^{2n+1}(V_{\mathbb C}) = J^{2n+1}_a(V_{\mathbb C})$. 
  Theorem \ref{T:JacDesc} implies that the complex abelian
  variety $J^{2n+1}(X_{\mathbb C})$ has a distinguished model over $K$. 
 \end{proof}

 \section{Albaneses of Hilbert schemes} \label{S:Hilb}

  Over the complex numbers the image of the Abel--Jacobi map is dominated by
  Albaneses of resolutions of singularities of products of irreducible components
  of Hilbert  
  schemes.  
  Since Hilbert schemes are functorial, and in particular  defined over the field of definition, and
  since the image of the Abel--Jacobi map descends to the field of definition,  one might expect this abelian variety to be dominated by   Albaneses of resolutions of singularities of  products of irreducible components of Hilbert  schemes defined over the field of definition.
 In this section, we show this is the case, thereby proving 
 Theorem  
  \ref{Ta:MazQ1}.   Our  approach  utilizes the theory of Galois equivariant regular homomorphisms, and consequently, we obtain some related results over perfect fields in  arbitrary characteristic.  
  
    \medskip

 \subsection{Regular homomorphisms  and difference maps} \label{S:DiffMaps}
 In this section we give an equivalent theory of regular homomorphisms  and
 algebraic
 representatives that  does not rely on pointed varieties.  
 
 Let $X/k$ be a smooth projective variety over the algebraically closed
 field $k$, let
 $T/k$ be a smooth
 integral   
 variety and let $Z$ be a codimension-$i$ cycle on $T \times_{k}X$.
 Let  $p_{13},p_{23}:  T\cross_{k} T \cross_{k}  X
 \to   T\cross_{k}   X$ be the obvious projections.
 Let $\widetilde {
  Z}$ be defined as the cycle  
 \[
 \widetilde {Z}:= p_{13}^*Z-p_{23}^*Z 
 \] 
 on $T\times_{k}  T\times_{k}  X$.
 For points $t_1,t_0\in  T({k})$, 
 we have $
 \til {Z}_{(t_1,t_0)} = Z_{t_1} - Z_{t_0}$.  
 We therefore have a map  
 \begin{equation}
 \label{E:defdiffk}
 \xymatrix@R=.1cm{
  (T\cross_{k} T)({k}) \ar[r]^<>(0.5){y_{Z}} & \A^i(X) \\
  (t_1,t_0) \ar@{|->}[r]&  Z_{t_1} - Z_{t_0}.
 }
 \end{equation}

 \begin{lem} \label{L:DiffReg}
  Let $X/k$ be a smooth projective variety over an algebraically closed field
  ${k}$, and let $A/k$ be an abelian variety.
  A homomorphism of groups
  $
  \phi:
  \operatorname{A}^i(X)\to  A(k)
  $
  is regular if and only if for every pair $(T,  Z)$ with $T$   a
  smooth integral  
  variety over ${k}$
  and
  $Z\in \operatorname{CH}^i( T\times_{k}  X)$,  
  the composition 
  $$
  \begin{CD}
  (T\times_{k}T)({k})@> y_{Z} >> \operatorname{A}^i(X)@>\phi >>A({k})
  \end{CD}
  $$ 
  is induced by a morphism of varieties $\xi_{ Z}:T\times_{k} T\to A$.
 \end{lem}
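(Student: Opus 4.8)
The plan is to deduce the lemma formally from the definition of a regular homomorphism, by observing that the difference map $y_Z$ of \eqref{E:defdiffk} is, after adjoining an auxiliary base point, of the form $w_{\widetilde{Z}}$ for the cycle $\widetilde{Z}=p_{13}^*Z-p_{23}^*Z$ on $T\times_k T\times_k X$, and, conversely, that every $w_Z$ is recovered from a given $\xi_Z$ by restriction along a ``constant second coordinate'' section.

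For the ``only if'' direction, suppose $\phi$ is regular and let $(T,Z)$ be a pair with $T$ a smooth integral variety over $k$ and $Z\in\operatorname{CH}^i(T\times_k X)$. First I would record that $T\times_k T$ is again a smooth integral variety --- using that $T$ is geometrically integral since $k=\bar k$ --- and fix any $t_0\in T(k)$, so that $(T\times_k T,\widetilde{Z})$ becomes a pointed variety with base point $(t_0,t_0)$. By the identity $\widetilde{Z}_{(t_1,t_0)}=Z_{t_1}-Z_{t_0}$ recorded just above (the standard functoriality of the refined Gysin fiber under the flat projections $p_{13},p_{23}$), one has $\widetilde{Z}_{(t_0,t_0)}=0$ and hence $w_{\widetilde{Z}}=y_Z$ as maps $(T\times_k T)(k)\to\operatorname{A}^i(X)$. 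Applying regularity of $\phi$ to the pointed pair $(T\times_k T,\widetilde{Z})$ produces a morphism $\psi_{\widetilde{Z}}\colon T\times_k T\to A$ inducing $\phi\circ w_{\widetilde{Z}}=\phi\circ y_Z$, and one sets $\xi_Z:=\psi_{\widetilde{Z}}$.

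For the ``if'' direction, suppose that for every pair $(T,Z)$ the composite $\phi\circ y_Z$ is induced by a morphism $\xi_Z\colon T\times_k T\to A$, and let $(T,t_0)$ be a pointed smooth integral variety with $Z\in\operatorname{CH}^i(T\times_k X)$. I would take $\psi_Z:=\xi_Z\circ s_{t_0}$, where $s_{t_0}\colon T\to T\times_k T$ is the morphism $t\mapsto(t,t_0)$. On $k$-points this reads $\psi_Z(t)=\xi_Z(t,t_0)=\phi(y_Z(t,t_0))=\phi(Z_t-Z_{t_0})=\phi(w_Z(t))$, so the morphism $\psi_Z$ induces $\phi\circ w_Z$; hence $\phi$ is regular. (In particular $\psi_Z(t_0)=\phi(0)=0$, consistently with $w_Z(t_0)=0$.)

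I do not anticipate any genuine obstacle here: the mathematical content reduces to the bookkeeping identity $\widetilde{Z}_{(t_1,t_0)}=Z_{t_1}-Z_{t_0}$ --- the compatibility of refined Gysin pullback with flat base change, already asserted in the paragraph preceding the lemma --- together with the elementary remarks that $T\times_k T$ is again smooth and integral over $k$ and that $\xi_Z$, resp.\ $\psi_Z$, is only required to reproduce the relevant homomorphism on $k$-points. The single point worth a word is that the auxiliary base point $t_0$ chosen in the first direction is immaterial, which is clear because regularity is quantified over \emph{all} pointed pairs $(T,Z)$.
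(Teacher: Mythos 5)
Your proof is correct and takes essentially the same route as the paper: the forward direction uses regularity for the pointed pair $(T\times_k T, \widetilde Z)$ with diagonal base point $(t_0,t_0)$ and the identity $w_{\widetilde Z}=y_Z$; the converse recovers $\psi_Z$ by restricting $\xi_Z$ along the section $t\mapsto (t,t_0)$, which is precisely the paper's inclusion $\iota\colon T\hookrightarrow T\times\{t_0\}$.
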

 
 \begin{proof}
   If $\phi:\A^i(X) \to A(k)$ is a regular homomorphism to an
   abelian variety, then $\phi\circ y_{ Z}$ is induced by a morphism
   of varieties $T\cross_{k} T \to A$\,; indeed after choosing
   any diagonal base point
   $(t_0,t_0)\in (T\times_{k} T)(k)$, the maps
   $\phi\circ y_{ Z}$ and $\phi\circ w_{\widetilde Z,(t_0,t_0)}$
   agree.  Conversely, suppose $\phi\circ y_{ Z}$ is induced by a
   morphism $\xi_{Z}$ of varieties, and let $t_0 \in T({k})$ be
   any base point.  Let $\iota$ be the inclusion
   $\iota: T \to T\times \st{t_0} \subset T\times T$.  Then
   $w_{Z,t_0} = y_{Z}|_{\iota(T)}$, and $\phi\circ w_Z$ is
   induced by the morphism $\xi_{ Z} \circ \iota$.
 \end{proof}

 Now suppose that $X$ is a smooth projective variety over $K$, that $T$ is a smooth integral 
 quasi-projective variety over $K$, and that $Z$ is a codimension-$i$ cycle on
 $T\times_K X$.  The cycle $\til Z = p_{13}^*Z -p_{23}^*Z$ on $T\times_K T \times_K X$
 is again defined over $K$.
 
 \begin{lem}
  \label{L:DiffDesc}
  Let $K$ be a perfect field, suppose $X$, $Z$ and $T$ are as above, and let $A/K$ be an abelian variety.  If
  $\phi:
  \A^i(X_{\bar K}) \to A(\bar K)$ is a $\gal(K)$-equivariant
  regular homomorphism, then the induced morphism $\xi_{Z_{\bar K}}:
  (T\times_K T)_{\bar K} \to A_{\bar K}$ is also $ \gal(K)$-equivariant on $\bar
  K$-points, and thus $\xi_{Z_{\bar K}}$ descends to a morphism
  $\xi_Z: T \times_K T \to A$ of varieties over $K$.
 \end{lem}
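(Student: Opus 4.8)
The plan is to reduce Lemma \ref{L:DiffDesc} to the well-known descent criterion for morphisms between varieties over a field, namely that a morphism of $\bar K$-varieties descends to $K$ if and only if it is $\gal(K)$-equivariant on $\bar K$-points (using that $K$ is perfect, so that $\bar K/K$ is Galois with $\bar K^{\gal(K)} = K$). So the real content is the equivariance of $\xi_{Z_{\bar K}}$, and for that I would exploit the explicit description of $\xi_{Z_{\bar K}}$ in terms of the difference map $y_{Z_{\bar K}}$ from \eqref{E:defdiffk} together with Lemma \ref{L:DiffReg}.

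First I would recall that by Lemma \ref{L:DiffReg}, the composition $\phi \circ y_{Z_{\bar K}} : (T\times_K T)(\bar K) \to A(\bar K)$ is precisely the map induced by the morphism $\xi_{Z_{\bar K}}$. So it suffices to show that $\phi\circ y_{Z_{\bar K}}$ is $\gal(K)$-equivariant as a map of sets with $\gal(K)$-action. For a point $(t_1,t_0)\in (T\times_K T)(\bar K)$ and $\sigma\in\gal(K)$, equivariance amounts to the identity $\phi\big(\sigma^* (Z_{t_1}-Z_{t_0})\big) = \sigma^*\big(\phi(Z_{t_1}-Z_{t_0})\big)$. The left-hand side is handled in two steps: since $Z$ is defined over $K$, we have $\sigma^* Z_{t_i} = Z_{\sigma(t_i)}$ (the formation of the refined Gysin fibre commutes with the Galois action, as the cycle $Z$ is $\sigma$-invariant), so $\sigma^* y_{Z_{\bar K}}(t_1,t_0) = y_{Z_{\bar K}}(\sigma(t_1),\sigma(t_0))$; and then the hypothesis that $\phi$ is $\gal(K)$-equivariant as a regular homomorphism gives $\phi(\sigma^*\alpha) = \sigma^*\phi(\alpha)$ for every $\alpha\in\A^i(X_{\bar K})$. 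Combining these two facts yields exactly the desired equivariance of $\xi_{Z_{\bar K}}$ on $\bar K$-points. (One can also phrase the first step via the already-established Lemma \ref{L:key}, applied with $B = T\times_K T$ — or rather its abelian-variety version via the Albanese — but the direct computation with refined Gysin fibres of a $K$-cycle is cleaner here since $T$ need not be an abelian variety.)

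Finally, with $\xi_{Z_{\bar K}}$ shown to be $\gal(K)$-equivariant, I would invoke Galois descent for morphisms of varieties: the graph $\Gamma_{\xi_{Z_{\bar K}}} \subseteq (T\times_K T)_{\bar K}\times_{\bar K} A_{\bar K}$ is a closed subvariety stable under $\gal(K)$, hence (by the same descent statement cited in the proof of Lemma \ref{L:torsdescends}, e.g.\ \cite[Prop.~6.8]{milneAG}) descends to a closed subvariety of $(T\times_K T)\times_K A$ over $K$, which is the graph of the sought morphism $\xi_Z : T\times_K T \to A$ over $K$. The only mild subtlety — the step I would flag as the main thing to be careful about — is verifying that $\sigma^*(Z_{t})=Z_{\sigma(t)}$ at the level of rational equivalence classes, i.e.\ that refined Gysin fibres of a cycle defined over $K$ transform correctly under $\aut(\bar K/K)$; this is a compatibility of the intersection-theoretic operations with base change and field automorphisms, which holds because all the constructions involved (flat pullback, refined Gysin homomorphisms, pushforward) are defined over $K$ and commute with the faithfully flat base change $\spec\bar K\to\spec K$ and its automorphisms.
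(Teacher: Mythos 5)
Your proof is correct and takes essentially the same route as the paper: both establish the $\gal(K)$-equivariance of $\phi\circ y_{Z_{\bar K}}$ by combining the equivariance of $y_{Z_{\bar K}}$ (since $\widetilde Z$, $T$, and $X$ are defined over $K$, so refined Gysin fibers behave compatibly with the Galois action) with the hypothesized equivariance of $\phi$, and then invoke Galois descent for morphisms of $\bar K$-varieties. The paper packages the first step as a commutative diagram citing \cite[Rem.~4.3]{ACMVdcg}, but the content matches what you wrote out explicitly.
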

 
 \begin{proof}
  For each $\sigma \in \gal(K)$
there is a commutative diagram
  (see \cite[Rem.~4.3]{ACMVdcg})
  $$
  \xymatrix@C=1.5cm@R=.75cm{
   (T\times_K T)({\bar K}) \ar@{->}[r]^{y_{ Z_{\bar K}}} 
   \ar@{->}[d]^{\sigma_{T\times
     T}^*} &   \operatorname{A}^i(X_{\bar K}) \ar@{->}[r]^{\phi}
   \ar@{->}[d]^{\sigma_X^{*}}  &
   A({\bar K}) \ar@{->}[d]^{\sigma_{A}^*}\\
   (T\times_K T)({\bar K})  \ar@{->}[r]^{y_{Z_{\bar K}}}
   &\operatorname{A}^i(X_{\bar K})
   \ar@{->}[r]^{\phi}& A({\bar K}). \\
  }
  $$
  Now $\phi$ is $\gal(K)$-equivariant by hypothesis, and
  $y_{Z_{\bar K}}$
  is $\gal(K)$-equivariant since $\til Z$, $T$ and $X$ are
  defined over $K$. Consequently, $\xi_{Z_{\bar K}}$ is
  $\gal(K)$-equivariant, as claimed.
 \end{proof}

 \subsection{Albaneses of Hilbert schemes   and  the Abel--Jacobi map}
 
 We are now in a position to prove the following theorem, which will allow us to
 prove  Theorem \ref{Ta:MazQ1}.

 \begin{teo}\label{T:MazQ1}
  Suppose $X$ is a smooth projective variety over a perfect  field $K$, and
  let
  $n$ be a nonnegative integer.
  Let $A/K$ be an abelian variety defined over $K$, and let 
  \[
  \xymatrix{
   \phi:\operatorname{A}^{n+1}(X_{\bar K})\ar@{->>}[r] &A_{\bar K}(\bar K)
  }
  \]
  be  a surjective Galois-equivariant regular homomorphism.
  Then there are a finite number of  irreducible components of the   Hilbert
  scheme $\operatorname{Hilb}^{n+1}_{X/K}$  
  parameterizing codimension $n+1$ subschemes
  of $X/K$,  so that by taking a  finite product $H$ of these components, and
  then denoting by 
  $\operatorname{Alb}_{\widetilde H/K}$  the Albanese variety of  a  smooth
  alteration $\widetilde H$ of  $H$,  there is a
  surjective  morphism 
  \begin{equation}
    \label{E:albanesetheorem}
  \xymatrix{
   \operatorname{Alb}_{\widetilde H/K} \ar@{->>}[r]&  A
  }
\end{equation}
of abelian varieties over $K$.
 \end{teo}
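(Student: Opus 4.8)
The plan is to realize $A$ as the image of an Albanese by running, over $\bar K$, the classical construction of $\operatorname{A}^{n+1}$ out of Hilbert-scheme families, and then to use the $\gal(K)$-equivariance of $\phi$ together with the difference-map formalism of \S\ref{S:DiffMaps} to descend everything to $K$. The only input that is not formal is the classical fact that $\operatorname{A}^{n+1}(X_{\bar K})$ is generated, as an abelian group, by classes of the form $Z_{t_1}-Z_{t_0}$, where $Z$ runs over universal families on the irreducible components of $\operatorname{Hilb}^{n+1}_{X/K}$ (pulled back along smooth alterations, so as to make sense in positive characteristic) and $t_0,t_1$ run over $\bar K$-points of a fixed geometric component of the parameter space; over $\cx$ this is the source of the assertion recalled in the introduction that $J_a^{2n+1}(X_\cx)$ is dominated by Albaneses of Hilbert-scheme components. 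Concretely: for each $K$-irreducible component $H_i$ of $\operatorname{Hilb}^{n+1}_{X/K}$, fix a smooth projective alteration $\widetilde{H_i}\to(H_i)_{\mathrm{red}}$ defined over $K$ (de Jong), and let $Z_i\in\operatorname{CH}^{n+1}(\widetilde{H_i}\times_K X)$ be the cycle associated to the pullback of the universal subscheme; $Z_i$ is defined over $K$.

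First I would, for each $i$, form the difference map $y_{Z_i}\colon(\widetilde{H_i}\times_K\widetilde{H_i})(\bar K)\to\operatorname{A}^{n+1}(X_{\bar K})$ of \eqref{E:defdiffk}. Since $\phi$ is a regular homomorphism, Lemma~\ref{L:DiffReg} produces a morphism $\xi_{i,\bar K}\colon(\widetilde{H_i}\times_K\widetilde{H_i})_{\bar K}\to A_{\bar K}$ inducing $\phi\circ y_{Z_i}$; since $\phi$ is $\gal(K)$-equivariant and $Z_i$, $\widetilde{H_i}$, $X$ are defined over $K$, Lemma~\ref{L:DiffDesc} shows $\xi_{i,\bar K}$ descends to a $K$-morphism $\xi_i\colon\widetilde{H_i}\times_K\widetilde{H_i}\to A$. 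As $\phi$ is additive, $y_{Z_i}$ — hence $\xi_i$ — satisfies the cocycle identity $\xi_i(t_1,t_2)=\xi_i(t_1,t_0)+\xi_i(t_0,t_2)$, so by the base-point-free universal property of the Albanese, $\xi_i$ factors as $\operatorname{alb}_i\circ(\text{difference morphism of }\widetilde{H_i})$ for a unique $K$-homomorphism of abelian varieties $\operatorname{alb}_i\colon\operatorname{Alb}_{\widetilde{H_i}/K}\to A$. (When $\widetilde{H_i}$ fails to be geometrically connected one forms the Albanese over the field of constants and then takes the Weil restriction, as in \cite[\S4]{ACMVdcg}.) Put $A_i:=\operatorname{Im}(\operatorname{alb}_i)$, an abelian subvariety of $A$ defined over $K$; in particular $\operatorname{Im}(\xi_i)\subseteq A_i(\bar K)$.

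Next I would extract the finite product. By the generation statement above and the surjectivity of $\phi$, the subgroups $A_i(\bar K)$ jointly generate $A(\bar K)$: any $\alpha\in\operatorname{A}^{n+1}(X_{\bar K})$ is a finite sum $\sum_j\pm\,y_{Z_{i_j}}(t_{1,j},t_{0,j})$, so $\phi(\alpha)\in\sum_j A_{i_j}(\bar K)$, and $\phi$ is onto. Now choose a finite set $S=\{i_1,\dots,i_k\}$ maximizing $\dim\!\big(\sum_{i\in S}A_i\big)$; for every index $j$ one must have $A_j\subseteq\sum_{i\in S}A_i$ (otherwise $\sum_{i\in S}A_i+A_j$ would be strictly larger), so $\sum_{i\in S}A_i$ contains a generating set of $A(\bar K)$, forcing $\sum_{i\in S}A_i=A$. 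Finally set $H:=\prod_{l=1}^{k}H_{i_l}$ and (for definiteness) $\widetilde H:=\prod_{l=1}^{k}\widetilde{H_{i_l}}$, a smooth alteration of $H$; then $\operatorname{Alb}_{\widetilde H/K}=\prod_{l}\operatorname{Alb}_{\widetilde{H_{i_l}}/K}$, and $(a_l)_l\mapsto\sum_l\operatorname{alb}_{i_l}(a_l)$ is a $K$-homomorphism $\operatorname{Alb}_{\widetilde H/K}\to A$ whose image is $\sum_l A_{i_l}=A$; being a homomorphism of abelian varieties onto $A$, it is surjective, which is the map \eqref{E:albanesetheorem}.

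The main obstacle is the descent to $K$ effected by Lemma~\ref{L:DiffDesc}: a priori only $\xi_{i,\bar K}$ exists, and it is precisely the hypothesis that $\phi$ is \emph{Galois-equivariant}, fed through the difference-map formalism, that forces $\xi_i$, and hence $\operatorname{alb}_i$ and $A_i$, to be defined over $K$; without equivariance one only recovers the classical statement over $\bar K$. The remaining points needing care are routine by comparison: the positive-characteristic subtleties in constructing the universal cycle $Z_i$ and in replacing a singular or non-reduced $H_i$ by a smooth variety (absorbed by passing to alterations — this is why $\widetilde H$, and not $H$, appears in the conclusion), and the bookkeeping when the components $H_i$ or the alterations $\widetilde{H_i}$ are not geometrically irreducible, which the Galois-equivariant framework of \cite[\S4]{ACMVdcg} is designed to handle.
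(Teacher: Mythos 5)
Your argument reaches the same conclusion by a genuinely different route. The paper does \emph{not} invoke the general fact that $\A^{n+1}(X_{\bar K})$ is generated by differences along Hilbert-scheme universal families. Instead it exploits the hypothesis that $\phi$ is a \emph{surjective regular} homomorphism to invoke \cite[Lem.~4.9(d)]{ACMVdcg} and produce a single $K$-rational cycle $Z\in\chow^{n+1}(A\times_K X)$ for which $\phi\circ w_{Z}$ is $r\cdot\operatorname{Id}_A$ for some positive integer $r$. Restricting $Z$ to a geometrically integral Bertini curve $C\subset A$ through the origin (chosen so that $J_{C/K}\twoheadrightarrow A$), discarding components to achieve flatness, and decomposing into integral components then carries out --- for this one well-chosen cycle --- exactly the passage to Hilbert-scheme families that you wish to invoke in full generality. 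Surjectivity of the map to $A$ drops out immediately from the factor $r\cdot\operatorname{Id}_A$ and the surjection $J_{C/K}\twoheadrightarrow A$, so no dimension-maximality argument is required, and no generation statement needs to be proved. Your maximality trick for extracting a finite generating set of abelian subvarieties from the infinite set of Hilbert components is clean and correct, and your use of Lemma~\ref{L:DiffDesc} is essentially identical to the paper's; what each approach buys is that yours applies the Hilbert-scheme decomposition globally while the paper localizes it to a single cycle chosen precisely so that the final surjectivity is forced.

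The one place where your write-up is genuinely thinner than a complete proof is the ``classical fact'' you flag: that $\A^{n+1}(X_{\bar K})$ is generated, as a group, by differences $(Z_i)_{t_1}-(Z_i)_{t_0}$ along universal families pulled back from components of $\operatorname{Hilb}^{n+1}_{X/K}$ via alterations, with $t_0,t_1$ in a common geometric component. This is true, but its justification is not formal: one must reduce the parameter space to a smooth connected curve, arrange flatness of each integral component of the support (shrinking and rechaining so that the two marked points remain in the flat locus), convert flat integral subschemes into maps to Hilbert components, and keep track of geometric connectedness of the $K$-components so that the differences actually land in $\A^{n+1}$. The paper executes precisely this decomposition, but only for the one cycle $Z$ from \cite[Lem.~4.9(d)]{ACMVdcg}, so the general statement is never needed. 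Finally, your passage from the skew cocycle $\xi_i$ on $\widetilde{H_i}\times_K\widetilde{H_i}$ to a $K$-homomorphism $\operatorname{Alb}_{\widetilde{H_i}/K}\to A$ via a ``base-point-free'' Albanese property is valid, though slightly more delicate than you suggest when $\widetilde{H_i}$ has no $K$-point or is not geometrically connected; the paper instead factors through the Albanese torsor $\operatorname{Alb}^1_{\widetilde{H}/K}$ and then uses a short $\pic^0$-comparison to convert a surjection from a torsor into a surjection from the corresponding abelian variety, which avoids any choice of base point over $K$.
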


 \begin{proof}
  Let  $Z$ be the cycle on $A\times_ K X$ from \cite[Lem.~4.9(d)]{ACMVdcg}  so
  that the composition
  $$
  \begin{CD}
  A(\bar K)@>w_{\bar Z}>> \operatorname{A}^{n+1}(X_{\bar K})@>{\phi}>>A(\bar K)
  \end{CD}
  $$
  is induced by the $K$-morphism 
  $
  r\cdot \operatorname{Id}:A\to A
  $
  for some positive integer $r$.  
  
  Now using  Bertini's theorem,  let  $C$ be a smooth projective curve that is a
  linear section of 
  $A$ passing through the origin (so it has a $K$-point),  and such that the
  inclusion $C\hookrightarrow A$ induces a surjective  morphism
  $J_{C/K}\twoheadrightarrow A$.  Denote again by $Z$ the refined Gysin 
  restriction of the
  cycle $Z$ to $C$.  We have  a commutative diagram
  \begin{equation}\label{E:HilbThm1}
  \xymatrix@R=1em{
   C(\bar K) \ar[d] \ar@{^(->}[r]& A(\bar K)\ar[r]^<>(0.5){w_{\bar Z}}  
   \ar@{->>}@/_2pc/[rr]_{r\cdot \operatorname{Id}}& \operatorname{A}^{n+1}(X_{\bar
    K}) \ar@{->>}[r]^{{\phi}} & A(\bar K)\\
   J_{C/K}(\bar K)  \ar@{->>}[ru]&
  }
  \end{equation}
  
  Discarding extra components, we may assume that $Z$ is flat
  over $C$.   
  Write $Z=\sum_{j=1}^mV^{(j)}-\sum_{j=m+1}^{m'}V^{(j)}$, where 
  $V^{(1)},\dots,V^{(m')}$ are (not necessarily distinct) 
   integral components of the support of $Z$, which
  by assumption are flat over $C$.  
  Let $\operatorname{Hilb}^{(j)}_{X/K}$ be the component of the Hilbert scheme,
  with universal subscheme $U^{(j)}\subseteq
  \operatorname{Hilb}^{(j)}_{X/K}\times_K X$ such that $V^{(j)}$ is obtained by
  pull-back via a morphism $f^{(j)}:C\to \operatorname{Hilb}^{(j)}_{X/K}$.  Let
  $H=\prod_{j=1}^{m'}\operatorname{Hilb}^{(j)}_{X/K}$, and let $U_H:=\sum_{j=1}^m
  \operatorname{pr}_j^*U^{(j)}-\sum_{j=m+1}^{m'}\operatorname{pr}_j^*U^{(j)}$, where $\operatorname{pr}_j : H \to \operatorname{Hilb}^{(j)}_{X/K}$ is the natural projection. 
  There is an induced morphism $f:C\to H$ and we have $Z=f^*U_H$\,; the pull-back
  is defined since all the cycles are flat over the base.    
  
  Now let $\nu:\widetilde H\to H$ be a smooth alteration of  $H$ and let
  $\widetilde U=\nu^*U_H$.  Let $\mu:\widetilde C\to C$ be an alteration such that
  there is a commutative diagram
  $$
  \xymatrix{
   \widetilde C \ar[r]^{\tilde f}  \ar[d]_\mu&  \ar[d]_\nu \widetilde H\\
   C \ar[r]^f& H.
  }
  $$
  Let $\widetilde Z=\mu^*Z$.   
  We obtain maps  
  $$
  \xymatrix{
   (\widetilde C\times_{ K} \widetilde C)(\bar K) \ar@{^(->}[r]& (\widetilde
   H\times_{K}\widetilde H)(\bar K) \ar[r]^{y_{\widetilde U_{\bar K}}} &
   \operatorname{A}^{n+1}(X_{\bar K}) \ar@{->>}[r]^\phi& A(\bar K). 
  }
  $$
  By Lemma \ref{L:DiffDesc},
  these descend to $K$-morphisms
  $$
  \xymatrix{
   \widetilde C\times_{ K} \widetilde C\ar@{^(->}[r]& \widetilde
   H\times_{K}\widetilde H \ar[r]^<>(0.5){{\xi}_{\widetilde U}} 
   & A. 
  }
  $$
Recall that if $W/K$ is any variety, then there exist an abelian
variety $\alb_{W/K}$ and a torsor $\alb^1_{W/K}$ under $\alb_{W/K}$,
equipped with a morphism $W \to \alb^1_{W/K}$ which is universal for
morphisms from $W$ to abelian torsors.
  Taking Albanese torsors we obtain a commutative diagram
  $$
  \xymatrix@R=1em{
   \widetilde C\times_{ K} \widetilde C\ar@{^(->}[r]  \ar[d] \ar@/_5pc/[ddd]&
   \widetilde H\times_{K}\widetilde H \ar[r]^<>(0.5){{\xi}_{\widetilde
     U}}   \ar[d]& A \ar@{=}[ddd]\\
   \operatorname{Alb}^1_{\widetilde C/K}\times_{ K} \operatorname{Alb}^1_{\widetilde
    C/K} \ar@{->}[r] \ar@{->>}[d] & \operatorname{Alb}^1_{\widetilde
    H/K}\times_{K}\operatorname{Alb}^1_{\widetilde H/K} \ar[ru]   & \\
   J_{ C/K}\times_{ K} J_{C/K}\ar@{->>}[rrd]& &\\
   C\times_{ K} C \ar[u] \ar@{->}[rr]_<>(0.5){{\xi}_{Z}}&    & A.\\   
  }
  $$
  The surjectivity of the map $J_{ C/K}\times_{ K} J_{C/K}\to A$  follows from
  \eqref{E:HilbThm1}.  A diagram chase then shows that the map
  $\operatorname{Alb}^1_{\widetilde H/K}\times_{ K} \operatorname{Alb}^1_{\widetilde
    H/K}\to A$ is surjective.  In general, if $T$ is a torsor under an
  abelian variety $B/K$, and if $T \twoheadrightarrow  A'$ is a surjection to an abelian variety, then there is
  a surjection $B \to A'$ over $K$.  (Indeed, the surjection $T \twoheadrightarrow  A'$
  induces an inclusion $\pic^0_{A'/K} \hookrightarrow \pic^0
_{T/K}$\,; but
  $\pic^0_{A'/K}$ is isogenous to $A'$, while $\pic^0_{T/K}$ is isogenous to $B$.)
 Applying this to the surjection   $\operatorname{Alb}^1_{\widetilde H/K}\times_{ K} \operatorname{Alb}^1_{\widetilde
    H/K}\twoheadrightarrow A$, we obtain the surjection   $\operatorname{Alb}_{\widetilde H/K}\times_{ K} \operatorname{Alb}_{\widetilde
    H/K}\to A$.
Theorem \ref{T:MazQ1} now follows, where the $\til H$ in
  \eqref{E:albanesetheorem} is the product $\til H \times_K \til H$
  considered here.
  \end{proof}

 We now use  Theorem \ref{T:MazQ1} to prove Theorem \ref{Ta:MazQ1}.
 
 \begin{proof}[Proof of Theorem \ref{Ta:MazQ1}] 
 	Recall the fundamental result of Griffiths \cite[p.~826]{griffiths68} asserting that  the Abel--Jacobi map  ${AJ}:\operatorname{A}^{n+1}(X_{\cx})
 	\longrightarrow J_a^{2n+1}(X_\cx)$ is a surjective regular homomorphism. By Theorem \ref{T:main} and its proof, $J_a^{2n+1}(X_\cx)$ descends uniquely to an abelian variety $J/K$ such that the surjective regular homomorphism $\LKtrace{AJ}:\operatorname{A}^{n+1}(X_{\bar K}) \longrightarrow J_{\bar K}$ defined in the proof of Lemma \ref{L:T-G-AC} is Galois-equivariant.
 	Now employ Theorem \ref{T:MazQ1}.
 \end{proof}

 \begin{proof}[Proof of Corollary \ref{Ca:MazQ1uni}]
  A uniruled threefold has Chow group of zero-cycles supported on a surface.  A
  decomposition of the diagonal argument \cite{BlSr83} shows that the threefold has geometric
  coniveau $1$ in degree $3$.  
 \end{proof}
 
 Theorem \ref{T:MazQ1} also gives the following result for   algebraic
 representatives.

 \begin{cor}\label{C:ChowAlgRep}
  Let $X$ be a smooth projective variety over a perfect field $K$, let
  $\Omega/\bar  K$ be an algebraically closed field extension, with either
  $\Omega=\bar K$ or $\operatorname{char}(K)=0$,  and let $n$ be a nonnegative 
  integer.    
  Assume  there is an algebraic representative
  $\phi^{n+1}_{\Omega}:\operatorname{A}^{n+1}(X_{\Omega})\to
  \operatorname{Ab}^{n+1}(X_{\Omega})(\Omega)$ (e.g., $n=0$, $1$, or $\dim X-1$).

  Then the abelian variety $\operatorname{Ab}^{n+1}(X_{ \Omega})$ descends to an
  abelian variety $\underline{\operatorname{Ab}}^{n+1}(X_{\bar K})$ over $K$, and 
  there are a finite number of  irreducible components of the   Hilbert scheme
  $\operatorname{Hilb}^{n+1}_{X/K}$ parameterizing codimension $n+1$ subschemes
  of $X/K$,  so that by taking a  finite product $H$ of these components, and
  then denoting by 
  $\operatorname{Alb}_{\widetilde H/K}$  the Albanese of  a  smooth
  alteration $\widetilde H$ of  $H$,  there is a
  surjective  morphism 
  $
  \operatorname{Alb}_{\widetilde H/K}\twoheadrightarrow
  \underline{\operatorname{Ab}}^{n+1}(X_{\bar K})
  $
  of abelian varieties over $K$.
 \end{cor}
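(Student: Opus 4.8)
The plan is to deduce the corollary directly from Theorem \ref{T:MazQ1}, once the algebraic representative has been descended to $K$ as a \emph{surjective} Galois-equivariant regular homomorphism. Write $A:=\operatorname{Ab}^{n+1}(X_\Omega)$ and $\phi:=\phi^{n+1}_\Omega$. First I would note that $\phi$ is automatically surjective: by Proposition \ref{P:algcycles} every class in $\operatorname{A}^{n+1}(X_\Omega)$ has the form $Z_t-Z_0$ for some abelian variety $B/\Omega$, some $Z\in\operatorname{CH}^{n+1}(B\times_\Omega X_\Omega)$ and some $t\in B(\Omega)$, so $\operatorname{Im}(\phi)$ is contained in the abelian subvariety $B'\subseteq A$ generated by the images of the morphisms $\psi_Z$ associated with such pairs $(B,Z)$; the universal property of the algebraic representative then provides a section of the inclusion $B'\hookrightarrow A$, whence $B'=A$.

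Next I would descend $A$ and $\phi$ from $\Omega$ to $\bar K$. If $\Omega=\bar K$ there is nothing to do. If $\operatorname{char}(K)=0$, then Lemma \ref{L:T-G-AC} — equivalently, the characteristic-zero base-change statement for algebraic representatives in \cite[Thm.~3.7]{ACMVdcg} — shows that the $\Omega/\bar K$-trace $\LKtrace{A}$ is an abelian variety over $\bar K$ with $\LKtrace{A}_\Omega\cong A$, and that $\phi$ descends to a surjective regular homomorphism $\operatorname{A}^{n+1}(X_{\bar K})\twoheadrightarrow\LKtrace{A}(\bar K)$ which is the algebraic representative of $X_{\bar K}$. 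In either case one is reduced to the algebraic representative $\phi\colon\operatorname{A}^{n+1}(X_{\bar K})\twoheadrightarrow\operatorname{Ab}^{n+1}(X_{\bar K})(\bar K)$ over $\bar K$.

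Then I would invoke the Galois-descent formalism of \cite[\S4]{ACMVdcg}: uniqueness of the algebraic representative over $\bar K$ equips $\operatorname{Ab}^{n+1}(X_{\bar K})$ with a $\operatorname{Gal}(K)$-action which is compatible, via $\phi$, with the natural $\operatorname{Gal}(K)$-action on $\operatorname{A}^{n+1}(X_{\bar K})$; as $K$ is perfect, Galois descent for abelian varieties makes this action effective, yielding a model $\underline{\operatorname{Ab}}^{n+1}(X_{\bar K})$ over $K$ and a surjective, $\operatorname{Gal}(K)$-equivariant regular homomorphism $\phi\colon\operatorname{A}^{n+1}(X_{\bar K})\twoheadrightarrow\underline{\operatorname{Ab}}^{n+1}(X_{\bar K})(\bar K)$. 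Applying Theorem \ref{T:MazQ1} to this $\phi$ produces finitely many components of $\operatorname{Hilb}^{n+1}_{X/K}$, their product $H$, a smooth alteration $\widetilde H$ of $H$, and the asserted surjection $\operatorname{Alb}_{\widetilde H/K}\twoheadrightarrow\underline{\operatorname{Ab}}^{n+1}(X_{\bar K})$ of abelian varieties over $K$.

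I do not expect a genuine obstacle in the corollary itself — the substantive content is already packaged in Theorem \ref{T:MazQ1}, and the rest is assembly of results from \cite{ACMVdcg}. The one point requiring care is the behaviour of the trace in positive characteristic: Lemma \ref{L:T-G-AC} only yields a purely inseparable isogeny $\LKtrace{A}_\Omega\to A$ in general, so when $\operatorname{char}(K)>0$ one cannot descend $A$ itself from a strictly larger $\Omega$; this is precisely why the hypothesis imposes $\Omega=\bar K$ in that case, which makes the $\Omega\to\bar K$ step vacuous and keeps the descent clean.
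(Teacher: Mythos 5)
Your argument is correct and takes essentially the same route as the paper: descend $\operatorname{Ab}^{n+1}(X_\Omega)$ to $\bar K$ via the trace (cf.\ \cite[Thm.~3.7]{ACMVdcg}), then to $K$ via the Galois-descent formalism of \cite[\S4]{ACMVdcg} (the paper cites Thm.~4.4 there), and finally apply Theorem~\ref{T:MazQ1} to the resulting surjective Galois-equivariant regular homomorphism. The only extra content in your write-up is the explicit verification that the algebraic representative is surjective---a standard fact which the paper leaves implicit---and the (correct) remark that in positive characteristic the trace is only a purely inseparable isogeny, which is exactly why the hypothesis imposes $\Omega=\bar K$ in that case.
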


 \begin{proof}
  The fact that $\operatorname{Ab}^{n+1}(X_{ \Omega})$ descends to $\bar K$ to
  give $\operatorname{Ab}^{n+1}(X_{\bar K})$ is \cite[Thm.~3.7]{ACMVdcg}.   It is
  then shown in \cite[Thm.~4.4]{ACMVdcg} that  $\operatorname{Ab}^{n+1}(X_ {\bar
   K})$ descends to an abelian variety over $K$ and that the map $\phi^{n+1}_{\bar
   K}:\operatorname{A}^{n+1}(X_{\bar K })\to
  \operatorname{Ab}^{n+1}(X_{\bar K})(\bar K)$ is
  $\operatorname{Gal}(K)$-equivariant. Therefore,  we may employ Theorem
  \ref{T:MazQ1} to conclude. 
 \end{proof}

 \appendix

 \section{Cohomology of Jacobians of curves via Abel maps}\label{A:Appendix}

 Let $C$ be a smooth projective curve over a field $K$ with separable
 closure $\bar K$. 
  For any $n$ invertible
 in $K$, the Kummer sequence of \'etale sheaves on $C$\,:
 \[
 \xymatrix{
  1 \ar[r] & \mmu_n \ar[r] & \gp_m \ar[r]^{[n]} & \gp_m \ar[r] & 1
 }
 \]
 gives an isomorphism
 \[
 H^1(\bar C,\mmu_n) \iso\pic_{C/K}[n] = \pic^\circ_{C/K}[n],
 \]
 where we have written $\bar C$ for $C_{\bar K}$.
 After taking the inverse limit over all powers of a fixed prime $n=\ell$,
 we obtain isomorphisms of $\gal(K)$-representations
 \[
 H^1(\bar C, \integ_\ell(1)) \iso T_\ell \pic^\circ_{C/K} \iso (T_\ell \widehat{\pic^\circ_{C/K}})^\vee(1) \iso H^1(\widehat{\pic^\circ_{C/K,\bar K}},\integ_\ell(1)).
 \]
After twisting by $-1$, the canonical
 (principal) polarization on the Jacobian gives an isomorphism
 \begin{equation}
 \label{E:H1isom}
 H^1(\bar C,\integ_\ell) \iso H^1(\pic^\circ_{\bar C/\bar K},
 \integ_\ell).
 \end{equation}
 In this appendix we show that, up to tensoring with $\rat_\ell$,  the
 isomorphism \eqref{E:H1isom} is induced by a $K$-morphism
 $C \ra \pic^\circ_{C/K}$.

 \begin{pro}
  \label{P:curveCoh}
  Let $C$ be a smooth projective curve over a field $K$.
  Then there exists a morphism $\beta:C \to \pic^\circ_{C/K}$ over $K$ which
  induces an isomorphism
  \[
  \xymatrix{
   \beta^*:H^1(\pic^\circ_{\bar C/\bar K},\integ_\ell) \ar[r]^<>(0.5)\sim &
   H^1(\bar C,\integ_\ell)
  }
  \]
  of $\gal(K)$-representations for all but finitely many $\ell$.  For
  all $\ell$ invertible in $K$, we have  that the pull-back
  $\beta^*:H^1(\pic^\circ_{\bar C/\bar K},\rat_\ell) \to H^1(\bar C,\rat_\ell)$ is
  an isomorphism.
 \end{pro}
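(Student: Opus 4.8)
The plan is to realize $\beta$ as a ``degree-$d$ Abel map'', where $d\ge 1$ is chosen so that $C$ carries a line bundle of degree $d$ defined over $K$. We may assume $C$ is geometrically connected, hence geometrically integral since $C$ is smooth (the general case reduces to this one). As $C$ is a variety over $K$ it has a closed point $P$; put $L=\calo_C(P)$ and $d=[\kappa(P):K]\ge 1$, so that $L_{\bar K}$ has degree $d$. On $C\times_K C$ consider $\mathcal M:=\calo_{C\times_K C}(d\Delta)\otimes\mathrm{pr}_2^*L^{-1}$, where $\Delta\subset C\times_K C$ is the diagonal and $\mathrm{pr}_2$ is the second projection. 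Its restriction to $\{c\}\times C$ is $\calo_C(dc)\otimes L^{-1}$, which has degree $0$ and is therefore algebraically trivial, so $\mathcal M$ has relative degree $0$ over the first factor. By the universal property of the Picard scheme, $\mathcal M$ then defines a $K$-morphism $\beta\colon C\to\pic^\circ_{C/K}$ given on points by $c\mapsto\calo_C(dc)\otimes L^{-1}$.

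Next I would base change to $\bar K$ and compare $\beta$ with an ordinary Abel map. Fix $c_0\in C(\bar K)$ and let $a\colon C_{\bar K}\to\pic^\circ_{\bar C/\bar K}$, $c\mapsto[\calo(c-c_0)]$, be the usual Abel map. In the group $\pic^\circ_{\bar C/\bar K}(\bar K)$ one has $\beta_{\bar K}(c)=[\calo(dc-P)]=d\cdot a(c)+t$, where $t=[\calo(dc_0-P)]$ is a constant point; hence $\beta_{\bar K}=\tau_t\circ[d]\circ a$, with $[d]$ multiplication by $d$ on the Jacobian and $\tau_t$ translation by $t$. Since translations act trivially on the cohomology of an abelian variety and $[d]^*$ acts by $d$ on $H^1$, it follows that $\beta_{\bar K}^*=d\cdot a^*$ on $H^1(\pic^\circ_{\bar C/\bar K},\integ_\ell)$. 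It then remains to show that $a^*$ is an isomorphism on $H^1(-,\integ_\ell)$ for every $\ell$ invertible in $K$. After a Tate twist, the Kummer sequence identifies $H^1(-,\integ_\ell(1))$ with $T_\ell$ of the Picard variety (using that the N\'eron--Severi group of an abelian variety is torsion-free, in the case of $J:=\pic^\circ_{\bar C/\bar K}$), and under these identifications $a^*$ on $H^1(-,\integ_\ell(1))$ becomes $T_\ell$ of the pullback homomorphism $\pic^\circ_{J/\bar K}\to\pic^\circ_{\bar C/\bar K}=J$. By the autoduality of Jacobians this homomorphism is, up to sign, the inverse of the canonical principal polarization $J\to\pic^\circ_{J/\bar K}$, hence an isomorphism of abelian varieties; it therefore induces an isomorphism on $T_\ell$, and untwisting shows $a^*$ is an isomorphism on $H^1(-,\integ_\ell)$.

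Combining these steps, $\beta_{\bar K}^*=d\cdot a^*$ is an isomorphism on $H^1(-,\integ_\ell)$ precisely when $\ell$ is invertible in $K$ and $\ell\nmid d$, that is, for all but finitely many $\ell$, and it is an isomorphism on $H^1(-,\rat_\ell)$ for every $\ell$ invertible in $K$; the $\gal(K)$-equivariance of $\beta^*$ is automatic because $\beta$ is defined over $K$ and pullback on \'etale cohomology is Galois equivariant. I expect the only genuinely delicate point to be the identification of $a^*$ with the (inverse) canonical polarization in the $\ell$-adic setting over an arbitrary, possibly positive-characteristic, base field — where autoduality of the Jacobian enters and one must track Weil pairings and twists — whereas the passage from the case where $C$ has a $K$-rational point to the general case is exactly what introduces the factor $d$, and hence the loss of finitely many primes.
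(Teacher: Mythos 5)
Your argument for the geometrically integral case is correct and matches, in substance, the paper's Lemma~\ref{L:appspecialcase}: choose a degree-$d$ line bundle $L$ over $K$, build a morphism $C\to\pic^\circ_{C/K}$ out of a degree-$d$ Abel map, observe that over $\bar K$ it factors as (translation)$\circ[d]\circ$(pointed Abel map), and use that the pointed Abel map is a cohomological isomorphism by autoduality of the Jacobian. The factor $d$ is exactly what the paper also pays.

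The gap is the opening sentence ``We may assume $C$ is geometrically connected, hence geometrically integral\ldots (the general case reduces to this one).'' In this paper a \emph{curve} is only assumed geometrically reduced of pure dimension one; taking coproducts reduces you to $C$ \emph{irreducible over $K$}, but not to $C$ \emph{geometrically} irreducible, and that further reduction is not automatic. Concretely, if $\bar C=\bigsqcup_{[\sigma]}\bar D^\sigma$ has $e>1$ components, then for a closed point $P$ of $C$ the divisor $\bar P$ is spread over all $e$ components, while $d\Delta$ restricted to $\{c\}\times\bar C$ is supported only on the component containing $c$. Thus $\mathcal M=\calo(d\Delta)\otimes \mathrm{pr}_2^*L^{-1}$ restricted to $\{c\}\times\bar C$ has total degree $0$ but \emph{nonzero} degree on each component, so its class does not lie in $\pic^\circ$, and your $\beta$ is not even a morphism to $\pic^\circ_{C/K}$. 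Moreover, the map you would want induced on $H^1$ cannot be an isomorphism component by component: $H^1(\bar D^\sigma,\integ_\ell)$ would receive contributions from only one factor of $H^1(\pic^\circ_{\bar C/\bar K},\integ_\ell)\cong\bigoplus_\tau H^1(\pic^\circ_{\bar D^\tau},\integ_\ell)$ under a naive Abel map. The paper spends the entire latter half of the appendix (the construction of $\pic^{\Delta_d}_{C/K}$, $S^{\Delta_d}(C)$, the maps $s$ and $t$, and Lemma~\ref{L:sona}) precisely to build a $K$-rational morphism that feeds each $\bar D^\sigma$ into \emph{all} factors simultaneously, so that the pullback on $H^1$ is an isomorphism. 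Sketching a reduction via Weil restriction ($\pic^\circ_{C/K}\cong\operatorname{Res}_{K'/K}\pic^\circ_{C/K'}$, $K'$ the constant field of $C$) is plausible but would itself require an argument comparable in length, so as written this step is missing rather than elementary.
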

 
 The case of an integral curve over an algebraically closed field is
 standard (e.g., \cite[Prop.~9.1,p.113]{milneAV}).  
 The case where $C$ is geometrically irreducible and
 $C(K)$ is nonempty is certainly well-known\,; even if $C$ admits no
 $K$-points, the result follows almost immediately from the case $K=\bar K$\,:
 
 \begin{lem}
  \label{L:appspecialcase}
  If $C/K$ is geometrically irreducible, then Proposition
  \ref{P:curveCoh} holds for $C$.
 \end{lem}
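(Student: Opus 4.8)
The plan is to construct the morphism $\beta$ explicitly out of a closed point of $C$, and then to deduce the statement about $\beta^*$ from the classical computation over an algebraically closed field.

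First I would record the algebraically closed case: if $C$ has genus $g$ and $P\in C(\bar K)$, then the Abel map $\bar\beta_P\colon \bar C\to \pic^\circ_{\bar C/\bar K}$, $Q\mapsto \mathcal{O}_{\bar C}(Q-P)$, induces an isomorphism $\bar\beta_P^*\colon H^1(\pic^\circ_{\bar C/\bar K},\integ_\ell)\xrightarrow{\ \sim\ }H^1(\bar C,\integ_\ell)$ for every $\ell$ invertible in $K$ (when $g=0$ both sides vanish); this is \cite[Prop.~9.1, p.~113]{milneAV}. If in addition $C(K)\neq\varnothing$, the lemma is immediate: for $P\in C(K)$ the Abel map $\beta_P$ is the $K$-morphism $C\to\pic^\circ_{C/K}$ classified by the line bundle $\mathcal{O}_{C\times_K C}(\Delta-C\times\{P\})$ on $C\times_K C$, regarded as a family over its first factor, so $\beta_P^*$ is automatically $\gal(K)$-equivariant and is an isomorphism by the previous sentence.

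For the general geometrically irreducible case I would not assume a rational point. The curve $C$ has a closed point $x$; put $d:=[\kappa(x):K]\geq 1$, so that $\mathcal{O}_C(x)$ is an honest line bundle of degree $d$ on $C$ over $K$. The line bundle $\mathcal{O}_{C\times_K C}(d\Delta)$, regarded as a family over the first factor, has relative degree $d$, hence classifies a $K$-morphism $C\to\pic^d_{C/K}$, $Q\mapsto\mathcal{O}_C(dQ)$; composing it with the $K$-isomorphism $\pic^d_{C/K}\xrightarrow{\ \sim\ }\pic^\circ_{C/K}$ given by $L\mapsto L\otimes\mathcal{O}_C(-x)$ (translation in the group scheme $\pic_{C/K}$ by the $K$-point $[\mathcal{O}_C(-x)]$) yields a morphism of $K$-schemes
\[
\beta\colon C\longrightarrow\pic^\circ_{C/K},\qquad Q\longmapsto\mathcal{O}_C(dQ-x).
\]
To identify $\beta^*$ on $H^1$, I base change to $\bar K$ and factor, for an arbitrary chosen $P\in\bar C(\bar K)$, $\bar\beta=t_c\circ[d]\circ\bar\beta_P$, where $[d]$ is multiplication by $d$ on $\pic^\circ_{\bar C/\bar K}$ and $t_c$ is translation by the class $c=\mathcal{O}_{\bar C}(dP-x)\in\pic^\circ_{\bar C/\bar K}(\bar K)$ (here $x$ now stands for its pullback divisor, of degree $d$, so $\deg c=0$). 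Since translations act trivially on $H^1$ and $[d]^*$ acts on $H^1(\pic^\circ_{\bar C/\bar K},\integ_\ell)$ as multiplication by $d$, one gets $\beta^*=d\cdot\bar\beta_P^*$. Together with the first paragraph this shows that $\beta^*\colon H^1(\pic^\circ_{\bar C/\bar K},\integ_\ell)\to H^1(\bar C,\integ_\ell)$ is an isomorphism whenever $\ell\nmid d$, i.e.\ for all but finitely many $\ell$, and that $\beta^*\otimes_{\integ_\ell}\rat_\ell$ is an isomorphism for every $\ell$ invertible in $K$, since $d\neq 0$ in $\rat_\ell$. Finally $\beta^*$ is $\gal(K)$-equivariant simply because $\beta$ is a morphism of $K$-schemes.

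The one point that needs care — and the reason the integral assertion is made only away from finitely many primes — is producing $\beta$ over $K$ rather than merely over $\bar K$: a geometrically irreducible curve may have no $K$-rational point, so the base point of the classical Abel map must be replaced by the $K$-rational effective divisor attached to a closed point of degree $d=[\kappa(x):K]$. This substitution is harmless on $H^1$ up to the scalar $d$, which accounts for the finitely many excluded primes (together with $\operatorname{char} K$).
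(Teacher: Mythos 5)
Your proof is correct and follows essentially the same route as the paper's: both construct $\beta$ as the canonical Abel map $C\to\pic^1_{C/K}$, followed by $[d]$, followed by translation by a degree-$d$ line bundle defined over $K$, and both verify over $\bar K$ that $\beta^*$ equals $\bar\beta_P^*$ composed with an isogeny of degree a power of $d$, hence an isomorphism away from $\ell\mid d$. The only cosmetic differences are that you realize the degree-$d$ line bundle as $\mathcal{O}_C(x)$ for a closed point $x$ (the paper simply posits some degree-$d$ line bundle over $K$), and that you make the translation by $c=\mathcal{O}_{\bar C}(dP-x)$ explicit rather than absorbing it by choosing a $d$-th root $M$ of $L$ over $\bar K$.
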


 \begin{proof}
  Let $d$ be a positive integer such that $C$ admits a line bundle
  $ L$
  of degree $d$ over $K$.  Let $\beta$ denote the composition
  \[
  \xymatrix@C=3.5em{
   \beta:C \ar[r]^<>(0.5)a & \pic^1_{C/K} \ar[r]^{[d] = (-)^{\tensor
     d}}_{\text{isogeny}} & \pic^d_{C/K} \ar[r]^{(-)\tensor
    { L}^\vee}_{\sim} & \pic^\circ_{C/K},
  }
  \]
  where $\pic^e_{C/K}$ denotes  the 
  torsor under  $\pic^\circ_{C/K}$ consisting of degree $e$ line bundles on $C/K$,
  and 
  $a$ is the Abel map (e.g., \cite[Def.~9.4.6, Rem.~9.3.9]{kleimanPIC}).  
  
  We claim that if $\ell \nmid d$, then $\beta^*: H^1(\pic^\circ_{\bar C/\bar
   K},\integ_\ell) \to H^1(\bar C,\integ_\ell)$ is an
  isomorphism.  After passage to $\bar K$, we may find a line bundle
  ${M}$
  such that ${ M}^{\tensor d} \iso   L$.  We have a commutative diagram
  \[
  \xymatrix{
   \bar C \ar[r]^<>(0.5)a \ar[dr]_{a_{ M}}& \pic^1_{\bar C/\bar K} \ar[r]^{[d]}
   \ar[d]^{(-)\tensor { M}^\vee}&
   \pic^d_{\bar C/\bar K}\ar[d]^{(-)\tensor {  L}^\vee} \\
   & \pic^\circ_{\bar C/\bar K} \ar[r]^{[d]} & \pic^\circ_{\bar C/\bar K}
  }
  \]
  Since the diagonal arrow is the usual Abel--Jacobi embedding of $\bar C$ in its
  Jacobian, where the assertion about pull back of cohomology is well known (e.g.,
  \cite[Prop.~9.1, p.113]{milneAV}),  and the lower horizontal arrow is an isogeny
  of degree $d^{2g(C)}$, the commutativity of the diagram implies   that  $\beta$
  has the asserted properties.
 \end{proof}

 \subsection{Components of the Picard scheme}
 
 Now suppose that $C$ is irreducible but $\bar C$ is reducible.
 Continue to let 
 $\pic^\circ_{\bar C / \bar K}$ denote the connected component of
 identity of the Picard scheme, and for each $d$ let $\pic^d_{\bar
  C/\bar K}$ be the space of line bundles of total degree $d$.  (This
 has the unfortunate notational side effect that $\pic^\circ_{\bar C/\bar K}$
 does
 not coincide with $\pic^0_{\bar C/\bar K}$, but we will never have
 cause to study the space of line bundles of total degree zero.)  Then
 $\pic^d_{\bar C/\bar K}$ is no longer a
 torsor under $\pic^\circ_{\bar C/\bar K}$, and we need to work slightly
 harder to identify suitable geometrically irreducible, $K$-rational
 components of the Picard scheme of $C$.
 
 Let $\Pi_0(\bar C)$ be the set of irreducible
 components of $\bar C$.  (Since $\bar K$ is separably closed, each
 such component is geometrically irreducible.)  Fix a component $\bar D\in \Pi_0(C_{\bar
  K})$, and let $H\subset \gal(K)$ be its stabilizer.  Since $C$ is
 irreducible, we have
 \[
 \bar C = \bigsqcup_{[\sigma] \in \gal(K)/H} \bar D^\sigma,
 \]
 where viewing $\sigma$ as an automorphism of $\bar C$, we set $\bar D^\sigma
 =\sigma(\bar D)$.  
 Let $e = \#\Pi_0(\bar C)$.  Inside the $de$-th symmetric power
 $S^{de}(C)_{\bar K} = S^{de}(\bar C)$ we identify the irreducible
 component
 \[
 S^{\Delta_d}(\bar C) := \prod_{[\sigma] \in \gal(K)/H} S^d(\bar
 D^\sigma).
 \]
 Since this element of $\Pi_0(S^{de}(\bar C))$ is fixed by
 $\gal(K)$, it descends to $K$ as a geometrically irreducible variety.
 
 Similarly, inside the Picard scheme $\pic_{\bar C/\bar K}$ we single out
 \[
 \pic^{\Delta_d}_{\bar C/\bar K} = \prod_{[\sigma] \in \gal(K)/H}
 \pic^d_{\bar D^\sigma/\bar K}.
 \]
 It is visibly irreducible and, since it is stable under $\gal(K)$, it
 descends to $K$. Note that $\pic^{\Delta_d}_{\bar C/\bar K}$ is a
 $\pic^{\circ}_{\bar C/\bar K}$-torsor.
 
 The $(de)$-th Abel map $S^{de}(C) \to \pic^{de}_{C/K}$ then restricts to a
 morphism
 \[
 \xymatrix{
  S^{\Delta_d}(C) \ar[r]^{a_{\Delta_d}} & \pic^{\Delta_d}_{C/K}
 }
 \]
 of geometrically irreducible varieties over $K$.
 
 One (still) has the canonical Abel map
 \[
 \xymatrix{
  C \ar[r]^<>(0.5)a & \pic^1_{C/K}.
 }
 \]
 Over $\bar K$, the image of the Abel map $a_{\bar K}$ lands in 
 \[
 \pic^\one_{\bar C/\bar K} = \bigsqcup_{[\sigma] \in \gal(K)/H} \left(
 \pic^1_{\bar D^\sigma} \times \prod_{[\tau]\not = [\sigma]} \pic^\circ_{\bar
  D^\tau} \right).
 \]
 Although $\pic^\one_{\bar C/\bar K}$ has $e$ components, $\gal(K)$ acts
 transitively on them, and we have an irreducible variety $\pic^\one_{C/K}$
 over $K$.

 In conclusion, the canonical Abel map induces a morphism
 \[
 \xymatrix{
  C \ar[r]^<>(0.5)a & \pic^\one_{C/K}
 }
 \]
 of irreducible varieties over $K$.
 
 We need two more $K$-rational morphisms\,:
 
 \begin{lem}
  Let $C/K$ be a smooth projective integral curve.  
  Let $s$ be the map
  \[
  \xymatrix@R=.5em{
   \pic^\one_{\bar C/\bar K} \ar[r]^s & \pic^{\Delta_1}_{\bar C/\bar K} \\
   L \ar@{|->}[r] & \bigotimes_{[\sigma] \in \gal(K)/H}
   \sigma^*{ L}.
  }
  \]
  Let $t$ be the map
  \[
  \xymatrix{
   \bar C \ar[r]^<>(0.5)t & S^{\Delta_1}(\bar C)
  }
  \]
  such that, if $P \in \bar D^\tau(K) \subset C(\bar K)$, then the
  components of $t(P)$ are given by
  \[
  t(P)_\sigma = \sigma\tau\inv(P) \in \bar D^\sigma
  \]
  Then $s$ and $t$ descend to morphisms over $K$.
 \end{lem}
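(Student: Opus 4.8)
The plan is to reduce both assertions to the descent criterion for \emph{morphisms}: if $W_1$ and $W_2$ are varieties over $K$ and $g\colon W_{1,\bar K}\to W_{2,\bar K}$ is a morphism of $\bar K$-schemes, then $g$ is the base change of a (unique) $K$-morphism $W_1\to W_2$ precisely when it is $\gal(K)$-equivariant, i.e.\ $\sigma_{W_2}\circ g=g\circ\sigma_{W_1}$ for every $\sigma\in\gal(K)$. This is the functorial counterpart of the criterion for subvarieties used above (e.g.\ \cite[Prop.~6.8]{milneAG}), and for quasi-projective $W_i$ it follows from it by passing to graphs. The source and target of both $s$ and $t$ have already been equipped with $K$-structures in the preceding paragraphs --- namely the $\gal(K)$-stable varieties $\pic^\one_{\bar C/\bar K}$, $\pic^{\Delta_1}_{\bar C/\bar K}$, $\bar C$ and $S^{\Delta_1}(\bar C)$, which descend to $\pic^\one_{C/K}$, $\pic^{\Delta_1}_{C/K}$, $C$ and $S^{\Delta_1}(C)$ --- so it remains to check that $s$ and $t$ are morphisms of $\bar K$-varieties and that they commute with these $\gal(K)$-actions.

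For $t$ I would argue componentwise. The recipe $P\in\bar D^\tau(\bar K)\mapsto(\sigma\tau^{-1}(P))_{[\sigma]\in\gal(K)/H}$ restricts on each $\bar D^\tau$ to the isomorphism $\sigma\tau^{-1}\colon\bar D^\tau\to\bar D^\sigma$ in the $[\sigma]$-th slot, and a short check shows it is independent of the chosen coset representatives; hence $t$ is a $\bar K$-morphism $\bar C\to S^{\Delta_1}(\bar C)=\prod_{[\sigma]}\bar D^\sigma$. For equivariance, fix $\rho\in\gal(K)$ and $P\in\bar D^\tau$. On $S^{\Delta_1}(\bar C)$ the automorphism $\rho$ sends the $[\sigma]$-slot to the $[\rho\sigma]$-slot through $\rho\colon\bar D^\sigma\to\bar D^{\rho\sigma}$, so the $[\sigma]$-slot of $\rho\cdot t(P)$ is $\rho\bigl((\rho^{-1}\sigma)\tau^{-1}(P)\bigr)=\sigma\tau^{-1}(P)$; since $\rho(P)\in\bar D^{\rho\tau}$, the $[\sigma]$-slot of $t(\rho(P))$ is $\sigma(\rho\tau)^{-1}\rho(P)=\sigma\tau^{-1}(P)$. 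The two agree, so $t$ descends.

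For $s$ I would first observe that $L\mapsto\bigotimes_{[\sigma]\in\gal(K)/H}\sigma^*L$ is an algebraic endomorphism of $\pic_{\bar C/\bar K}=\prod_{[\sigma]}\pic_{\bar D^\sigma/\bar K}$: each $\sigma^*$ permutes the factors and acts through twisting isomorphisms between them, each of which is algebraic, while the tensor product is the multiplication law of $\pic_{\bar C/\bar K}$. A degree count --- for a fixed component $\bar D^\tau$ exactly one summand $\sigma^*L$ has degree $1$ on $\bar D^\tau$ and the rest have degree $0$ --- shows that this endomorphism carries $\pic^\one_{\bar C/\bar K}$ into $\pic^{\Delta_1}_{\bar C/\bar K}$, so $s$ is a $\bar K$-morphism. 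Equivariance then amounts to the identity $\rho^*\bigl(\bigotimes_{[\sigma]}\sigma^*L\bigr)=\bigotimes_{[\sigma]}\sigma^*(\rho^*L)$; the essential point is that the product runs over the \emph{entire} coset space $\gal(K)/H$, i.e.\ over the full $\gal(K)$-orbit of $\bar D$, which is exactly what makes the construction insensitive to the choice of representatives and stable under $\gal(K)$, the identity falling out after reindexing the orbit.

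The individual verifications are all of this routine flavour; the step I would expect to demand the most care is purely combinatorial, namely keeping the $\gal(K)$-action on the set $\Pi_0(\bar C)$ of components (with its stabilizer $H$, which need not be normal in $\gal(K)$), on the factors of the Picard and symmetric-power varieties, and on the twisting isomorphisms all mutually consistent, so that the ``norm-type'' recipes defining $s$ and $t$ genuinely land where claimed and are genuinely $\gal(K)$-equivariant.
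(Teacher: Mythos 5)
Your overall strategy --- reduce to the criterion that a $\bar K$-morphism between $K$-varieties descends iff it is $\gal(K)$-equivariant on $\bar K$-points, then verify equivariance for $s$ and $t$ --- is exactly the route the paper takes (the paper's proof is literally the one sentence ``Each is $\gal(K)$-equivariant on $\bar K$-points''). However, I don't believe the step you call ``a short check'' actually goes through, and the gap is concentrated precisely where you flag the need for care: keeping the $\gal(K)/H$-bookkeeping consistent when $H$ is not normal.

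The issue is the well-definedness of the formula $t(P)_{[\sigma]} = \sigma\tau^{-1}(P)$ for $P \in \bar D^{\tau}$. The coset $[\tau]$ is determined by $P$, but the representative $\tau$ is only determined up to right multiplication by $H$. If you replace $\tau$ by $\tau h$ with $h \in H$ you obtain $\sigma h^{-1}\tau^{-1}(P)$ instead of $\sigma\tau^{-1}(P)$. Since $\tau^{-1}(P)$ lies in $\bar D$ and $H$ (which merely stabilizes $\bar D$ as a \emph{subset}, not pointwise --- e.g.\ for $C = \mathbb P^1_K$ every element of $\gal(K)$ lies in $H$ yet acts nontrivially on $\bar D(\bar K)$) generally acts nontrivially on $\bar D$, these two points differ. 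The same difficulty shows up when you change the representative $\sigma$, and it also infects $s$: $(\sigma h)^*L = h^*\sigma^*L \neq \sigma^*L$ in general. ``Running over the full orbit'' does not rescue this, because the ambiguity is \emph{pointwise within a single slot}, not a reshuffling of the slots. Moreover, if one tries to sidestep this by fixing a set of coset representatives once and for all, the map one obtains fails to be $\gal(K)$-equivariant: tracking the cocycle $\rho\sigma_i = \sigma_{\pi(i)}h_i$ ($h_i \in H$) through your equivariance computation produces a discrepancy by $h_{\pi^{-1}(i)}^{-1}h_j$ in each slot, which vanishes only if the $h_i$ are all equal. Your displayed chain of equalities silently uses the formula with $\rho^{-1}\sigma$ and $\rho\tau$ as representatives in one line and with the fixed representatives in another, which is exactly the move that requires the (unproven) independence of choices.

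To be clear, the paper's own proof is equally terse on this point, so the gap is in the source as much as in your expansion of it; but as a blind proof it does not establish the lemma. What would actually fill the hole is either (i) a proof that $H$ acts trivially on $\bar D$ in the relevant situation (which I don't think is true), or (ii) a construction of $s$ and $t$ that does not depend on the choice of representatives --- for instance by exhibiting $S^{\Delta_1}(C)$ as a Weil restriction $\operatorname{Res}_{K'/K}(C/K')$ with $K' = \bar K^H$ and producing the $K$-morphisms by the universal property of restriction of scalars and a norm construction on $\pic$, rather than by the explicit pointwise formulae --- or (iii) a weaker replacement for the lemma that only asserts the \emph{existence} of some $K$-morphisms with the cohomological properties needed in the proof of the surrounding Proposition, which is all that is actually used downstream.
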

 
 \begin{proof}
  Each is $\gal(K)$-equivariant on $\bar K$-points.
 \end{proof}
 
 \subsection{Isomorphisms on cohomology}
 
 \begin{lem}
  \label{L:sona}
  Let $C/K$ be a smooth projective irreducible curve.  Then the
  composition 
  \[
  \xymatrix{
   C \ar[r]^<>(0.5)a & \pic^\one_{C/K} \ar[r]^s & \pic^{\Delta_1}_{C/K}
  }
  \]
  induces an isomorphism of $\gal(K)$-representations
  \[
  H^1(\pic^{\Delta_1}_{\bar C/\bar K}\integ_\ell) \to
  H^1(\bar C,\integ_\ell).
  \]
 \end{lem}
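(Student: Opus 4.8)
The plan is to reduce everything to an isomorphism over $\bar K$ and then to compute componentwise.

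First, $\gal(K)$-equivariance comes for free: the curve $C$, the schemes $\pic^\one_{C/K}$ and $\pic^{\Delta_1}_{C/K}$, the Abel map $a$, and — by the lemma just proved — the map $s$ are all defined over $K$, so $s\circ a$ is a $K$-morphism and the induced pullback on $H^1(-,\integ_\ell)$ commutes with $\gal(K)$. Thus it suffices to prove that
\[
(s\circ a)^*\colon H^1(\pic^{\Delta_1}_{\bar C/\bar K},\integ_\ell)\longrightarrow H^1(\bar C,\integ_\ell)
\]
is an isomorphism of $\integ_\ell$-modules. I would first establish this after $\otimes\,\rat_\ell$ and only afterwards worry about integrality: the auxiliary isogenies that appear (the multiplication-by-$d$ normalizations of the Abel maps, as in Lemma \ref{L:appspecialcase}, and the canonical principal polarizations on the Jacobians) have degrees divisible by only finitely many primes, which accounts for the ``all but finitely many $\ell$'' in Proposition \ref{P:curveCoh}.

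Over $\bar K$ one has $\bar C=\bigsqcup_{[\sigma]\in\gal(K)/H}\bar D^\sigma$ and $\pic^{\Delta_1}_{\bar C/\bar K}=\prod_{[\sigma]}\pic^1_{\bar D^\sigma/\bar K}$, the latter being a torsor under the abelian variety $\pic^\circ_{\bar C/\bar K}=\prod_{[\sigma]}\pic^\circ_{\bar D^\sigma/\bar K}$; hence
\[
H^1(\pic^{\Delta_1}_{\bar C/\bar K},\integ_\ell)=\bigoplus_{[\sigma]}H^1(\pic^\circ_{\bar D^\sigma/\bar K},\integ_\ell),\qquad H^1(\bar C,\integ_\ell)=\bigoplus_{[\tau]}H^1(\bar D^\tau,\integ_\ell).
\]
The heart of the argument is to unwind $s\circ a$ one geometric component at a time: restricting to $\bar D^\tau\hookrightarrow\bar C$ and projecting $\pic^{\Delta_1}_{\bar C/\bar K}$ onto its factor $\pic^1_{\bar D^\rho}$, the definitions of the Abel map and of $s$ identify $\mathrm{pr}_{[\rho]}\circ (s\circ a)|_{\bar D^\tau}$ with the Abel map $\bar D^\rho\to\pic^1_{\bar D^\rho}$ (up to a fixed translation, harmless on $H^1$) precomposed with the $\bar K$-isomorphism $\bar D^\tau\xrightarrow{\ \sim\ }\bar D^\rho$ induced by an appropriate element of $\gal(K)$. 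Plugging these identifications into the decompositions above expresses $(s\circ a)^*$ as a block matrix whose entries are, up to the isomorphism on $H^1$ furnished by the Abel map of a geometrically irreducible curve (Lemma \ref{L:appspecialcase}; see \cite[Prop.~9.1, p.113]{milneAV}) and by isomorphisms of curves, either isomorphisms or zero. One then checks that this block matrix is a generalized permutation matrix — in each row exactly one block is nonzero, and that block is an isomorphism — and therefore invertible, which proves the lemma.

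The step I expect to be the main obstacle is precisely this componentwise computation and the verification that the block matrix is a generalized permutation matrix: one must keep careful track of the combinatorics relating the components $\bar D^\sigma$ of $\bar C$, the factors $\pic^1_{\bar D^\sigma/\bar K}$ of $\pic^{\Delta_1}_{\bar C/\bar K}$, and the $\gal(K)$-translations built into $s$. In particular one must show that, although $\pic^{\Delta_1}_{\bar C/\bar K}$ is a product over \emph{all} the components, the composition $s\circ a$ restricted to a single component is essentially supported on a single Picard factor, so that the block matrix is not merely a matrix with isomorphism entries (which need not be invertible) but an honest generalized permutation matrix; this is where the precise definition of $s$ enters in an essential way.
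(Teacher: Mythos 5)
Your reduction to a block matrix is a reasonable framework, but the crucial combinatorial claim is false, and this breaks the proof. Unwinding the definitions: for $P\in \bar D^\tau$ one has $a(P)=\mathcal{O}_{\bar C}(P)$ and $s(L)=\bigotimes_{[\sigma]}\sigma^*L$, so $(s\circ a)(P)=\mathcal{O}_{\bar C}\bigl(\sum_\sigma\sigma^{-1}(P)\bigr)$. Restricting to $\bar D^\rho$ picks out the unique $\sigma$ with $\sigma^{-1}(\bar D^\tau)=\bar D^\rho$, and the result is $\mathcal{O}_{\bar D^\rho}(\sigma^{-1}(P))$. In other words, $\operatorname{pr}_\rho\circ(s\circ a)|_{\bar D^\tau}$ is the Abel map of $\bar D^\rho$ precomposed with a Galois isomorphism $\bar D^\tau\to\bar D^\rho$ --- and this holds for \emph{every} $\rho$, not just for a single one. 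So the block matrix of $(s\circ a)^*$ has \emph{all} entries nonzero (each an isomorphism); it is not a generalized permutation matrix, and ``one nonzero block per row'' never happens for $e>1$. As you yourself note, a matrix with all entries isomorphisms need not be invertible, so at this point your argument stalls. You flagged this step as ``the main obstacle,'' and indeed it is: the obstacle is not merely bookkeeping but that the asserted structure simply isn't there.

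The paper sidesteps the matrix computation entirely. It introduces the map $t:\bar C\to S^{\Delta_1}(\bar C)$, $t(P)_\sigma=\sigma\tau^{-1}(P)$ for $P\in\bar D^\tau$, and exhibits a commuting square factoring $s\circ a$ (after translating $\pic^{\Delta_1}$ to $\pic^\circ$ by $\prod_\sigma\mathcal O(-P_\sigma)$) as the composite of $t$ with the product of pointed Abel maps $\prod_\sigma a_{P_\sigma}:S^{\Delta_1}(\bar C)\to\pic^\circ_{\bar C/\bar K}$. It then shows $t^*$ and $(\prod a_{P_\sigma})^*$ are each isomorphisms on $H^1(-,\mathbb Z_\ell)$, using respectively that $\operatorname{pr}_\tau\circ t|_{\bar D^\tau}=\mathrm{id}$ together with the K\"unneth decomposition, and the classical fact about Abel maps of irreducible curves. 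This is a genuinely different route and it is the one you would need. One further, smaller correction: the ``all but finitely many $\ell$'' caveat of Proposition~\ref{P:curveCoh} is caused solely by the multiplication-by-$d$ isogeny applied \emph{after} $s\circ a$; Lemma~\ref{L:sona} itself asserts, and the paper proves, an integral isomorphism for all $\ell$ invertible in $K$, so there is no need to first argue with $\mathbb Q_\ell$-coefficients here.
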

 
 \begin{proof} 
  It suffices to analyze $s \circ a$ after base change to $\bar K$.
  Choose a base point $P_\sigma \in \bar D^\sigma$ for each irreducible
  component of $\bar C$.  We have a commutative diagram
  \[
  \xymatrix{
   \bar C \ar[r]^a \ar[d]^t & \pic^{\one}_{\bar C/\bar K} \ar[r]^s &
   \pic^{\Delta_1}_{\bar C/\bar K} \ar[d]^{\prod_{[\sigma]} (-)\tensor
    {\mathcal O}(-P_\sigma)} \\
   S^{\Delta_1}(\bar C) \ar[rru]^{a_{\Delta_1}}
   \ar[rr]_{\prod_{[\sigma]}a_{P_\sigma}}
   & &\pic^\circ_{\bar C/\bar K},
  }
  \]
  where the bottom arrow is the product of Abel maps associated to the points
  $P_\sigma$.  
  Since the right-most vertical arrow is an isomorphism of schemes, it
  suffices to verify that $t$ and $\prod a_{P_\sigma}$ induce
  isomorphisms on first cohomology groups.  On one hand, since
  cohomology takes coproducts to products, we have $H^1(\bar C,\integ_\ell) \iso
  \prod_\sigma H^1(\bar D^\sigma,\integ_\ell)$.
  On the other hand, since each $\bar D^\sigma$
  is connected, the K\"unneth formula implies that
  $H^1(S^{\Delta_1}(\bar C), \integ_\ell) =H^1(\prod_\sigma \bar D^\sigma,
  \integ_\ell) \iso \oplus_\sigma 
  \operatorname{pr}_\sigma^* H^1(\bar D^{\sigma}, \integ_\ell)$.  Since the
  composition 
  $
  \xymatrix{
   \bar D^\tau \ar[r]^<>(0.5)t & \prod_\sigma \bar D^\sigma
   \ar[r]^<>(0.5){\operatorname{pr}_\tau} & \bar D^\tau
  }
  $
  is the identity,
  \[
  \xymatrix{
   H^1(S^{\Delta_1}(\bar C), \integ_\ell) \ar[r]^<>(0.5){t^*} &
   H^1(\bar C, \integ_\ell)
  }
  \]
  is an isomorphism as well.

  Finally, since each Abel--Jacobi map $a_{P_\sigma}$ induces an isomorphism
  $H^1(\pic^\circ_{\bar D^\sigma/\bar K},\integ_\ell) \iso H^1(\bar D^\sigma,
  \integ_\ell)$, their product yields an isomorphism 
  $(\prod_{[\sigma]}a_{P_\sigma})^*:H^1(\pic^\circ_{\bar C/\bar K},\mathbb Z_\ell)
  \to H^1(S^{\Delta_1}(\bar C), \integ_\ell)$.
 \end{proof}
 
 It is now straight-forward to provide a proof of the main result of
 this appendix.

 \begin{proof}[Proof of Proposition \ref{P:curveCoh}]
  Since both the Picard functor and cohomology take coproducts to
  products, we may and do assume that $C$ is irreducible.
  Choose $d$ such that $\pic^{\Delta_d}_{C/K}$ admits a $K$-point $L$.  Let
  $\beta$ be the composition
  \[
  \xymatrix{
   C \ar[r]^<>(0.5)a & \pic^\one_{C/K} \ar[r]^s & \pic^{\Delta_1}_{C/K}
   \ar[r]^{[d]}_{\operatorname{isog.}} & \pic^{\Delta_d}_{C/K} \ar[r]^{(-)\tensor
    L^\vee}_\cong &
   \pic^\circ_{C/K}.
  }
  \]
  By Lemma \ref{L:sona}, $\beta^*: H^1(\pic^\circ_{\bar C/\bar K},\integ_\ell)
  \to H^1(\bar C,\integ_\ell)$ is an isomorphism as long as $\ell
  \nmid d$.
 \end{proof}

 \bibliographystyle{amsalpha}
 \bibliography{DCG}

\def\cprime{$'$}
\providecommand{\bysame}{\leavevmode\hbox to3em{\hrulefill}\thinspace}
\providecommand{\MR}{\relax\ifhmode\unskip\space\fi MR }
\providecommand{\MRhref}[2]{%
  \href{http://www.ams.org/mathscinet-getitem?mr=#1}{#2}
}
\providecommand{\href}[2]{#2}
\begin{thebibliography}{ACMV17b}

\bibitem[ACMV17a]{ACMVdcg}
Jeffrey~D. Achter, Sebastian Casalaina-Martin, and Charles Vial, \emph{On
  descending cohomology geometrically}, Compos. Math. \textbf{153} (2017),
  no.~7, 1446--1478. \MR{3705264}

\bibitem[ACMV17b]{ACMV}
\bysame, \emph{Parameter spaces for algebraic equivalence}, International
  Mathematics Research Notices (2017), rnx178.

\bibitem[And96]{AndreIHES}
Yves Andr{\'e}, \emph{Pour une th\'eorie inconditionnelle des motifs}, Inst.
  Hautes \'Etudes Sci. Publ. Math. (1996), no.~83, 5--49. \MR{1423019}

\bibitem[Bea83]{beauvillefourier}
A.~Beauville, \emph{Quelques remarques sur la transformation de {F}ourier dans
  l'anneau de {C}how d'une vari\'et\'e ab\'elienne}, Algebraic geometry
  ({T}okyo/{K}yoto, 1982), Lecture Notes in Math., vol. 1016, Springer, Berlin,
  1983, pp.~238--260. \MR{726428}

\bibitem[Blo76]{bloch76}
Spencer Bloch, \emph{Some elementary theorems about algebraic cycles on
  {A}belian varieties}, Invent. Math. \textbf{37} (1976), no.~3, 215--228.
  \MR{0429883}

\bibitem[Blo79]{bloch79}
S.~Bloch, \emph{Torsion algebraic cycles and a theorem of {R}oitman},
  Compositio Math. \textbf{39} (1979), no.~1, 107--127. \MR{539002 (80k:14012)}

\bibitem[BS83]{BlSr83}
S.~Bloch and V.~Srinivas, \emph{Remarks on correspondences and algebraic
  cycles}, Amer. J. Math. \textbf{105} (1983), no.~5, 1235--1253. \MR{714776
  (85i:14002)}

\bibitem[CG72]{CG}
C.~Herbert Clemens and Phillip~A. Griffiths, \emph{The intermediate {J}acobian
  of the cubic threefold}, Ann. of Math. (2) \textbf{95} (1972), 281--356.
  \MR{0302652}

\bibitem[Con06]{conradtrace}
Brian Conrad, \emph{Chow's {$K/k$}-image and {$K/k$}-trace, and the
  {L}ang-{N}\'eron theorem}, Enseign. Math. (2) \textbf{52} (2006), no.~1-2,
  37--108. \MR{2255529 (2007e:14068)}

\bibitem[CP16]{charlespoonen}
Fran{\c{c}}ois Charles and Bjorn Poonen, \emph{Bertini irreducibility theorems
  over finite fields}, J. Amer. Math. Soc. \textbf{29} (2016), no.~1, 81--94.
  \MR{3402695}

\bibitem[Del72]{deligneniveau}
Pierre Deligne, \emph{Les intersections compl\`etes de niveau de {H}odge un},
  Invent. Math. \textbf{15} (1972), 237--250. \MR{0301029 (46 \#189)}

\bibitem[Del80]{deligneweil2}
\bysame, \emph{La conjecture de {W}eil. {II}}, Inst. Hautes \'Etudes Sci. Publ.
  Math. (1980), no.~52, 137--252. \MR{601520 (83c:14017)}

\bibitem[EL93]{esnaultlevine}
H\'el\`ene Esnault and Marc Levine, \emph{Surjectivity of cycle maps},
  Ast\'erisque (1993), no.~218, 203--226, Journ\'ees de G\'eom\'etrie
  Alg\'ebrique d'Orsay (Orsay, 1992). \MR{1265315}

\bibitem[Gri68]{griffiths68}
Phillip~A. Griffiths, \emph{Periods of integrals on algebraic manifolds. {II}.
  {L}ocal study of the period mapping}, Amer. J. Math. \textbf{90} (1968),
  805--865. \MR{0233825}

\bibitem[Gri69]{griffiths}
\bysame, \emph{On the periods of certain rational integrals. {I}, {II}}, Ann.
  of Math. (2) 90 (1969), 460-495; ibid. (2) \textbf{90} (1969), 496--541.
  \MR{0260733}

\bibitem[Ill06]{illusiemiscellany}
Luc Illusie, \emph{Miscellany on traces in {$\ell$}-adic cohomology: a survey},
  Jpn. J. Math. \textbf{1} (2006), no.~1, 107--136. \MR{2261063 (2007g:14016)}

\bibitem[Jan15]{jannsen15}
U.~Jannsen, \emph{Rigidity theorems for k- and h-cohomology and other
  functors}, preprint, arXiv:1503.08742 [math.AG], 2015.

\bibitem[Kle68]{kleiman}
S.~L. Kleiman, \emph{Algebraic cycles and the {W}eil conjectures}, Dix
  expos\'es sur la cohomologie des sch\'emas, North-Holland, Amsterdam; Masson,
  Paris, 1968, pp.~359--386. \MR{0292838 (45 \#1920)}

\bibitem[Kle05]{kleimanPIC}
\bysame, \emph{The {P}icard scheme}, Fundamental {A}lgebraic {G}eometry, Math.
  Surveys Monogr., vol. 123, Amer. Math. Soc., Providence, RI, 2005,
  pp.~235--321. \MR{2223410}

\bibitem[Lec86]{lecomte86}
Florence Lecomte, \emph{Rigidit\'e des groupes de {C}how}, Duke Math. J.
  \textbf{53} (1986), no.~2, 405--426. \MR{850543 (88c:14013)}

\bibitem[Maz14]{mazurprobICCM}
B.~Mazur, \emph{Open problems: {D}escending cohomology, geometrically}, Notices
  of the International Congress of Chinese Mathematicians \textbf{2} (2014),
  no.~1, 37 -- 40.

\bibitem[Mil08]{milneAV}
J.~S. Milne, \emph{Abelian varieties (v2.00)}, 2008, Available at
  www.jmilne.org/math/, p.~172.

\bibitem[Mil17]{milneAG}
\bysame, \emph{Algebraic geometry (v6.02)}, 2017, Available at
  www.jmilne.org/math/.

\bibitem[Mur85]{murre83}
J.~P. Murre, \emph{Applications of algebraic {$K$}-theory to the theory of
  algebraic cycles}, Algebraic geometry, {S}itges ({B}arcelona), 1983, Lecture
  Notes in Math., vol. 1124, Springer, Berlin, 1985, pp.~216--261. \MR{805336
  (87a:14006)}

\bibitem[Otw99]{otw}
Anna Otwinowska, \emph{Remarques sur les cycles de petite dimension de
  certaines intersections compl\`etes}, C. R. Acad. Sci. Paris S\'er. I Math.
  \textbf{329} (1999), no.~2, 141--146. \MR{1710511}

\bibitem[Poo04]{poonen}
Bjorn Poonen, \emph{Bertini theorems over finite fields}, Ann. of Math. (2)
  \textbf{160} (2004), no.~3, 1099--1127. \MR{2144974 (2006a:14035)}

\bibitem[Rap72]{rapoport}
M.~Rapoport, \emph{Compl\'ement \`a l'article de {P}. {D}eligne ``{L}a
  conjecture de {W}eil pour les surfaces {$K3$}''}, Invent. Math. \textbf{15}
  (1972), 227--236. \MR{0309943}

\bibitem[Roj80]{roitman80}
A.~A. Rojtman, \emph{The torsion of the group of {$0$}-cycles modulo rational
  equivalence}, Ann. of Math. (2) \textbf{111} (1980), no.~3, 553--569.
  \MR{577137}

\bibitem[Wei54]{weil54}
Andr{\'e} Weil, \emph{Sur les crit\`eres d'\'equivalence en g\'eom\'etrie
  alg\'ebrique}, Math. Ann. \textbf{128} (1954), 95--127. \MR{0065219
  (16,398e)}

\end{thebibliography}
\end{document}